\definecolor{pinegreen}{rgb}{0.0, 0.47, 0.44}
\def\Re{\mathbb{R}}
\def\hat{\widehat}
\def\Re{{\mathbb R}}
\newcommand{\exclude}[1]{}
 \newcommand{\y}{\mathbf{y}}
\renewcommand{\S}{\mathcal{S}}
\def\Re{\mathbb{R}}
\def\hat{\widehat}
\def\Re{{\mathbb R}}
\def\x{\vect{x}}
\DeclareMathOperator{\tr}{tr}
\DeclareMathOperator{\argmax}{argmax}
\newcommand{\subparagraph}{}
\newtheorem{observation}{Observation}
\newcommand*{\qedA}{\hfill\ensuremath{\diamond}}
\title{Beyond Symmetry: Best Submatrix Selection for the Sparse Truncated SVD}
\date{\today}
\titlerunning{Beyond Symmetry: Best Submatrix Selection for the Sparse Truncated SVD}
\author{Yongchun Li \and Weijun Xie}
\institute{
	First Author: Yongchun Li \at
	Affiliation: Virginia Tech, Blacksburg, VA\\
	\email{liyc@vt.edu}
	\and
	Corresponding Author: Weijun Xie \at
Affiliation: Virginia Tech, Blacksburg, VA\\
\email{wxie@vt.edu}
}
\edef\sign{\pgfmathresult}%
\edef\x{\pgfmathresult}%
\edef\t{\pgfmathresult}%
\edef\y{\pgfmathresult}%
\begin{document}
\maketitle

\begin{abstract} 
Truncated singular value decomposition (SVD), also known as the best low-rank matrix approximation  with minimum error measured by a unitarily invariant norm, has been successfully applied to many domains such as biology,  healthcare, and  others, where high-dimensional datasets are prevalent. To extract interpretable information from the high-dimensional data, sparse truncated SVD (SSVD) has been used to select a handful of rows and columns of the original matrix along with the best low-rank approximation. Different from the literature on SSVD focusing on the top singular value or compromising the sparsity for the seek of computational efficiency, this paper presents a novel SSVD formulation that can select the best submatrix precisely up to a given size to maximize its truncated Ky Fan norm. The fact that the proposed SSVD problem is NP-hard motivates us to study effective algorithms with provable performance guarantees. To do so, we first reformulate SSVD as a mixed-integer semidefinite program, which can be solved exactly for small- or medium-sized instances within a  branch-and-cut algorithm framework with closed-form cuts and is extremely useful for evaluating the solution quality of approximation algorithms. We next develop three selection algorithms based on different selection criteria and two searching algorithms, greedy and local search. We prove the approximation ratios for all the approximation algorithms and show that all the ratios are tight when the number of rows or columns of the selected submatrix is no larger than half of the  data matrix, i.e., our derived approximation ratios are unimprovable. Our numerical study demonstrates the high solution quality and computational efficiency of the proposed algorithms. Finally, all our analysis can be extended to row-sparse PCA.
\keywords{Sparse truncated SVD  \and Mixed-integer semidefinite program\and Branch-and-cut algorithm\and Approximation algorithms \and Row-sparse PCA}
\end{abstract}

\newpage
\section{Introduction}
	Singular value decomposition (SVD) is a common tool for 
%dimension reduction 
data analysis in statistics and computer science. Given a data matrix $\bm A \in \Re^{m\times n}$, its SVD is of the form
\begin{align*} 
\text{(SVD)} \quad	\bm A := \sum_{i\in [r]} \sigma_i\bm u_i \bm v_i^{\top},
\end{align*}
where $r$ denotes the rank of $\bm A$, and for each $i \in [r]$, $\sigma_i\in \Re_+$ is the $i$th largest singular value of $\bm A$, $\bm u_i \in \Re^{m}$ and $\bm v_i \in \Re^n$ are the corresponding left- and right-singular vectors, respectively. 
% Note that $||\bm u_i||_2 = ||\bm v_i||_2=1$ for all $i\in [r] $ and $\{\bm u_i\}_{i\in [r]}$ and $\{\bm v_i\}_{i\in [r]}$ are orthogonal vectors separately.
In practice, data scientists often approximate data matrix $\bm A$ by a low-rank matrix for various purposes such as information retrieval, modeling convenience, and complexity reduction (see, e.g., review papers \cite{markovsky2012low,golub1987generalization}). 
According to the generalized Eckart-Young theorem in \cite{golub1987generalization}, the best rank-$k$ approximation of matrix $\bm A$ with $k \le r$ 
that minimizes the approximation error measured by a unitarily invariant norm (e.g., Frobenius, spectral, or nuclear norm), 
is achieved by its ``$k$-truncated SVD'' (i.e., $ \sum_{i\in [k]} \sigma_i \bm u_i \bm v_i^{\top}$).
%the top $k$ SVD, 
%$$\bm{B}^*=\sum_{i\in [k]} \sigma_i(\bm A) \bm u_i \bm v_i^{\top} \in\arg\min_{\bm{B}\in \Re^{m\times n}}\left\{\|\bm A-\bm B\|_F: \text{rank}(\bm{B})=k\right\},$$
%and $\|\bm{B}^*\|_{(k)}=\|\bm{A}^*\|_{(k)}=\sum_{i\in [k]} \sigma_i(\bm A)$, where $\|\cdot\|_{(k)}$ denotes the Ky Fan norm defined as the sum of $k$ largest singular values.
As shown in \cite{watson1993matrix}, the  $k$-truncated SVD of matrix $\bm A$ admits the following equivalent mathematical programming formulation
\begin{align} \label{tsvd}
\|\bm A\|_{(k)}	:=\max_{\begin{subarray}{c}
	\bm U\in \Re^{m\times k}, \bm V \in \Re^{n\times k}
	\end{subarray}}  \left\{\tr(\bm U^{\top} \bm A \bm V): \bm U^{\top} \bm U = \bm V^{\top} \bm V = \bm I_{k}  \right\},
\end{align}
where the Ky Fan $k$-norm $\|\cdot\|_{(k)}$ is defined as the sum of $k$ largest singular values of a matrix.
Particularly, suppose that $(\bm U^*, \bm V^*)$ is an optimal solution to problem \eqref{tsvd} with $\bm u_i^*$ and $\bm v_i^*$ denoting their $i$th columns, respectively, for each $i\in [k]$, then we must have $\bm u_i^* = \bm u_i$, $\bm v_i^* = \bm v_i$, $ (\bm u_i^*)^{\top}\bm A \bm v_i^* = \sigma_i$ for each $i\in [k]$. Thus, this optimal solution to problem \eqref{tsvd} can form the $k$-truncated SVD of matrix $\bm A$ as below
\begin{align}\label{eq_cons}
\text{(Truncated SVD) } \quad \sum_{ i \in [k]} (\bm u_i^*)^{\top}\bm A \bm v_i^*  \bm u_i^* (\bm v_i^*)^{\top}.
\end{align}

\subsection{Model Formulation of Our Sparse Truncated SVD}

Albeit being widely-used 
for data  simplification, denoising, and extraction of a large-scale matrix $\bm{A}$ (i.e., either $m$ or $n$ or both of them are large),  the $k$-truncated SVD, as the  best low-rank approximation,  can have difficulties in handling the following situations: (i) for the high-dimensional data, interpretable statistical estimations often require low-rank and sparse data structures (see, e.g., \cite{yang2014sparse});
(ii) low-rank data approximation also often comes with the extraction of rows and columns \citep{doan2016finding}. For example,
the biclustering of microarray data seeks to identify a subset of  rows (e.g., cancers) and a subset of columns (e.g., genes) of a matrix that are significantly related \citep{yang2016rate, gao2016optimal} and for the defect diagnostics in multistage manufacturing, it is desirable to achieve  a joint selection of crucial row and column variables representing stages and processes \cite{jeong2022two};
and (iii) the vanilla $k$-truncated SVD often involves many rows and columns of a matrix, 
% approximates the matrix in a  non-interpretable way, 
resulting in uninterpretable factors that do not offer insights \citep{park2020multiresolution,witten2009penalized}. 
An intuitive way of resolving these issues is finding a simultaneously sparse and low-rank approximation of  matrix $\bm A$.
%	To resolve this issue,
%a common approach in the literature known as sparse SVD (SSVD) has been introduced by \cite{witten2009penalized} to use $L_1$ norm to approximate the sparsity on the top-$k$ singular vectors. As ellaborated the constrution of the truncated SVD above,  the optimal solutions to problem \eqref{tsvd} correspond to the top-$k$ singular vectors. 
That is, we propose a Sparse truncated SVD (SSVD) formulation
\begin{align} \label{ssvd}
\text{(SSVD)}\quad 	w^*:=\max_{\begin{subarray}{c}
	\bm U\in \Re^{m\times k}, \bm V \in \Re^{n\times k}
	\end{subarray}}  \left\{\tr(\bm U^{\top} \bm A \bm V): \bm U^{\top} \bm U = \bm V^{\top} \bm V = \bm I_{k}, ||\bm U||_0 \le s_1, ||\bm V||_0 \le s_2  \right\},
\end{align}
where for a matrix $\bm X$, we let $||\bm X||_0$ denote the number of non-zero rows,  integers $s_1 \le m$ and $s_2\le n$ denote the largest numbers of non-zero rows of top left- and right- singular matrices, respectively, $k \le \min\{s_1, s_2\}$ denotes the desirable low rank,  and $w^*$ denotes the optimal value of SSVD. 
%Here, we assume that $s_1 \le m/2$ and $s_2\le n/2$ are small integers to enforce the sparsity.

Given an optimal solution to SSVD \eqref{ssvd}, following the construction in \eqref{eq_cons},
we {can} obtain a sparse $k$-truncated SVD whose left- and right- singular matrices are still orthonormal and have at most $s_1$ and $s_2$ non-zero rows, respectively. 
%which essentially attempts to select a subset of rows and columns from the original data matrix for 
Strictly enforcing the sparsity allows one to select at most $s_1$ rows and $s_2$ columns of the original data matrix and explore the inherent patterns as well as the low-rank approximation, and thus can provide a
%tight
neat approximation of the data matrix
and improve the interpretability of the  obtained
truncated SVD.
%	The proposed SSVD \eqref{ssvd} with submatrix selection and low-rank approximation can be applicable to many data-intensive problems such as biclustering \cite{lee2010biclustering,min2016l0} and feature selection \cite{hou2018weak,kumar2013feature}. 
In addition, SSVD \eqref{ssvd} can be viewed as finding the best sparse and low-rank approximation of a data matrix. For many large-scale machine learning and statistical applications in various domains such as biology, manufacturing, healthcare, and others, where high-dimensional datasets are prevalent, it is desirable to pursue simultaneously sparse and low-rank approximation for better interpretable results. For example, by combining the low-rankness and sparsity, the quality of data recovery and reconstruction in signal/image processing can be considerably improved, compared to the methods solely adopting one of them \citep{chen2018simultaneously,niu2021seismic}. Our numerical study on solar flare detection via satellite images further validates the effectiveness of the proposed sparse and low-rank approximation in our SSVD \eqref{ssvd}.

	\noindent\textit{{ Model Interpretation.}} Combining the $k$-truncated SVD \eqref{tsvd} and the SSVD \eqref{ssvd}, we see that the SSVD \eqref{ssvd} can be recast as the following combinatorial optimization problem:
\begin{align} \label{eq_ssvdcom}
\text{(SSVD)} \quad w^*:=	\max_{\begin{subarray}{c}
	S_1\subseteq [m],   
	S_2 \subseteq [n]
	\end{subarray}
} \left\{ ||\bm A_{S_1, S_2}||_{(k)}: |S_1|\le s_1, |S_2|\le s_2\right\},
\end{align}
where for any pair of subsets $S_1 \subseteq [m]$ and $S_2 \subseteq [n]$, 
$\bm A_{S_1, S_2}$ denotes the submatrix of $\bm A$ with rows and columns indexed by $S_1$ and $S_2$, respectively.

%	Thus, for a given data matrix $\bm A$, the objective of SSVD \eqref{eq_ssvdcom} is to select the best submatrix of size at most $s_1 \times s_2$ to maximize the corresponding Ky Fan $k$-norm.
%	
The combinatorial formulation \eqref{eq_ssvdcom} leads to a more intuitive explanation of SSVD from the perspective of submatrix selection, which is remarked below:
(i) for a given data matrix $\bm A$, the objective of SSVD \eqref{eq_ssvdcom} is to select the best submatrix of size at most $s_1 \times s_2$ with the maximum Ky Fan $k$-norm; (ii) The Ky Fan $k$-norm is equivalent to the sum of all the singular values of the $k$-truncated SVD, which is commonly used to measure information contained by the top $k$ singular vectors (see, e.g., \cite{doan2016finding, xia2014optimal}); and (iii) compared to the vanilla $k$-truncated SVD, the resultant sparse rank-$k$ matrix approximation by SSVD \eqref{eq_ssvdcom} 
%	 our obtained sparse rank-$k$ matrix
improves the interpretability while preserving the properties of truncated SVD (e.g., extracting $k$ largest singular values of an optimal submatrix, maintaining the orthonormality of  singular vectors, etc.).

	\noindent\textit{Model Appliance.}
The SSVD \eqref{ssvd} is versatile and generalizes many existing models. Particularly, we will show that
(i) when $s_1=m$ and $s_2=n$, it reduces to the vanilla $k$-truncated SVD \eqref{tsvd} of matrix $\bm A$; (ii) when $k=1$, it reduces to the rank-one SSVD with $L_0$ norm constraints proposed by \cite{min2015novel}; and
(iii) when matrix $\bm A$ is positive semidefinite and $s_1=s_2$, SSVD \eqref{ssvd}  reduces to the well-known row-Sparse PCA (SPCA) in \cite{vu2012minimax, dey2020solving}.  
%The proofs of the generalization results are delegated in {sec:model}.
Formally, SPCA considered in this paper can be defined as below. 
\begin{align}\label{eq_spca}
\text{(SPCA)} \quad w^{spca}:= \max_{ \bm U \in \Re^{n \times k}}  \big\{\tr(\bm U^{\top} \bm A\bm U) :  \bm U^{\top} \bm U= \bm I_k,
||\bm U||_0 \le s \big\},
\end{align}
where matrix $\bm A \in \S_+^n$ is assumed to be a sample covariance matrix and thus is positive semidefinite, $k \le s \le n$ are positive integers, and $w^{spca}$ denotes the optimal value of SPCA. Since SPCA \eqref{eq_spca} is a special case of SSVD \eqref{ssvd}, our developed  algorithms can  be adapted to all the SPCA applications.

	\subsection{Relevant Literature}
\textit{Submatrix Selection Problems.}
%	\textit{Submatrix Selection Problems.} 
As the combinatorial formulation \eqref{eq_ssvdcom} interprets SSVD as a task of selecting an optimal submatrix,  we differ our SSVD \eqref{eq_ssvdcom} from some classic submatrix selection problems in literature in the following two aspects: selected submatrix form and objective function.  Given a data matrix $\bm A \in \Re^{m\times n}$, we select a rectangular submatrix of arbitrary size $s_1 \times s_2$, while the maximum volume \citep{civril2009selecting,deshpande2010efficient} and sparse ridge regression problems \citep{wei2021ideal} focus on one-dimensional subvector
selection (i.e., $s_1 =m$ or $s_2=n$); and SPCA \cite{dey2018convex} and maximum entropy sampling problems \citep{ko1995exact} seek to obtain the best principal submatrix out of a positive semidefinite matrix  $\bm A$. Moreover, different from the previous works, the optimal submatrix produced by our SSVD \eqref{eq_ssvdcom} achieves the maximum Ky Fan $k$-norm, a natural objective for modeling a low-rank matrix.
%Existing ones often focused on different selection objectives including volume,  least-squares cost, natural logarithm of the determinant, and the largest eigenvalue. 

We next review the literature on CUR matrix approximation with pseudo-skeleton approximation as a special case, which aims to show that
a low-rank matrix can be well approximated using its sub-columns and sub-rows \citep{boutsidis2017optimal}. 
Formally, the CUR approximation is made up of the multiplied three matrices, $\bm C \bm U \bm R$, where $\bm C \in \Re^{m\times k}$, $\bm R \in \Re^{k \times n}$ denote the selected columns  and rows of data matrix $\bm A \in \Re^{m\times n}$, respectively, and  $\bm U \in \Re^{k\times k}$ is the kernel matrix. An obvious limitation of the  submatrix selection indicated by the CUR approximation is that the sparsity and low-rankness are not separable, i.e., it fails to guarantee a rank-$k$ approximation when selecting $s_1\ge k$ rows in $\bm C$ and $s_2 \ge k$ columns in $\bm R$, which is quite different from ours.
In fact, a majority of existing research about CUR approximation focused on an  ill-conditioned rank-$k$ matrix $\bm A$ and attempted to select a size $k\times k$ submatrix  \citep{cortinovis2020low,  tyrtyshnikov1995pseudo,zamarashkin1997pseudo} or larger-sized submatrix  \citep{osinsky2018pseudo, mikhalev2018rectangular}. In contrast, our SSVD \eqref{algo:spca_greedy} can be applied to any data matrix and selecting an arbitrary number of rows and columns to achieve a low  rank-$k$ approximation.

Other submatrix selection works have compromised to use easy-to-solve but much less accurate models by relaxing the sparsity constraints by convex ones (e.g., $\ell_1$ norm in \cite{doan2016finding}). Quite differently, our proposed SSVD  \eqref{ssvd} aims to select a prespecified-sized submatrix. As far as we are concerned, our SSVD  \eqref{ssvd} is the first-known rectangular submatrix selection model, maximizing the Ky Fan $k$-norm. Therefore,
the existing submatrix selection algorithms cannot be directly used to solve SSVD \eqref{ssvd}, and none of them admits any theoretical performance guarantees compared to the true optimal value. In the numerical study,  we tailor two commonly-used methods- randomized leverage-score algorithm \citep{mahoney2009cur} and  maximum volume-based algorithm \citep{mikhalev2018rectangular}, to solve SSVD \eqref{ssvd}  and compare them with our proposed ones.

%	Since 
%	
%	Our paper contributes to relevant literature of SSVD from two aspects: a new formulation and effective algorithms.
%	
%We characterize some properties of SSVD \eqref{eq_ssvdcom}  above. 
\textit{SSVD Models and Methods.}
The concept of SSVD dates back to  the pioneering work by \cite{witten2009penalized}, which penalized the non-zero elements of rank-one truncated SVD for improving interpretability.
To better convey the advantage of the proposed SSVD formulation, we next survey the existing formulations on SSVD and explain their limitations compared to ours. 
First, many works have mainly focused on rank-one SSVD  (i.e., $k=1$ in SSVD \eqref{ssvd}) and the  $L_1$ norm constraints or regularization on the top left- and right- singular vector variables $\bm U \in \Re^{m\times 1}$ and $\bm V  \in \Re^{n\times 1}$ to obtain approximately sparse top singular vectors (see \cite{lee2010biclustering,omanovic2018knowledge,min2018group, sill2011robust,witten2009penalized}).
%	many studies  imposed $L_1$-norm constraints on
%	 the top-$1$ singular vectors of the data matrix to maximize the largest singular value, i.e., \yongchun{$k=1$}. 
%	approximate rank-one SSVD. 
Albeit being computationally efficient, using $L_1$-norm may 	not always
guarantee the numbers of non-zero rows and columns in the sparse truncated SVD to be no larger than given thresholds and thus may fail to deliver desirable interpretable results.
%	To distinguish our SSVD \eqref{ssvd} from the special case, we refer to the SSVD of $k=1$ as rank-one SSVD throughout the papre.}
A few works recently developed on rank-one SSVD 
impose the $L_0$ norm constraints on the top  left- and right- singular vector variables $\bm U \in \Re^{m\times 1}$ and $\bm V  \in \Re^{n\times 1}$, restricting the size of selected submatrix exactly to be less than or equal to a given threshold \citep{min2015novel, min2016l0,li2020exact}.

However, the work on the general rank-$k$ SSVD based on $L_0$ norm constraints is scarce.
In \cite{lee2010biclustering,sill2011robust,witten2009penalized}, to construct a sparse and rank-$k$ matrix approximation, they suggested selecting $k$ different $L_1$ norm constrained rank-one SSVD sequentially; namely, at each iteration, the authors proposed to subtract the sum of the obtained sparse rank-one matrices from the original data matrix and apply their rank-one SSVD methods to the resultant matrix. %, and then summing up the selected submatrices their corresponding one-truncated SVDs is used to constitute a rank-$k$ approximation matrix. 
Unfortunately, their selected left- and right- singular vectors do not guarantee the orthogonality nor share the same row sparsity.
%are not be orthogonal to each other nor would they share the same row sparsity. 
Another relevant work is \cite{yang2014sparse}, which proposed  a sparse SVD algorithm to generate $k$ sparse and orthonormal singular vectors for a better statistical estimation; however, it failed to enforce the sparsity of the obtained rank-$k$ matrix strictly. 
Different from these works, this paper studies the general rank-$k$ SSVD \eqref{ssvd} model by strictly enforcing the sparsity and orthonormality of the obtained left- and right- singular vectors simultaneously. %To efficiently solve it, we also design exact and approximation algorithms with performance guarantees.

\textit{Exact and Approximation Algorithms of SSVD.} 
We show that SSVD \eqref{ssvd} is NP-hard with a reduction to the rank-one SPCA,  i.e., $k=1$ in formulation \eqref{eq_spca} that has been notoriously known to be NP-hard and inapproximable (see, e.g., \cite{magdon2017np}), which motivates us to develop efficient exact and approximation algorithms. 
Notably, the rank-one SPCA has been extensively studied in the literature (e.g., see \cite{berk2019certifiably,kim2021convexification,li2020exact}).
 In \cite{li2020exact}, the authors also derived exact MISDP formulations for both rank-one SPCA and rank-one SSVD. 
As far as we are concerned, all the existing exact algorithms and formulations are not directly applicable to SSVD \eqref{ssvd}. To fill the gap, in this paper, we derive the first exact MISDP formulation and the branch-and-cut algorithm for  general rank-$k$ SSVD \eqref{ssvd}.

For the large-scale instances, it is, in general, difficult to solve SSVD \eqref{eq_misdp} or its special case SPCA to optimality within a reasonable amount of time (e.g., within an hour) even when the rank $k=1$ (see, e.g., \cite{berk2019certifiably}). To speedup the computation, another line of research on high-dimensional rank-one SPCA and rank-one SSVD has adopted effective iterative methods or approximation algorithms to find a good-quality feasible solution \citep{chan2016approximability, chowdhury2020approximation,li2020exact, min2015novel, min2016l0}. Unfortunately,  to the best of our knowledge, 
there are not any known approximation algorithms that can directly solve our proposed general rank-$k$ SSVD \eqref{eq_ssvdcom} for a possibly nonsymmetric matrix. Although some algorithms manage to yield feasible solutions for SSVD \eqref{eq_ssvdcom}, they fail to guarantee a provable performance.
Recently, the work \cite{dey2020solving} proposed a modified local search algorithm for the general rank-$k$ SPCA but the authors did not show the approximation ratio and their algorithm cannot be extended to nonsymmetric rank-$k$ SSVD.
This paper proposes new selection algorithms and successfully tailors greedy and local search algorithms to solve SSVD \eqref{ssvd} with provable approximation ratios.
Our first selection algorithm matches the best-known approximation ratio (i.e., $1/\sqrt{\min\{s_1, s_2\}}$) for rank-one SSVD in \cite{li2020exact} and for the rank-$k$ SPCA, as a special case of SSVD \eqref{ssvd}, all the proposed approximation algorithms come with the same or better performance guarantees.
%Quite differently,  this paper focuses on general SSVD \eqref{ssvd} and SPCA will be proven as a special case. All of our proposed approximation algorithms come with performance guarantees. 
%is different from and superior to the existing ones in that \yongchun{it can precisely restrict the number of nonzero entries of all singular vectors, and the obtained sparse truncated SVD matrix can still preserve the properties of vanilla truncated SVD.}
	\subsection{Summary of Main Contributions}
To solve SSVD \eqref{ssvd}, we derive an equivalent MISDP formulation based on the semidefinite representation of Ky Fan $k$-norm and effective approximation algorithms using various selection and searching criteria. 
%	The roadmap of our contributions of SSVD \eqref{ssvd} is shown in {fig1}. 
Below we list the major contributions.

\begin{enumerate}[(i)]
	\item Based on the MISDP,  we derive  a family of valid inequalities and a branch-and-cut algorithm for SSVD \eqref{ssvd},  which can solve the small- and medium-sized instances to optimality (e.g., $m=n=500$, $s_1=s_2=5$, $k=2$) and help evaluate the solution quality of our proposed approximation algorithms;
	
	\item Inspired by the combinatorial formulation  \eqref{eq_ssvdcom} of SSVD,  we consider three different selection criteria that are related to its objective function (i.e., the Ky Fan $k$-norm) yet much easier to compute,  
	%		but are much easier to compute, 
	and then propose three selection algorithms;
	
	\item 
	We successfully customize the well-known greedy and local search algorithms to solve SSVD \eqref{eq_ssvdcom};
	\item The approximation ratios  and time complexities of our proposed approximation algorithms for SSVD \eqref{eq_ssvdcom} are displayed in Table~\ref{table:ratio}. We prove that all the ratios are tight, i.e., un-improvable {when the sparse parameters satisfy $s_1\le m/2$ and $s_2\le n/2$};
	\item We remark that the approximation ratio of selection Algorithm~\ref{algo:trun1} is independent of $k$ and matches the best-known  one of rank-one SPCA in literature; 
	\item All our analyses of exact and approximation algorithms can be extended to the general SPCA \eqref{eq_spca}, where the results of approximation algorithms are displayed in  Table~\ref{table:spcaratio}; and
	\item The numerical study on large-scale instances (e.g., $m=16313, n=2365$) shows the high solution quality and scalability
	%		computational efficiency 
	of our proposed approximation algorithms.
\end{enumerate}

\begin{table}[h] 
	\vskip -0.1in
	\centering
	\caption{Summary of Approximation Algorithms for SSVD \eqref{eq_ssvdcom}}
		\vskip -0.08in 
	\label{table:ratio}
	\begin{threeparttable}
		%  \fontsize{7}{10}\selectfont 
		\setlength{\tabcolsep}{2pt}\renewcommand{\arraystretch}{1.2}
		\begin{tabular}{c| c |c| c| c | c }
			\hline  
			& \multicolumn{3}{c|}{Selection Algorithms} &	\multicolumn{2}{c}{Searching Algorithms } \\
			\hline
			Algorithm  & Algorithm~\ref{algo:trun1} &	Algorithm~\ref{algo:trun2} & Algorithm~\ref{algo:trun3} &	Algorithm~\ref{algo:svd_greedy}& Algorithm~\ref{algo:svd_localsearch}
			\\ 
			\hline
			Ratio &  $ {1}/{\sqrt{\min\{s_1, s_2\}}}$  &  ${1}/{\sqrt{k\min\{s_1, s_2\}}}$ & ${\sqrt{s_1s_2}}/{(k\sqrt{mn})}$ & ${1}/{\sqrt{ks_1s_2}}$ & ${1}/{\sqrt{ks_1s_2}}$  \\
			\hline
			\multirow{2}{*}{Complexity} &  \multirow{2}{*}{NP-hard}   &  $O((m+n)(m\log (m) $ & $O(m\log(m)$ & $O(\max\{s_1, s_2\}$ & $O(L/\delta ks_1s_2$  \\
			&  &  $+n\log(n) + ks_1s_2))$ & $+n\log(n)+mn)$ & $(m+n)ks_1s_2)$ & $(ns_1+ms_2))$  \\
			\hline
		\end{tabular}%
		\vspace{-.1em}
		\begin{tablenotes}
			\item[1] $L =$ encoding length of $\bm A$, and $\delta> 0$ is the strict improvement factor
		\end{tablenotes}  
	\end{threeparttable}
	\vspace{-.8em}
\end{table}
\begin{table}[h] 
	\centering
	\caption{Summary of Approximation Algorithms for SPCA \eqref{eq_spca}}
		\vskip -0.08in 
	\label{table:spcaratio}
	\begin{threeparttable}
		%  \fontsize{7}{10}\selectfont 
		\setlength{\tabcolsep}{2pt}\renewcommand{\arraystretch}{1.2}
		\begin{tabular}{c| c |c| c| c | c }
			\hline  
			& \multicolumn{3}{c|}{Selection Algorithms} &	\multicolumn{2}{c}{Searching Algorithms } \\
			\hline
			Algorithm  & Algorithm~\ref{algo:trun1spca} &	Algorithm~\ref{algo:trun2spca} & Algorithm~\ref{algo:trun3spca} &	Algorithm~\ref{algo:spca_greedy}& Algorithm~\ref{algo:spca_localsearch}
			\\ 
			\hline
			Ratio &  $ {1}/{\sqrt{s}}$  &  ${1}/{\sqrt{ks}}$ & ${s}/{(kn)}$ & $k/s$ & $k/s$  \\
			\hline
			\multirow{1}{*}{Complexity} &  \multirow{1}{*}{NP-hard}   &  $O(n(n\log(n)+ks^2)) $ & $O(n\log(n)+n^2)$ & $O(nks^3)$ & $O(L/\delta nks^3)$  \\
			\hline
		\end{tabular}%
		\begin{tablenotes}
			\item[1] $L =$ encoding length of $\bm A$, and $\delta> 0$ is the strict improvement factor
		\end{tablenotes}  
	\end{threeparttable}
	\vspace{-.8em}
\end{table}

	\noindent\textit{Organization.}
The remainder of  this paper is organized as follows.  Section~\ref{sec:pre} presents important preliminary results. Section~\ref{sec:model} shows equivalent formulations for SSVD.  Section~\ref{sec:algo} describes approximation algorithms of SSVD with provable performance guarantees.  Section~\ref{sec:spca} extends the analysis to SPCA. Section~\ref{sec:bc} introduces the branch-and-cut algorithms for exactly solving SPCA and SSVD.
Section~\ref{sec:data} presents the numerical illustration of our proposed algorithms. Section~\ref{sec:conclusions} concludes the paper.

\noindent\textit{Notation.}
The following notation is used throughout the paper. We use bold lower-case letters
(e.g., $\bm x$) and bold upper-case letters (e.g., $\bm X$) to denote vectors and matrices, respectively, and use the corresponding non-bold letters (e.g., $x_i$, $X_{ij}$) to denote their components. For positive integers $n$ and $s\le n$, we let $[n] :=\{1,2,\cdots, n\}$, let $[s,n]=\{s, s+1, \cdots, n\}$,  let $\S_+^n$ denote set of all the $n\times n$ symmetric positive semidefinite matrices, and let $\bm I_n$ denote the $n \times n$ identity matrix. For any two positive integers $i,j$, we let $\bm 0_{i,j}$ denote a size $i\times j$ all-zeros matrix and let $\bm 0_{i}$ denote a size $i$ all-zeros vector,
and we let $\bm 1_{i,j}$ to denote a size $i\times j$ all-ones matrix and let $\bm 1_{i}$ to denote a size $i$ all-ones vector. For a matrix $\bm X \in \Re^{m\times n}$ with rank $r$, let $\bm X_{:, j}$ denote its $j$th column for $j \in [n]$, let $\bm X_{i, :}$ denote its $i$th row for each $i\in [m]$,  let $\sigma_i(\bm X)$ denote its $i$th largest singular value for each $i\in [r]$, let $||\bm X||_2$, $||\bm X||_*$, and $||\bm X||_{F}$ denote the induced two norm, nuclear, and Frobenius norm, and if $\bm{X}$ is symmetric, we let $\bm X \succeq 0$ denote the positive semi-definiteness of matrix $\bm X$ and let $\lambda_i(\bm X)$ denote its $i$th largest eigenvalue for each $i\in [r]$. For a matrix $\bm X \in \Re^{m \times n}$ and any two subsets $S_1 \subseteq [m]$ and $S_2 \subseteq [n]$, let $\bm X_{S_1, j}$ denote the $j$th column of submatrix $\bm X_{S_1, [n]}$ for all $j\in [n]$, and let $\bm X_{i, S_2}$ denote the $i$th row of submatrix $\bm X_{[m], S_2}$ for all $i\in [m]$.
For a vector $\bm x \in \Re^{ n}$ and any subset $S \subseteq [n]$, we let $||\bm x||_2$ denote the two norm, let $||\bm x||_{\infty}$ denote the infinite norm, and let $\bm x_{S}$ denote the subvector with entries indexed by $S$. Give a real number $x\in \Re$, we let $\text{sign}(x)$ denote its sign.
All vectors and matrices are properly sized so that algebraic operations can be carried through.
Additional notation will be introduced later as needed.

\section{Preliminaries} \label{sec:pre}
%\begin{definition}[SVD]
%	Given a data matrix $\bm B \in \Re^{m\times n}$, its SVD is of the form
%	\begin{align*} 
%	\bm B := \sum_{i\in [r]} \sigma_i(\bm B) \bm u_i \bm v_i^{\top},
%	\end{align*}
%	where $r$ denotes the rank of $\bm B$, and for each $i \in [r]$, $\sigma_i(\bm B)\in \Re_+$ is the $i$th largest singular value, $\bm u_i \in \Re^{m}$ and $\bm v_i \in \Re^n$ are its corresponding left and right singular vectors, respectively.  Note for any $i\in [r]$ and $j\in [r]$,  both $\bm u_i^{\top} \bm u_j$ and $\bm v_i^{\top} \bm v_j $ are equal to one if $i=j$ and zero, otherwise.
%\end{definition}

According to the definition of SVD, we observe that
\begin{observation} \label{obs1}
	For any matrix $\bm B \in \Re^{m\times n}$, suppose that $\bm B = \sum_{i\in [r]} d_i \bm u_i \bm v_i^{\top}$ where $\bm d \in \Re_+^r$ and  both $\{ \bm u_i\}_{i\in [r]}$ and $\{ \bm v_i\}_{i\in [r]}$ are orthogonal (but possibly not orthonormal) vectors, respectively. Then the SVD of matrix $\bm B$ can be represented as
	\begin{align*}
	\bm B: = \sum_{i\in [r]} d_i ||\bm u_i||_2 ||\bm v_i||_2 \frac{\bm u_i }{ ||\bm u_i||_2 }\frac{\bm v_i^{\top}}{||\bm v_i||_2} ,
	\end{align*}
	and $\{d_i ||\bm u_i||_2 ||\bm v_i||_2\}_{i\in [r]}$ are the singular values of matrix $\bm B$, and $\{\bm u_i/ ||\bm u_i||_2\}_{i\in [r]}$ and $\{\bm v_i/ ||\bm v_i||_2\}_{i\in [r]}$ are corresponding left- and right- orthonormal singular vectors.
\end{observation}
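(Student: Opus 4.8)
The plan is to reduce the statement to the defining characterization of the singular value decomposition: if $\bm B = \bm U \bm \Sigma \bm V^{\top}$ with $\bm U, \bm V$ orthogonal and $\bm\Sigma$ a nonnegative rectangular diagonal matrix, then the diagonal entries of $\bm\Sigma$ are the singular values of $\bm B$ and the columns of $\bm U, \bm V$ are corresponding singular vectors. Concretely, I would first normalize. Since orthogonality of the families $\{\bm u_i\}_{i\in[r]}$ and $\{\bm v_i\}_{i\in[r]}$ is understood to mean these are nonzero and pairwise orthogonal, the vectors $\hat{\bm u}_i := \bm u_i/||\bm u_i||_2$ and $\hat{\bm v}_i := \bm v_i/||\bm v_i||_2$ are well defined for $i\in[r]$ and inherit orthonormality, so $\{\hat{\bm u}_i\}_{i\in[r]}$ and $\{\hat{\bm v}_i\}_{i\in[r]}$ are orthonormal systems in $\Re^m$ and $\Re^n$ (in particular $r\le\min\{m,n\}$). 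Setting $\sigma_i := d_i\,||\bm u_i||_2\,||\bm v_i||_2 \ge 0$, a direct substitution into $\bm B=\sum_{i\in[r]} d_i\bm u_i\bm v_i^{\top}$ yields $\bm B = \sum_{i\in[r]}\sigma_i\,\hat{\bm u}_i\hat{\bm v}_i^{\top}$, which is exactly the claimed expression.

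Next I would complete the two orthonormal systems to full orthonormal bases of $\Re^m$ and $\Re^n$ via Gram--Schmidt, assemble them into orthogonal matrices $\bm U=[\hat{\bm u}_1,\dots,\hat{\bm u}_m]$ and $\bm V=[\hat{\bm v}_1,\dots,\hat{\bm v}_n]$, and let $\bm\Sigma\in\Re^{m\times n}$ be the rectangular diagonal matrix with $\Sigma_{ii}=\sigma_i$ for $i\in[r]$ and all other entries zero. Then $\bm U\bm\Sigma\bm V^{\top} = \sum_{i\in[r]}\sigma_i\hat{\bm u}_i\hat{\bm v}_i^{\top} = \bm B$, so $\bm B = \bm U\bm\Sigma\bm V^{\top}$ is a singular value decomposition of $\bm B$. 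To identify the quantities with the singular data of $\bm B$, note that $\bm B^{\top}\bm B = \bm V(\bm\Sigma^{\top}\bm\Sigma)\bm V^{\top}$ with $\bm V$ orthogonal, so the eigenvalues of $\bm B^{\top}\bm B$ are precisely $\{\sigma_i^2\}_{i\in[r]}$ together with zeros; hence, by definition, the singular values of $\bm B$ are $\{\sigma_i\}_{i\in[r]}=\{d_i||\bm u_i||_2||\bm v_i||_2\}_{i\in[r]}$. Moreover each $\hat{\bm v}_i$ is a unit eigenvector of $\bm B^{\top}\bm B$ for eigenvalue $\sigma_i^2$ and $\bm B\hat{\bm v}_i=\sigma_i\hat{\bm u}_i$, which exhibits $\{\hat{\bm u}_i\}_{i\in[r]}$ and $\{\hat{\bm v}_i\}_{i\in[r]}$ as a valid choice of left- and right-singular vectors.

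This argument is essentially bookkeeping; there is no deep step. The only points worth care are the two degenerate situations: when some $d_i=0$ the corresponding $\sigma_i=0$ is a trivial (zero) singular value, not one of the rank-many positive ones, so the statement should be read as listing these quantities among the singular values; and when some $\sigma_i$ coincide the singular vectors are not unique, so the conclusion is that $\{\hat{\bm u}_i\}_{i\in[r]},\{\hat{\bm v}_i\}_{i\in[r]}$ form \emph{a} compatible pair of singular-vector families for $\bm B$ rather than the unique one. Neither subtlety affects the use of Observation~\ref{obs1} in the sequel, since only the singular values and the orthonormality of the normalized vectors are needed.
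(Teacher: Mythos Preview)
Your proposal is correct and follows exactly the intended route: the paper presents this as an immediate consequence of the definition of SVD without giving a detailed argument, and what you have written is precisely the routine verification that normalizing the orthogonal families produces a factorization $\bm B=\bm U\bm\Sigma\bm V^{\top}$ with orthogonal $\bm U,\bm V$ and nonnegative diagonal $\bm\Sigma$. Your remarks on the degenerate cases ($d_i=0$ and repeated $\sigma_i$) are appropriate caveats and do not affect the uses of the observation elsewhere in the paper.
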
 

We use the notion of Sylvester's Hadamard matrix to construct worst-case examples to prove the tightness of the approximation ratios of our approximation algorithms.
\begin{definition}[Sylvester's Hadamard Matrix]
	According to Sylvester's construction \cite{seberry2005some}, for every positive integer $t$, there exists a symmetric $2^t\times 2^t$ matrix whose entries are all equal to $1$ or $-1$ and columns are orthogonal to each other. This is known as Sylvester's Hadamard matrix of order $2^t$, denoted by $\bm H(t)$.
\end{definition}
To be specific,  Sylvester's Hadamard matrix can be constructed sequentially, i.e., 
$\bm H(t+1)=\begin{pmatrix}
\bm H(t) & \bm H(t)\\
\bm H(t)& -\bm H(t)
\end{pmatrix}$ with  $\bm H(1)=[1]$.
For simplicity, we refer to Sylvester's Hadamard matrix as the Hadamard matrix throughout this paper.

\section{Exact MISDP of SSVD} \label{sec:model}
In this section, we develop an exact mixed-integer semidefinite programming (MISDP) formulation for SSVD \eqref{ssvd}. The MISDP relies on the following key lemma that establishes a semidefinite representation of the Ky Fan $k$-norm for a given matrix, motivated by the variational formulation of the top $k$ singular vectors in  \cite{golub2013matrix} and the facts that the largest singular value and  the sum of squares of $k$ largest singular values of a  matrix have been  shown to be semidefinite representable in \cite{doan2016finding,li2020exact}.

\begin{lemma} \label{lem:svd}
	For any matrix $\bm B \in \Re^{m \times n}$ with rank $r$, suppose that $\bm B = \sum_{i\in [r]} \sigma_i(\bm B) \overline{\bm u}_i \overline{\bm v}_i^{\top}$ denotes the SVD of $\bm B$. Then  for a positive integer $k \le r$, 
	\begin{align*}
	\max_{\bm X \in \Re^{n \times m} }  \left\{\tr(\bm B \bm X): ||\bm X||_2 \le 1, ||\bm X||_{*}\le k  \right\} = ||\bm B||_{(k)},
	\end{align*}
	where the equality can be achieved by $\bm X^* =\sum_{ i \in [k]} \overline{\bm v}_i(\overline{\bm u}_i)^{\top} $.
\end{lemma}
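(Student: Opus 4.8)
The plan is to prove the two inequalities separately, with the harder side being the upper bound.

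\textbf{Lower bound ($\ge$).} First I would check that $\bm X^* = \sum_{i\in[k]}\overline{\bm v}_i\overline{\bm u}_i^{\top}$ is feasible and attains value $\|\bm B\|_{(k)}$. Since $\{\overline{\bm u}_i\}_{i\in[r]}$ and $\{\overline{\bm v}_i\}_{i\in[r]}$ are each orthonormal (these are the singular vectors of $\bm B$ and $k\le r$), Observation~\ref{obs1} applied with all weights equal to $1$ shows that $\bm X^*$ has exactly $k$ nonzero singular values, each equal to $1$; hence $\|\bm X^*\|_2=1\le 1$ and $\|\bm X^*\|_{*}=k\le k$, so $\bm X^*$ is feasible. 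Expanding
\[
\tr(\bm B\bm X^*)=\tr\Big(\sum_{i\in[r]}\sigma_i(\bm B)\,\overline{\bm u}_i\overline{\bm v}_i^{\top}\;\sum_{j\in[k]}\overline{\bm v}_j\overline{\bm u}_j^{\top}\Big)
\]
and using that $\overline{\bm v}_i^{\top}\overline{\bm v}_j$ and $\overline{\bm u}_j^{\top}\overline{\bm u}_i$ each equal $1$ when $i=j$ and $0$ otherwise collapses the double sum to $\sum_{i\in[k]}\sigma_i(\bm B)=\|\bm B\|_{(k)}$. This gives ``$\ge$'' and also shows the supremum is attained.

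\textbf{Upper bound ($\le$).} Let $\bm X$ be any feasible point and set $p=\min\{m,n\}$. By von Neumann's trace inequality, $\tr(\bm B\bm X)\le |\tr(\bm B\bm X)|\le \sum_{i\in[p]}\sigma_i(\bm B)\sigma_i(\bm X)$, where singular values are listed in nonincreasing order (padding with zeros as needed). Writing $c_i=\sigma_i(\bm X)$, the constraints $\|\bm X\|_2\le 1$ and $\|\bm X\|_{*}\le k$ become $0\le c_i\le 1$ for all $i$ and $\sum_{i\in[p]}c_i\le k$. Hence $\tr(\bm B\bm X)$ is bounded above by the optimal value of the linear program $\max\{\sum_{i\in[p]}\sigma_i(\bm B)c_i : 0\le c_i\le 1,\ \sum_i c_i\le k\}$; since $\sigma_1(\bm B)\ge\cdots\ge\sigma_p(\bm B)\ge 0$ and $k$ is an integer, a one-line exchange argument shows the optimum is attained at $c_1=\cdots=c_k=1$, $c_{k+1}=\cdots=c_p=0$, with value $\sum_{i\in[k]}\sigma_i(\bm B)=\|\bm B\|_{(k)}$. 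Combining with the lower bound proves the equality, with the maximum achieved at $\bm X^*$.

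The only nontrivial ingredient is von Neumann's trace inequality; everything else is bookkeeping with orthonormality plus the elementary LP argument, so I do not expect a real obstacle. An essentially equivalent alternative is to recognize $\{\bm X:\|\bm X\|_2\le 1,\ \|\bm X\|_{*}\le k\}$ as the unit ball of the norm dual to the Ky Fan $k$-norm, namely $\bm X\mapsto\max\{\|\bm X\|_2,\tfrac1k\|\bm X\|_{*}\}$, so that the lemma is just the statement that the Ky Fan $k$-norm equals its bidual; I still expect the von Neumann route to be the cleanest to write out. A final point worth double-checking is the degenerate bookkeeping when $r<p$ (extra zero singular values of $\bm B$) or when $\bm X$ has rank less than $k$, but neither causes trouble since the LP bound uses only monotonicity and nonnegativity of the $\sigma_i(\bm B)$.
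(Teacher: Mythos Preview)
Your proof is correct, but the upper-bound argument differs from the paper's. You invoke von Neumann's trace inequality to get $\tr(\bm B\bm X)\le\sum_i\sigma_i(\bm B)\sigma_i(\bm X)$ and then solve the resulting LP in the singular values of $\bm X$. The paper instead takes the SVD $\bm X^*=\sum_{i\in[d]}\sigma_i(\bm X^*)\bm p_i\bm q_i^\top$ of an optimal $\bm X^*$, writes $\tr(\bm B\bm X^*)=\sum_{i}\sigma_i(\bm X^*)\,\bm q_i^\top\bm B\bm p_i$ exactly, bounds this sum by $\sum_{i\in[k]}|\bm q_{\delta(i)}^\top\bm B\bm p_{\delta(i)}|$ using $0\le\sigma_i(\bm X^*)\le 1$ and $\sum_i\sigma_i(\bm X^*)\le k$, and then observes that the orthonormal vectors $\bm q_{\delta(i)},\bm p_{\delta(i)}$ are feasible in the variational formulation \eqref{tsvd}, giving $\le\|\bm B\|_{(k)}$. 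Your route is shorter and more standard but relies on an external inequality; the paper's route is self-contained, using only the variational characterization \eqref{tsvd} already introduced, and in effect re-derives the relevant special case of von Neumann's inequality. The lower-bound arguments are essentially identical.
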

\begin{proof}
	To show the identity, let us consider the following maximization problem
	\begin{align} \label{eq3}
	\max_{\bm X \in \Re^{n \times m} }  \left\{\tr(\bm B \bm X): ||\bm X||_2 \le 1, ||\bm X||_{*}\le k  \right\} .
	\end{align}
	
	First, for any feasible solution $(\bm U, \bm V)$  to the maximization problem   \eqref{tsvd}, we can construct a feasible solution $\bm X= \bm V \bm U^{\top}$ to problem \eqref{eq3} that yields the same objective value. Thus, the optimal value of problem \eqref{eq3} is lower bounded by $||\bm B||_{(k)}$.
	
	On the other hand,  suppose that an optimal solution $\bm X^*$ of problem \eqref{eq3} has rank $d\geq k$ and  its SVD is $\bm X^*  = \sum_{i\in [d]} \sigma_i(\bm X^*) \bm p_i \bm  q_i^{\top}$. As $\|\bm X^*\|_2\leq 1$, we have $\sigma_i(\bm X^*)\leq 1$ for each $i\in [d]$. Let $\delta$ denote a permutation of $[d]$ such that $|\bm q_{\delta(1)}^{\top}\bm B \bm p_{\delta(1)} |\ge \cdots \ge |\bm q_{\delta(d)}^{\top}\bm B \bm p_ {\delta(d)}|$. Thus, the optimal value of problem \eqref{eq3} is equal to 
	\begin{align*}
	\tr(\bm B\bm X^*)=\sum_{i\in [d]}\sigma_{\delta(i)} (\bm X^*)  \bm q_{\delta(i)}^{\top}\bm B \bm p_{\delta(i)} \le \sum_{i\in [k]}|\bm q_{\delta(i)}^{\top}\bm B \bm p_{\delta(i)}| \le ||\bm B||_{(k)},
	\end{align*}
	%	\yongchun{what is $w_2$?}
	where the first inequality is from the fact that $0\leq \sigma_i(\bm X^*) \le 1$ for all $i\in [d]$ and $\sum_{i\in [d]} \sigma_i(\bm X^*) \leq k$ and the second one is because $\bm{U}=[\bm q_{\delta(1)}, \cdots, \bm q_{\delta(k)}] $ and $\bm{V}=[\text{sign}(\bm p_{\delta(1)}^{\top}\bm B \bm q_{\delta(1)})\bm p_{\delta(1)}, \cdots,$ $\text{sign}(\bm p_{\delta(k)}^{\top}\bm B \bm q_{\delta(k)})\bm p_{\delta(k)}] $ is feasible to the maximization problem  \eqref{tsvd} and yields  objective value $\sum_{i\in [k]}|\bm q_{\delta(i)}^{\top}\bm B \bm p_{\delta(i)}|$.
	
	Thus, problem \eqref{eq3} has optimal value $||\bm B||_{(k)}$ with an optimal solution $\bm X^*= \sum_{ i \in [k]} \overline{\bm v}_i(\overline{\bm u}_i)^{\top} $. \qed
\end{proof}

	Introducing binary variables to represent the set variable in SSVD \eqref{eq_ssvdcom} and using Lemma~\ref{lem:svd} to represent the objective function in SSVD \eqref{eq_ssvdcom}, we arrive at the exact MISDP \eqref{eq_misdp} in the following.

\begin{restatable}{theorem}{thmsdp}\label{thm_sdp}
	The SSVD \eqref{ssvd} can be formulated as 
	\begin{align}\label{eq_misdp}
	&\text{\rm (SSVD)} \quad  w^*:=\max_{
		\begin{subarray}{c}
		\bm z \in \{0,1\}^{m},
		\bm y \in \{0,1\}^n,
		\bm X \in \Re^{n \times m}
		\end{subarray}
	} \bigg\{\tr(\bm A \bm X): ||\bm X||_2 \le 1,    
	||\bm X||_{*} \le k, \notag \\ 
	&||\bm X_{:, i}||_{\infty} \le z_i, \forall i \in [m], \sum_{i \in [m]} z_i \le s_1,  
	||\bm X_{j,:}||_{\infty} \le y_j, \forall j \in [n], \sum_{j\in [n]} y_j \le s_2 \bigg\}.
	\end{align}
\end{restatable}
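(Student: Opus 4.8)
The plan is to derive \eqref{eq_misdp} from the combinatorial reformulation \eqref{eq_ssvdcom} by (a) encoding the subset variables $S_1,S_2$ through binary indicator vectors, and (b) replacing the inner objective $\|\bm A_{S_1,S_2}\|_{(k)}$ by its semidefinite representation from Lemma~\ref{lem:svd}. To a point $\bm z\in\{0,1\}^m$ I associate the set $S_1=\{i\in[m]:z_i=1\}$ and to $\bm y\in\{0,1\}^n$ the set $S_2=\{j\in[n]:y_j=1\}$, so the cardinality bounds $|S_1|\le s_1$, $|S_2|\le s_2$ turn into $\sum_i z_i\le s_1$ and $\sum_j y_j\le s_2$. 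The linking constraints $\|\bm X_{:,i}\|_{\infty}\le z_i$ and $\|\bm X_{j,:}\|_{\infty}\le y_j$ play a double role: if $z_i=0$ (resp. $y_j=0$) they force the $i$th column (resp. $j$th row) of $\bm X$ to vanish, so that $\bm X$ is supported on the block $S_2\times S_1$; and if $z_i=1$ (resp. $y_j=1$) they are automatically implied by $\|\bm X\|_2\le1$, since $\|\bm X_{:,i}\|_{\infty}\le\|\bm X_{:,i}\|_2\le\|\bm X\|_2$, hence they impose nothing extra there.

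For the direction ``value of \eqref{eq_misdp} $\le w^*$'': take any feasible $(\bm z,\bm y,\bm X)$, let $S_1,S_2$ be the associated subsets, and let $\bm X'\in\Re^{|S_2|\times|S_1|}$ be the block of $\bm X$ indexed by rows $S_2$ and columns $S_1$. Because $\bm X$ vanishes off the block $S_2\times S_1$, a direct index computation gives $\tr(\bm A\bm X)=\tr(\bm A_{S_1,S_2}\bm X')$, and since deleting zero rows and columns changes neither the singular values nor their sums, $\|\bm X'\|_2=\|\bm X\|_2\le1$ and $\|\bm X'\|_*=\|\bm X\|_*\le k$. Thus $\bm X'$ is feasible to the maximization problem of Lemma~\ref{lem:svd} with $\bm B=\bm A_{S_1,S_2}$, so $\tr(\bm A\bm X)=\tr(\bm A_{S_1,S_2}\bm X')\le\|\bm A_{S_1,S_2}\|_{(k)}\le w^*$, the last step by \eqref{eq_ssvdcom} and $|S_1|\le s_1$, $|S_2|\le s_2$. (If $\min\{|S_1|,|S_2|\}<k$ one uses that the identity of Lemma~\ref{lem:svd} still holds, e.g. by von Neumann's trace inequality, reading $\|\cdot\|_{(k)}$ as the sum of the $k$ largest singular values with zero padding.) Maximizing over feasible $(\bm z,\bm y,\bm X)$ gives the inequality.

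For the reverse direction, let $(S_1^*,S_2^*)$ attain the maximum in \eqref{eq_ssvdcom}. Using monotonicity of the Ky Fan $k$-norm under appending rows or columns --- if $\bm B'$ is obtained from $\bm B$ by adjoining a row $\bm a^{\top}$ then $\bm B'^{\top}\bm B'=\bm B^{\top}\bm B+\bm a\bm a^{\top}\succeq\bm B^{\top}\bm B$, whence $\sigma_i(\bm B')\ge\sigma_i(\bm B)$ for all $i$ --- we may assume $|S_1^*|=s_1$ and $|S_2^*|=s_2$, so in particular $k\le\min\{|S_1^*|,|S_2^*|\}$ and Lemma~\ref{lem:svd} applies directly to $\bm B=\bm A_{S_1^*,S_2^*}$. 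Let $\bm X'$ attain that maximum, embed it into $\bm X\in\Re^{n\times m}$ by placing $\bm X'$ in the block $S_2^*\times S_1^*$ and zeros elsewhere, and set $z_i=1$ for $i\in S_1^*$, $z_i=0$ otherwise, and analogously for $\bm y$. Then $(\bm z,\bm y,\bm X)$ satisfies all constraints of \eqref{eq_misdp} (cardinality by construction; $\|\bm X\|_2\le1$, $\|\bm X\|_*\le k$ by zero-padding invariance; the $\infty$-norm constraints since $\bm X$ lives on $S_2^*\times S_1^*$ and $\|\bm X_{:,i}\|_{\infty}\le\|\bm X\|_2\le1$), with objective $\tr(\bm A\bm X)=\tr(\bm A_{S_1^*,S_2^*}\bm X')=\|\bm A_{S_1^*,S_2^*}\|_{(k)}=w^*$. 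Combining the two inequalities proves \eqref{eq_misdp}.

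I expect the one genuinely delicate point --- the \emph{main obstacle} --- to be the bookkeeping that ties the ``global'' variable $\bm X\in\Re^{n\times m}$ to the ``local'' submatrix problem: verifying that the $\infty$-norm constraints encode exactly ``supported on the block $S_2\times S_1$'', and that this identification is compatible with the three identities $\tr(\bm A\bm X)=\tr(\bm A_{S_1,S_2}\bm X')$, $\|\bm X\|_2=\|\bm X'\|_2$, and $\|\bm X\|_*=\|\bm X'\|_*$. Everything else is a direct invocation of \eqref{eq_ssvdcom} and Lemma~\ref{lem:svd}; a secondary point worth one sentence is the monotonicity reduction to $|S_1^*|=s_1$, $|S_2^*|=s_2$, which is what guarantees $k\le\min\{|S_1^*|,|S_2^*|\}$ so that Lemma~\ref{lem:svd} can be applied verbatim.
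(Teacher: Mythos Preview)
Your proof is correct and follows essentially the same route as the paper: both reduce to the combinatorial form \eqref{eq_ssvdcom}, encode the subsets via binary indicators, use the $\ell_\infty$ linking constraints to force $\bm X$ to be supported on the block $S_2\times S_1$, and then invoke Lemma~\ref{lem:svd} on that block (with zero-padding invariance of the spectral and nuclear norms). You are in fact slightly more careful than the paper about the edge case where $k$ may exceed the rank of the selected submatrix---handled via your monotonicity enlargement to $|S_1^*|=s_1$, $|S_2^*|=s_2$ and the von Neumann remark---which the paper's proof simply glosses over.
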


\begin{proof}
	We prove the equivalence of SSVD \eqref{eq_ssvdcom} and problem \eqref{eq_misdp} via the one-to-one solution correspondence.
	
	First, for any feasible solution $(\bm z, \bm y,\bm{X})$ to problem \eqref{eq_misdp}, let us denote by $S_1$ and $S_2$ the supports of $\bm z$ and $\bm y$, separately. Thus, $(S_1, S_2)$ is feasible to  SSVD \eqref{eq_ssvdcom} with the objective value $||\bm A_{S_1, S_2} ||_{(k)}$. We will show that its corresponding objective value of problem \eqref{eq_misdp} is also equal to $||\bm A_{S_1, S_2} ||_{(k)}$ below.
	
	According to the constraints in problem \eqref{eq_misdp}, we can derive that
	$X_{ij} = 0$, for all $([n]\times [m]) \setminus (S_2\times S_1)$. Thus, problem \eqref{eq_misdp} can reduce to the following maximization problem 
	\begin{align}\label{eq_part_ii}
	\max_{\bm X \in \Re^{|S_2| \times |S_1|}} \left\{\tr(\bm A_{S_1, S_2} \bm X_{S_2, S_1}):  ||\bm X_{S_2, S_1}||_2 \le 1, ||\bm X_{S_2, S_1}||_* \le k \right\},
	\end{align}
	which has the optimal value $||\bm A_{S_1, S_2} ||_{(k)}$ by Lemma~\ref{lem:svd}.
	
	Second, for any feasible solution $(S_1, S_2)$ to SSVD \eqref{eq_ssvdcom}, we let $z_i = 1$ if $i\in S_1$ and zero, otherwise for each $i\in [m]$ and  let $y_j = 1$ if $j \in S_2$ and zero, otherwise for each $j\in [n]$. Next, according to  Lemma~\ref{lem:svd}, let us denote $\bm X_{S_2, S_1}$ to be an optimal solution to problem \eqref{eq_part_ii} with the objective value equal to the Ky Fan $k$-norm of submatrix $\bm A_{S_1, S_2}$. We augment the matrix $\bm X_{S_2, S_1}$ to be of size $m\times n$ by filling zero entries. Thus, the constructed solution $(\bm z, \bm y,\bm{X})$ is feasible to problem \eqref{eq_misdp} with the same objective value $||\bm A_{S_1, S_2} ||_{(k)}$. 
	
	In conclusion, problem \eqref{eq_misdp} is an exact formulation of SSVD \eqref{eq_ssvdcom}. \qed
\end{proof}

We make the following remarks about the result of Theorem~\ref{thm_sdp}: (i) since both nuclear norm and induced two-norm are positive semidefinite representable \citep{ben2011lectures}, SSVD \eqref{eq_misdp} can be easily recast as an MISDP; (ii) Theorem~\ref{thm_sdp} provides an MISDP formulation for SSVD \eqref{ssvd}, which can be directly solved by off-the-shelf solvers such as YALMIP, Pajarito, and SCIP-SDP \citep{gally2018framework}; (iii) based on the MISDP \eqref{eq_misdp}, we can design an exact branch-and-cut algorithm for SSVD in Section~\ref{sec:bc} and in our numerical study, it works well for small- and medium-sized instances (e.g., it can solve the case of $m=n=500, s_1=s_2=5, k=2$ within one minute); and (iv) as an important corollary, we derive two equivalent MISDPs for SPCA \eqref{eq_spca}.

	\section{Approximation Algorithms of SSVD} \label{sec:algo}
The results from the previous section are useful to  solve small- or medium-sized instances to optimality; however, the exact algorithm might not be scalable to large-scale instances (see our numerical study section for more details). In this section, we develop and analyze three different selection algorithms and two searching algorithms of SSVD \eqref{eq_ssvdcom} with theoretical guarantees for solving the large-scale  instances to near optimality.

\subsection{Selection Algorithms }
We observe that the difficulty of the exact formulations arises from the fact that the representation of Ky Fan $k$-norm in SSVD \eqref{eq_ssvdcom} is either a nonconvex program or a difficult convex SDP. 
This motivates us to explore more effective selection criteria to replace the Ky Fan $k$-norm such that the obtained submatrix by alternative criteria is still a high-quality solution to SSVD \eqref{eq_ssvdcom}. 
Therefore, we design three different selection criteria that are related to the Ky Fan-$k$ norm but are much easier to compute, which correspond to three selection algorithms for SSVD  \eqref{eq_ssvdcom}. All  selection algorithms come with approximation ratios proven to be tight when $s_1\le m/2$ and $s_2\le n/2$.

\noindent\textbf{Selection Algorithm~\ref{algo:trun1}.} In the first algorithm, we select a submatrix of size at most $s_1 \times s_2$ that maximizes the Frobenius norm.
% To apply the truncation algorithm to the general SSVD \eqref{eq_ssvdcom}, how to obtain the orthogonal $k$ vectors such that  $\bm U^{\top}\bm U =\bm I_k$ and $\bm V^{\top}\bm V=\bm I_k$ is the key difficulty. 
% In this subsection,  motivated by the combinatorial formulation of SSVD \eqref{eq_ssvdcom}, we construct a simpler combinatorial optimization problem instead whose the optimal solution can be viewed as a feasible solution of SSVD \eqref{eq_ssvdcom}. 
The detailed implementation can be found in Algorithm~\ref{algo:trun1}. We prove a $1/\sqrt{\min\{s_1, s_2\}}$-approximation ratio  of Algorithm~\ref{algo:trun1} in Theorem~\ref{thms1}, and this ratio is unimprovable. Note that both the selection procedure and approximation ratio of Algorithm~\ref{algo:trun1} is independent of the parameter $k$.

\begin{algorithm}[htb]
	\caption{Selection Algorithm I for SSVD \eqref{eq_ssvdcom}}
	\label{algo:trun1}
	\begin{algorithmic}[1]
		\State  {\bfseries Input:} A matrix $\bm A \in \Re^{m \times n}$,  integers   $s_1\in [m]$, $s_2\in [n]$
		\State  Suppose $(\hat{S}_1, \hat{S}_2)$ is an optimal solution to the problem below
		\begin{align} \label{eq_alter}
		\ \max_{
			\begin{subarray}{c}
			S_1\subseteq[m],  S_2\subseteq [n]
			\end{subarray}
		} \left\{||\bm A_{S_1, S_2}||_F: |S_1| \le s_1,|S_2| \le s_2\right\}.
		\end{align}
		\State  {\bfseries Output:} $(\hat{S}_1, \hat{S}_2)$
	\end{algorithmic}
\end{algorithm}
\vspace{-0.05in}

\begin{restatable}{theorem}{thmsone} \label{thms1} 
	The selection Algorithm~\ref{algo:trun1} yields a $1/\sqrt{\min\{s_1, s_2\}}$ approximation ratio for SSVD \eqref{eq_ssvdcom}, i.e., suppose that the output of Algorithm~\ref{algo:trun1} is $(\hat{S}_1, \hat{S}_2)$, then
	\begin{align*}
	||\bm A_{\hat{S}_1, \hat{S}_2}||_{(k)} \ge \frac{1}{\sqrt{\min\{s_1, s_2\}}}  w^*,
	\end{align*}
	and  the ratio is tight (i.e., the best ratio that one can prove) {when $s_1\le m/2$ and $s_2\le n/2$}.
\end{restatable}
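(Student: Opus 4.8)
The plan is to read off the ratio from a short chain of norm inequalities that uses only two facts: $(\hat{S}_1,\hat{S}_2)$ is Frobenius-optimal among feasible index pairs, and every feasible submatrix has rank at most $s:=\min\{s_1,s_2\}$. Write $(S_1^*,S_2^*)$ for an optimizer of SSVD, so $w^*=\|\bm A_{S_1^*,S_2^*}\|_{(k)}$. Applying Cauchy--Schwarz to the $k$ largest singular values of $\bm A_{S_1^*,S_2^*}$ and then enlarging the sum to all of its singular values gives $w^*\le\sqrt{k}\,\|\bm A_{S_1^*,S_2^*}\|_F$; since $(S_1^*,S_2^*)$ is feasible for the selection problem~\eqref{eq_alter}, optimality of $(\hat{S}_1,\hat{S}_2)$ gives $\|\bm A_{S_1^*,S_2^*}\|_F\le\|\bm A_{\hat{S}_1,\hat{S}_2}\|_F$. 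It then remains to prove the auxiliary comparison $\|\bm B\|_F\le\sqrt{s/k}\,\|\bm B\|_{(k)}$ for every matrix $\bm B$ of rank at most $s$ and to apply it with $\bm B=\bm A_{\hat{S}_1,\hat{S}_2}$. Combining, $w^*\le\sqrt{k}\cdot\sqrt{s/k}\,\|\bm A_{\hat{S}_1,\hat{S}_2}\|_{(k)}=\sqrt{s}\,\|\bm A_{\hat{S}_1,\hat{S}_2}\|_{(k)}$, which is the claim; the cancellation of $k$ explains why both the algorithm and the guarantee are independent of $k$.

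The only genuine computation is the auxiliary comparison, which I would record as a small lemma on the rank-constrained reverse inequality between the Frobenius and Ky Fan $k$-norms. Order the singular values $\sigma_1\ge\cdots\ge\sigma_r\ge0$ of $\bm B$, with $r\le s$. From $\|\bm B\|_{(k)}=\sum_{i\le k}\sigma_i\ge k\sigma_k$ we get $\sigma_i\le\sigma_k\le\|\bm B\|_{(k)}/k$ for all $i>k$, hence $\sum_{i>k}\sigma_i^2\le(s-k)\|\bm B\|_{(k)}^2/k^2$; together with $\sum_{i\le k}\sigma_i^2\le(\sum_{i\le k}\sigma_i)^2=\|\bm B\|_{(k)}^2$ this yields $\|\bm B\|_F^2\le\bigl(1+\tfrac{s-k}{k^2}\bigr)\|\bm B\|_{(k)}^2$, and $1+\tfrac{s-k}{k^2}\le\tfrac{s}{k}$ because it is equivalent to $k(k-1)\le s(k-1)$, i.e.\ to $k\le s$. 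With this lemma in hand the ratio proof is the three lines above.

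For the tightness statement I would construct, whenever $s_1\le m/2$ and $s_2\le n/2$, a family of instances (it suffices to take the rank parameter $k=1$) on which $\|\bm A_{\hat{S}_1,\hat{S}_2}\|_{(k)}/w^*$ is arbitrarily close to $1/\sqrt{s}$, using Sylvester's Hadamard matrix. Let $\bm A$ be supported on two blocks placed on disjoint sets of rows and of columns --- possible exactly because $s_1\le m/2$ and $s_2\le n/2$: a ``flat'' block of size $s_1\times s_2$ obtained by scaling a suitable truncation of $\bm H(t)$ so that all $s$ of its singular values equal some value $c$, and a rank-one block obtained by scaling an all-ones matrix so that its single nonzero singular value equals $(1-\varepsilon)\sqrt{s}\,c$, for a small $\varepsilon>0$. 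Both blocks have entries of uniform magnitude, and since the off-diagonal blocks of $\bm A$ vanish, every feasible submatrix is block-diagonal in these coordinates, so its Frobenius norm and its Ky Fan $k$-norm are controlled by those of its two sub-blocks. Using that passing to a submatrix cannot increase the spectral norm, together with an AM--GM estimate showing that any ``mixed'' selection (taking rows/columns from both blocks) strictly loses Frobenius mass relative to the whole flat block, one verifies that the unique Frobenius maximizer is the full flat block (so $\|\bm A_{\hat{S}_1,\hat{S}_2}\|_{(k)}=c$), whereas $w^*$ is attained at the rank-one block (so $w^*=(1-\varepsilon)\sqrt{s}\,c$); the ratio is therefore $\frac{1}{(1-\varepsilon)\sqrt{s}}$, which tends to $1/\sqrt{s}$ as $\varepsilon\downarrow0$. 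Hence no universal (in particular no $k$-independent) constant strictly larger than $1/\sqrt{s}$ is achievable, so the bound is tight.

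I expect the verification in the tightness construction to be the main obstacle: one must check that \emph{no} mixed submatrix beats the flat block in Frobenius norm (the AM--GM bookkeeping, combined with the uniform-magnitude entries of both blocks) and that none beats the rank-one block in Ky Fan $k$-norm (via submatrix spectral monotonicity), and the hypotheses $s_1\le m/2$, $s_2\le n/2$ are consumed precisely in placing the two blocks on disjoint rows and columns. The approximation-ratio half, by contrast, is short once the Frobenius--Ky-Fan lemma is established.
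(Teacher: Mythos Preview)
Your approximation-ratio argument is correct and follows the same three-step skeleton as the paper's proof: Cauchy--Schwarz on the top-$k$ singular values of the optimal block, Frobenius optimality of $(\hat S_1,\hat S_2)$, then a Frobenius--Ky Fan comparison on the output block. The only difference is in how you establish $\|\bm B\|_F\le\sqrt{s/k}\,\|\bm B\|_{(k)}$: you bound the tail $\sum_{i>k}\sigma_i^2$ via $\sigma_i\le\|\bm B\|_{(k)}/k$ and then check $1+(s-k)/k^2\le s/k$, whereas the paper gets the equivalent inequality in one stroke from the averaging fact $\sum_{i\le k}\sigma_i^2\ge\tfrac{k}{s}\sum_{i\le s}\sigma_i^2$ (the mean of the $k$ largest terms dominates the overall mean). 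Your route is valid but longer than necessary.

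Your tightness construction is genuinely different from the paper's. The paper takes $k=s_1$, $m=2s_1$, $n=2s_2$, and places an all-ones $s_1\times s_2$ block alongside a \emph{diagonal} block $\bm D=\sum_{i\in[k]}\tfrac{s_1\sqrt{s_2}}{k}\bm e_i\bm f_i^\top$; the two blocks have equal Frobenius norm, so Algorithm~1 may (under adversarial tie-breaking) return the all-ones block, hitting the ratio exactly. You instead take $k=1$, use a truncated Hadamard matrix for the flat block, and perturb by $(1-\varepsilon)$ to force uniqueness of the Frobenius maximizer, obtaining the ratio only in the limit $\varepsilon\downarrow 0$. Both work; the paper's example is simpler (no Hadamard structure, no limit) but leans on tie-breaking, while yours is cleaner about uniqueness. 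Two small notes: the ``AM--GM bookkeeping'' you anticipate is not needed---since your flat block's entry magnitude strictly exceeds the rank-one block's, the Frobenius norm of any mixed selection is trivially dominated by the full flat block---and your Hadamard truncation implicitly requires $\max\{s_1,s_2\}$ to be a power of two, which is fine for tightness but worth stating.
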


%	\begin{proof}
%	See Appendix \ref{proof:thems1}. \qed
%	\end{proof}

\begin{proof}%[Proof of Theorem~\ref{thms1}]
	Let us define $\hat{s}:= \min\{s_1, s_2\}$, denote an optimal solution of SSVD \eqref{eq_ssvdcom} by $(S_1^*, S_2^*)$, and define the singular values of the optimal submatrix $\bm A_{S_1^*, S_2^*}$ to be $\sigma_1^* \ge\cdots \ge \sigma_{\hat{s}}^* \ge 0$.
	Correspondingly, for the output $(\hat{S}_1, \hat{S}_2)$ of Algorithm~\ref{algo:trun1}, we denote the singular values of the submatrix $\bm A_{\hat{S}_1, \hat{S}_2}$ to be $\hat{\sigma}_1 \ge\cdots \ge \hat{\sigma}_{\hat{s}} \ge 0$.
	
	Since $(\hat{S}_1, \hat{S}_2)$ is feasible to SSVD \eqref{eq_ssvdcom}, its objective value is equal to 
	\begin{align*}
	& ||\bm A_{\hat{S}_1, \hat{S}_2}||_{(k)}  = \sum_{i\in [k]} \hat{\sigma}_i \ge \sqrt{\sum_{i\in [k]} \hat{\sigma}_i^2} \ge \sqrt{\frac{k}{\hat{s}} \sum_{i\in [\hat{s}]} \hat{\sigma}_i^2}
	= \sqrt{\frac{k}{\hat{s}}} ||\bm A_{\hat{S}_1, \hat{S}_2}||_F 
	\ge \sqrt{\frac{k}{\hat{s}}} ||\bm A_{{S}_1^*, {S}_2^*}||_F  \\
	&= \sqrt{\frac{k}{\hat{s}}} \sqrt{\sum_{i\in [\hat{s}]} (\sigma_i^*)^2} 
	\ge \sqrt{\frac{k}{\hat{s}}} \sqrt{\sum_{i\in [k]} (\sigma_i^*)^2} \ge \frac{1}{\sqrt{\hat{s}}} \sum_{i\in [k]}\sigma_i^* = \frac{1}{\sqrt{\hat{s}}}  w^*,
	\end{align*}	
	where the first and fifth inequalities are because of Cauchy-Schwarz inequality, the second and third equalities  are due to the definition of the Frobenius norm, the third inequality is due to the optimality of the solution $(\hat{S}_1, \hat{S}_2)$ to problem \eqref{eq_alter}, and the forth inequality is due to $k\leq \hat{s}$.

	The ratio is tight when $s_1\le m/2$  and $s_2\le n/2$ since the following example can achieve it.
	\begin{example} \label{eg1}
		Suppose that $k=s_1 < s_2$, $m =2s_1$, $n= 2s_2$ and matrix $\bm A \in \Re^{m \times n}$ is defined by
		\begin{align*}
		\bm A = \begin{pmatrix}
		\bm 1_{s_1, s_2} & \bm 0_{s_1, s_2}\\
		\bm 0_{s_1, s_2} & \bm D
		\end{pmatrix}, \bm D \in \Re^{s_1\times s_2}:= \sum_{i\in [k]} \frac{s_1\sqrt{s_2}}{k} \bm e_i \bm f_i^{\top},
		\end{align*}
		where $\bm e_i \in \Re^{s_1}$ is the $i$th column of the identity matrix $\bm I_{s_1}$ and $\bm f_i \in \Re^{s_2}$ is the $i$th column of the identity matrix $\bm I_{s_2}$.
	\end{example}
	In Example~\ref{eg1},	the Algorithm~\ref{algo:trun1} returns a solution $\hat{S}_1 = [s_1]$ and $\hat{S}_2 = [s_2]$ since $||\bm 1_{s_1, s_2}||_F= ||\bm D||_F $, and the objective value of  SSVD \eqref{eq_ssvdcom}  is $||\bm 1_{s_1, s_2}||_{(k)}=\sqrt{s_1s_2}$. However, the true optimal value is $||\bm D||_{(k)}=s_1\sqrt{s_2}$. This shows that $1/\sqrt{\min\{s_1,s_2\}}$-approximation ratio is achievable. \qed
\end{proof}

The computational difficulty of selection Algorithm~\ref{algo:trun1} lies in how to effectively solve  problem \eqref{eq_alter} at Step 2. In fact,  problem \eqref{eq_alter} can be shown to be NP-hard. Fortunately, we prove that the problem \eqref{eq_alter} can be formulated as a mixed-integer linear program (MILP), making it much more tractable. {In addition, problem \eqref{eq_alter} can be easily solved when $\bm A$ is rectangular diagonal, $s_1=1$, or $s_2=1$.}

\begin{restatable}{proposition}{propsone} \label{props1}
	The problem \eqref{eq_alter} is NP-hard and can be formulated as  the following MILP
	\begin{align*}
	\max_{
		\begin{subarray}{c}
		\bm z\in [0,1]^{m \times n},\\
		\bm x \in \{0,1\}^m, \bm y \in \{0,1\}^n
		\end{subarray} } \bigg\{ \sum_{i\in [m]} \sum_{j\in [n]} A_{ij}^2 z_{ij}: 
	z_{ij}\le x_i, z_{ij}\le y_j ,\forall i\in [m], \forall j\in [n], \sum_{i\in [m]} x_i \le s_1,   \sum_{j\in [n]} y_j \le s_2 \bigg\}.
	\end{align*}
\end{restatable}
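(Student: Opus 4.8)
The plan is to prove two things separately: (a) the MILP reformulation, which is the easy direction, and (b) NP-hardness, which is the main obstacle.

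For the reformulation, I would start from the observation that $\|\bm A_{S_1,S_2}\|_F^2 = \sum_{i\in S_1}\sum_{j\in S_2} A_{ij}^2$, so problem \eqref{eq_alter} (after squaring the objective, which is monotone for nonnegative quantities) is exactly the problem of choosing index sets $S_1,S_2$ of sizes at most $s_1,s_2$ maximizing $\sum_{i\in S_1,j\in S_2}A_{ij}^2$. Introducing binary indicators $x_i = \one[i\in S_1]$ and $y_j=\one[j\in S_2]$, the product term $x_i y_j$ linearizes in the standard way: since all coefficients $A_{ij}^2\ge 0$, we may relax $z_{ij}\in[0,1]$ and impose only $z_{ij}\le x_i$, $z_{ij}\le y_j$; at any optimal solution with $x,y$ fixed, each $z_{ij}$ will be pushed up to $\min\{x_i,y_j\}=x_iy_j$ because its objective coefficient is nonnegative. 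Hence the MILP value equals $\max\{\sum_{ij}A_{ij}^2 x_iy_j : \sum_i x_i\le s_1,\sum_j y_j\le s_2, x\in\{0,1\}^m,y\in\{0,1\}^n\}$, which is the squared optimal value of \eqref{eq_alter}. I would also note the trivial special cases: if $\bm A$ is rectangular diagonal, the objective decouples and one just picks the $s_1$ (resp.\ $s_2$) rows/columns with largest $|A_{ii}|$ subject to matching supports; if $s_1=1$ or $s_2=1$, one picks a single row (or column) of largest norm restricted to a best $s_2$-subset of its entries, which is a sort.

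For NP-hardness, the natural route is a reduction from a known hard problem for the bipartite/biclique-type objective. The cleanest candidate is the \emph{densest $k$-subgraph} / maximum edge biclique family, or more directly a reduction from a problem like CLIQUE or MAX-CUT to the quadratic $\max\{\sum_{ij}A_{ij}^2 x_iy_j\}$ form. I would take a $0/1$ matrix $\bm A$ (so $A_{ij}^2=A_{ij}$) equal to the biadjacency-type matrix of a graph $G$: given $G=(V,E)$ on $n$ vertices, set $m=n$, index both rows and columns by $V$, and let $A_{ij}=1$ iff $i=j$ or $\{i,j\}\in E$ (or some similar gadget). With $s_1=s_2=s$ and forcing $S_1=S_2$ via the diagonal ones (the diagonal contributes $|S_1\cap S_2|$, which is maximized only when the two sets coincide given the size budget), the objective $\sum_{i\in S_1,j\in S_2}A_{ij}$ restricted to $S_1=S_2=S$ counts $|S| + 2|E(S)|$ where $E(S)$ is the set of edges inside $S$; maximizing this over $|S|\le s$ is exactly densest-$s$-subgraph, which is NP-hard. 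The delicate point — and the part I expect to be the real obstacle — is enforcing $S_1=S_2$: the diagonal gadget gives an incentive but one must check that no asymmetric solution can beat the symmetric optimum, which may require scaling the diagonal entries up (e.g.\ putting a large weight $M$ on the diagonal so that any optimal solution maximizes $|S_1\cap S_2|=s$ first, hence $S_1=S_2$) and then arguing the residual off-diagonal optimization is densest-subgraph. Getting the weights and the size constraints to line up so the reduction is both correct and polynomial is the crux; everything else is bookkeeping.
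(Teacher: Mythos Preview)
Your proposal is correct and follows essentially the same route as the paper: the MILP reformulation via squaring and McCormick linearization is identical, and for NP-hardness the paper uses precisely your ``scale up the diagonal'' gadget (with $M=n^2$) to force $S_1=S_2$ and then reduces the residual off-diagonal problem to a hard graph problem. The only cosmetic difference is that the paper phrases the target as \textsc{Maximum Clique} (asking whether the optimal value hits $s(s-1)+s\cdot n^4$ for a given $s$) rather than densest-$s$-subgraph, but your version is equally valid and arguably more direct.
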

\begin{proof}
	See Appendix \ref{proof:props1}. \qed
\end{proof}

\textbf{Selection Algorithm~\ref{algo:trun2}.} The combinatorial optimization problem \eqref{eq_alter} in Algorithm~\ref{algo:trun1} can be difficult since it jointly selects the two subsets $(\hat{S}_1, \hat{S}_2)$. To be more scalable, we propose another selection algorithm that selects rows and columns in a separable manner and then combines them  to form a submatrix selection for SSVD \eqref{eq_ssvdcom}. 
% Given matrix $\bm A$, the selection Algorithm~\ref{algo:trun2} aims to select a size-$s_2$ row vector with maximum 2-norm and select a size-$s_1$ column vector with maximum 2-norm, respectively.

Specifically, given a data matrix $\bm A$, our selection Algorithm~\ref{algo:trun2} runs as follows: (a) first, for each $j\in [n]$, we compute a size-$s_1$ subset of rows $\hat{S}_1^j \subseteq [m]$ corresponding to the indices of the $s_1$ largest absolute entries in $j$th column $\bm A_{:,j}$, and we also let the subset of columns $\hat{S}_2^j \subseteq [n]$ denote the index set of $s_2$ columns having the largest $L_2$ norm from submatrix $\bm A_{\hat{S}_1^j, [n]}$, and form a feasible solution $(\hat{S}_1^j , \hat{S}_2^j )$ to SSVD \eqref{eq_ssvdcom}; (b) next, we perform the same selection procedure on matrix $\bm A$ by swapping the order of row and column operations and obtain another $m$ feasible solutions to SSVD \eqref{eq_ssvdcom}; (c) finally, Algorithm~\ref{algo:trun2} returns the one with the largest Ky Fan $k$-norm among all the $m+n$ candidate solutions. 

Different from Algorithm~\ref{algo:trun1}, we see  that the approximation ratio of the selection Algorithm~\ref{algo:trun2} is proportional to $1/\sqrt{k}$ and its time complexity also depends on $k$, as shown in Theorem~\ref{thms2}. %When $k=1$, the ratio becomes $1/\sqrt{\min\{s_1, s_2\}}$, which matches the best-known $1/\sqrt{\min\{s_1, s_2\}}$ approximation ratio of rank-one SSVD in \citet{li2020exact}.
We further show that this approximation ratio is tight by providing an instance that can achieve it.

\begin{algorithm}[htb]
	\caption{Selection Algorithm for SSVD \eqref{eq_ssvdcom}}
	\label{algo:trun2}
	\begin{algorithmic}[1]
		\State {\bfseries Input:} A matrix $\bm A \in \Re^{m \times n}$,  integers   $s_1\in [m]$, $s_2\in [n]$, and $1 \le k \le \min\{s_1, s_2\}$
		\For{$j\in [n]$}
		\State  Let $\hat{S}_1^j $ denote the index set of the $s_1$ largest absolute entries in $\bm A_{:,j}$
		\State  
		$
		\hat{S}_2^j \in \argmax_{S\subseteq [n]} \{\sum_{\ell \in S} ||\bm A_{\hat{S}_1^j, {\ell} }||_2: |S|=s_2 \}
		$		
		\EndFor
		
		\For{$i \in [m]$}
		\State Let $\hat{T}_2^i =$ denote the index set of the $s_2$ largest absolute entries in $\bm A_{i,:}$
		\State 
		$
		\hat{T}_1^i \in \argmax_{T\subseteq [m]} \{\sum_{\ell \in T} ||\bm A_{{\ell}, \hat{T}_2^i }||_2: |T|=s_1 \}
		$		
		\EndFor
		
		\State  {\bfseries Output:} The best solution for SSVD \eqref{eq_ssvdcom} among the candidate set $\{(\hat{S}_1^j, \hat{S}_2^j)\}_{j\in [n]} \cup\{(\hat{T}_1^i, \hat{T}_2^i)\}_{i\in [m]} $
	\end{algorithmic}
\end{algorithm}
%	\vspace{-0.1in}

%	To begin with, let us first prove the following technical lemma.
%	\begin{lemma} \label{lem:sigma1}
%		For any matrix $\bm B \in \Re^{n \times m}$, its largest singular value must satisfy
%		\begin{align*}
%		\sigma_{1}\left(\bm B\right) \ge \max \left\{ \max_{i\in [m]} ||\bm B_{i, :}||_2, \max_{j\in [n]} ||\bm B_{:, j}||_2\right\}.
%		\end{align*}
%	\end{lemma}
%	\begin{proof}Note that 
%		\begin{align*} 
%		\sigma_{1}\left(\bm B\right) & = \max \left(\sqrt{\lambda_1(\bm B \bm B^{\top})}, \sqrt{\lambda_1(\bm B^{\top} \bm B)} \right)  \notag \ge \max\left\{ \max_{i\in [m]} \sqrt{\left(\bm B \bm B^{\top}\right)_{ii}} ,\ \  \max_{j\in [n]}\sqrt{ \left(\bm B^{\top} \bm B\right)_{jj}}\right\} \notag \\
%		&= \max \left\{ \max_{i\in [m]} ||\bm B_{i, :}||_2, \max_{j\in [n]} ||\bm B_{:, j}||_2\right\},
%		\end{align*}
%		where the first equality is because  $\sigma_{1}^2(\bm B)$ is the largest eigenvalue of both symmetric matrices $\bm B \bm B^{\top}$ and $\bm B^{\top} \bm B$, 
%		the second inequality is due to the fact that the largest eigenvalue of any symmetric matrix must be no less than its largest diagonal element, and the third equality results from the fact that for matrix $\bm B \bm B^{\top}$ (or $\bm B^{\top} \bm B)$), the $i$th (or $j$th) diagonal element is equal to the two-norm of the $i$th row (or $j$th column) of matrix $\bm B$ for all $i\in [m]$ (or $j\in [n]$).
%	\end{proof}
%	Now we are ready to prove the main result.

\begin{restatable}{theorem}{thmstwo} \label{thms2} 
	The selection Algorithm~\ref{algo:trun2} yields a $1/\sqrt{k\min\{s_1, s_2\}}$ approximation ratio for SSVD \eqref{eq_ssvdcom}, i.e., suppose that the output of  Algorithm~\ref{algo:trun2} is $(\hat{S}_1, \hat{S}_2)$, then
	\begin{align*}
	||\bm A_{\hat{S}_1, \hat{S}_2}||_{(k)} \ge \frac{1}{\sqrt{k\min\{s_1, s_2\}}}  w^*,
	\end{align*}
	the ratio is tight  {when $s_1\le m/2$ and $s_2\le n/2$}, and the time complexity of  Algorithm~\ref{algo:trun2} is $O((m+n)(m\log (m)+ n\log(n) + ks_1s_2))$.
\end{restatable}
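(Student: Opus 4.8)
The plan is to prove the ratio by exhibiting one explicit candidate among the $m+n$ produced by Algorithm~\ref{algo:trun2} whose Ky Fan $k$-norm already meets the bound. Since the algorithm examines both the column-initiated candidates $\{(\hat S_1^j,\hat S_2^j)\}_{j\in[n]}$ and the row-initiated candidates $\{(\hat T_1^i,\hat T_2^i)\}_{i\in[m]}$, by this symmetry I will assume without loss of generality that $s_1\le s_2$ (so $\min\{s_1,s_2\}=s_1$) and work with the row-initiated candidates; the case $s_2<s_1$ is identical after swapping the roles of rows and columns. Let $(S_1^*,S_2^*)$ be an optimal solution of SSVD~\eqref{eq_ssvdcom}, write $\bm A^*:=\bm A_{S_1^*,S_2^*}$ so that $w^*=\|\bm A^*\|_{(k)}=\sum_{i\in[k]}\sigma_i(\bm A^*)$, pick a row $i^*\in\argmax_{i\in S_1^*}\|\bm A_{i,S_2^*}\|_2$, and set $r^*:=\|\bm A_{i^*,S_2^*}\|_2$, the largest row-$\ell_2$-norm of $\bm A^*$.

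First I would argue that the row-initiated candidate generated from $i^*$ is a good witness: $\sigma_1\big(\bm A_{\hat T_1^{i^*},\hat T_2^{i^*}}\big)\ge r^*$. Indeed, because $\hat T_2^{i^*}$ indexes the $s_2$ entries of $\bm A_{i^*,:}$ of largest absolute value and $|S_2^*|\le s_2$, the sum of the $s_2$ largest squared entries dominates the sum over $S_2^*$, so $\|\bm A_{i^*,\hat T_2^{i^*}}\|_2\ge\|\bm A_{i^*,S_2^*}\|_2=r^*$; and because $\hat T_1^{i^*}$ indexes the $s_1$ rows of $\bm A_{[m],\hat T_2^{i^*}}$ of largest $\ell_2$-norm, it contains a globally largest such row, and the largest singular value of any matrix is at least the largest $\ell_2$-norm of its rows, giving $\sigma_1\big(\bm A_{\hat T_1^{i^*},\hat T_2^{i^*}}\big)\ge\max_{\ell\in[m]}\|\bm A_{\ell,\hat T_2^{i^*}}\|_2\ge\|\bm A_{i^*,\hat T_2^{i^*}}\|_2\ge r^*$. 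Next, a Cauchy--Schwarz and pigeonhole estimate bounds the optimum: $w^*=\sum_{i\in[k]}\sigma_i(\bm A^*)\le\sqrt{k}\,\|\bm A^*\|_F$ and $\|\bm A^*\|_F^2=\sum_{i\in S_1^*}\|\bm A_{i,S_2^*}\|_2^2\le|S_1^*|\,(r^*)^2\le s_1(r^*)^2$, hence $r^*\ge w^*/\sqrt{ks_1}$. Combining these with the facts that Algorithm~\ref{algo:trun2} outputs the candidate of largest Ky Fan $k$-norm and that $\|\cdot\|_{(k)}\ge\sigma_1(\cdot)$,
\begin{align*}
\|\bm A_{\hat S_1,\hat S_2}\|_{(k)}\ \ge\ \|\bm A_{\hat T_1^{i^*},\hat T_2^{i^*}}\|_{(k)}\ \ge\ \sigma_1\big(\bm A_{\hat T_1^{i^*},\hat T_2^{i^*}}\big)\ \ge\ r^*\ \ge\ \frac{1}{\sqrt{ks_1}}\,w^*\ =\ \frac{1}{\sqrt{k\min\{s_1,s_2\}}}\,w^*,
\end{align*}
which is the claimed ratio. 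For the complexity, for each of the $n+m$ initiating indices one sorts a length-$m$ column (resp.\ length-$n$ row) in $O(m\log m)$ (resp.\ $O(n\log n)$), ranks the $n$ column norms (resp.\ $m$ row norms) of the induced submatrix, and evaluates the $k$-truncated SVD of an $s_1\times s_2$ candidate in $O(ks_1s_2)$ time; summing over all indices yields $O((m+n)(m\log m+n\log n+ks_1s_2))$.

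The hard part will be the tightness claim: producing, for parameters with $s_1\le m/2$ and $s_2\le n/2$, an instance on which $\|\bm A_{\hat S_1,\hat S_2}\|_{(k)}=\tfrac{1}{\sqrt{k\min\{s_1,s_2\}}}\,w^*$ exactly. Every inequality in the chain above must be made simultaneously tight: $w^*=\sqrt{k}\,\|\bm A^*\|_F$ forces the optimal submatrix to have exactly $k$ nonzero and mutually equal singular values; $\|\bm A^*\|_F=\sqrt{s_1}\,r^*$ forces all $s_1$ of its rows to have equal norm; and the returned candidate must be essentially rank one with $\sigma_1=r^*$. Note that a plain block-diagonal construction in the spirit of Example~\ref{eg1} will \emph{not} suffice here, since the per-column greedy step of Algorithm~\ref{algo:trun2} would recover the optimal block verbatim; the ``decoy'' must instead be entangled with the optimal rows and columns. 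My plan is to build $\bm A$ from a scaled Sylvester--Hadamard block $\bm H(t)$ (all of whose singular values coincide, so a suitable submatrix realizes ``$k$ equal singular values'') together with a dominant attractor block of larger entries sharing rows/columns with it, calibrated so that Steps~2--4 of Algorithm~\ref{algo:trun2} always fall inside the attractor and return a rank-one submatrix of Ky Fan $k$-norm exactly $r^*$, while the genuine optimum lives in the Hadamard block with value $w^*=\sqrt{k\min\{s_1,s_2\}}\,r^*$; the slack $s_1\le m/2$, $s_2\le n/2$ is what makes room to place the attractor alongside the optimal block. Checking the exact values is then a direct computation, and I expect the delicate point to be choosing the scalings so the greedy selection is genuinely diverted rather than accidentally picking up part of the optimal block.
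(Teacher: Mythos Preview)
Your derivation of the approximation ratio and the complexity analysis are correct and follow essentially the same route as the paper: both arguments hinge on (i) $\sigma_1$ of any matrix dominates its largest row/column $\ell_2$-norm, (ii) the greedy choice of $\hat T_2^{i^*}$ (resp.\ $\hat T_1^{i^*}$) makes that row/column norm at least as large as the one coming from the optimal support, and (iii) the Cauchy--Schwarz/Frobenius chain $w^*\le\sqrt{k}\|\bm A^*\|_F\le\sqrt{k\min\{s_1,s_2\}}\,r^*$. Your packaging is in fact a bit cleaner: you pick $i^*$ directly from $S_1^*$ and argue about the single candidate $(\hat T_1^{i^*},\hat T_2^{i^*})$, whereas the paper introduces auxiliary problems $(\bar S,j^*)$ and $(\bar T,i^*)$ and proves a separate claim that the algorithm's output dominates $\max\{\|\bm A_{\bar S,j^*}\|_2,\|\bm A_{i^*,\bar T}\|_2\}$ before closing with the same Frobenius bound.

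For tightness your plan has the right skeleton (a Hadamard-based optimal block with $k$ equal singular values, plus decoys sharing rows/columns with it), but two aspects of the paper's actual construction are worth flagging. First, the paper does \emph{not} use an attractor with strictly larger entries; every nonzero entry of the instance has magnitude exactly $1$, and the algorithm is diverted from the optimal block via adversarial \emph{tie-breaking} at Steps~3 and~7. If you insist on strictly larger attractor entries you will typically get only asymptotic tightness (a family with ratio $\to 1/\sqrt{k\min\{s_1,s_2\}}$) rather than an instance achieving equality, because inflating the decoy also inflates the returned $\sigma_1$ above $r^*$. Second, the construction is more intricate than ``one Hadamard block plus one attractor'': the paper uses \emph{three} interlocking blocks $\bm A_1,\bm A_2,\bm A_3$, where $\bm A_3$ (the optimum) shares its rows with $\bm A_1$ and its columns with $\bm A_2$, and the roles of $\bm A_1,\bm A_2$ are to absorb, respectively, the column-initiated and row-initiated passes of the algorithm so that the best returned value is $\max\{\sqrt{s_1},\sqrt{s_2}\}$ against $w^*=\|\bm A_3\|_{(k)}=\sqrt{ks_1s_2}$. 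Your intuition that a ``plain block-diagonal'' decoy fails is correct, but be prepared for the bookkeeping of checking each of the $m+n$ initiating indices under the tie-breaking convention.
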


\begin{proof}
	The proof can be divided into three parts: derivation and tightness of the approximation ratio and derivation of the time complexity.

	\noindent\textbf{Deriving the Approximation Ratio.} 	
	We first show a technical result that for any matrix $\bm B \in \Re^{m\times n}$, its largest singular value must satisfy
	\begin{align} \label{lem:sigma1}
	\sigma_{1}\left(\bm B\right) & = \max \left(\sqrt{\lambda_1(\bm B \bm B^{\top})}, \sqrt{\lambda_1(\bm B^{\top} \bm B)} \right)  \notag \ge \max\left\{ \max_{i\in [m]} \sqrt{\left(\bm B \bm B^{\top}\right)_{ii}} ,\ \  \max_{j\in [n]}\sqrt{ \left(\bm B^{\top} \bm B\right)_{jj}}\right\} \notag \\
	&= \max \left\{ \max_{i\in [m]} ||\bm B_{i, :}||_2, \max_{j\in [n]} ||\bm B_{:, j}||_2\right\},
	\end{align}
	where the first equality is because  $\sigma_{1}^2(\bm B)$ is the largest eigenvalue of both symmetric matrices $\bm B \bm B^{\top}$ and $\bm B^{\top} \bm B$, 
	the second inequality is because the largest eigenvalue of any symmetric matrix must be no less than its largest diagonal element, and the third equality results from the fact that for matrix $\bm B \bm B^{\top}$ (or $\bm B^{\top} \bm B)$), the $i$th (or $j$th) diagonal element is equal to the two-norm of the $i$th row (or $j$th column) of matrix $\bm B$ for all $i\in [m]$ (or $j\in [n]$).
	
	Then let us consider the following two combinatorial optimization problems
	\begin{align}\label{eq_opt}
	(\bar{S}, j^*)  \in \max_{ S\subseteq [m], j\in [n]} \{||\bm A_{S, j}||_2: |S|=s_1\}, (\bar{T}, i^*) \in \max_{ S\subseteq [n], i\in [m]} \left\{||\bm A_{i, S}||_2: |S|=s_2\right\}.
	\end{align}
	
	\begin{subequations}
		Next, we will prove the following claim.
		\begin{claim}\label{claim1}
			For the output $(\hat{S}_1, \hat{S}_2)$ of Algorithm~\ref{algo:trun2} , we have 
			$$	||\bm A_{\hat{S}_1, \hat{S}_2}||_{(k)}  \ge \max \{||\bm A_{\bar{S}, j^*}||_2,  ||\bm A_{ i^*, \bar{T}}||_2\}.$$
		\end{claim}
		\begin{proof}
			For the $j^*$-th column of matrix $\bm A$, Algorithm~\ref{algo:trun2} selects subsets $(\hat{S}_1^{j^*}, \hat{S}_2^{j^*})$ at Steps 3 and 4 and the selected two subsets must satisfy
			\begin{align} \label{ineq1}
			\max_{\ell \in \hat{S}_2^{j^*}} ||\bm A_{\hat{S}_1^{j^*}, \ell}||_2 = \max_{\ell \in [n]} ||\bm A_{\hat{S}_1^{j^*}, \ell}||_2 \ge ||\bm A_{\hat{S}_1^{j^*}, j^*}||_2 \ge ||\bm A_{\bar{S}, j^*}||_2, 
			\end{align}
			where the first equality is due to the optimality of $\hat{S}_2^{j^*}$ at Step 4, and the second inequality results from the fact that $\hat{S}_1^{j^*}$ denotes the index set of the $s_1$ largest absolute entries in $\bm A_{:,j^*}$.
			
			Similarly, for the corresponding subsets $(\hat{T}_1^{i^*}, \hat{T}_2^{i^*})$, we can show that 
			\begin{align} \label{ineq2}
			\max_{\ell \in \hat{T}_1^{i^*}} ||\bm A_{ \ell,  \hat{T}_2^{i^*}}||_2 \ge ||\bm A_{ i^*, \bar{T}}||_2.
			\end{align}
			
			Since the output $(\hat{S}_1, \hat{S}_2)$ of Algorithm~\ref{algo:trun1} is the best solution among the candidates $\{(\hat{S}_1^j, \hat{S}_2^j)\}_{j\in [n]} \cup\{(\hat{T}_1^i, \hat{T}_2^i)\}_{i\in [m]} $, the output objective value must satisfy
			\begin{align*}
			&||\bm A_{\hat{S}_1, \hat{S}_2}||_{(k)} \ge \max_{{S}_1, {S}_2} \left\{ \sigma_{1}(\bm A_{{S}_1, {S}_2}): ({S}_1, {S}_2) \in \{(\hat{S}_1^j, \hat{S}_2^j)\}_{j\in [n]} \cup\{(\hat{T}_1^i, \hat{T}_2^i)\}_{i\in [m]}  \right\}\notag \\
			&\ge \max_{{S}_1, {S}_2} \bigg\{\max \left\{  \max_{j\in S_2} ||\bm A_{{S}_1, j}||_2, \max_{i\in S_1} ||\bm A_{i, {S}_2}||_2\right\}: ({S}_1, {S}_2) \in \{(\hat{S}_1^j, \hat{S}_2^j)\}_{j\in [n]} \cup\{(\hat{T}_1^i, \hat{T}_2^i)\}_{i\in [m]}  \bigg \} \notag \\
			&\ge \max \bigg\{\max_{\ell \in \hat{S}_2^{j^*}} ||\bm A_{\hat{S}_1^{j^*}, \ell}||_2,  \max_{\ell \in \hat{T}_1^{i^*}} ||\bm A_{ \ell,  \hat{T}_2^{i^*}}||_2 \bigg\} \ge \max \{|| \bm A_{\bar{S}, j^*}||_2,  ||\bm A_{ i^*, \bar{T}}||_2\},
			\end{align*}
			where the first inequality is due to the output solution having the largest objective value and $ ||\bm A_{{S}_1, {S}_2} ||_{(k)}\ge \sigma_{1}(\bm A_{{S}_1, {S}_2}) $, the second one is due to \eqref{lem:sigma1},  the third one is from the feasibility of two solutions $(\hat{S}_1^{j^*}, \hat{S}_2^{j^*})$ and $(\hat{T}_1^{i^*}, \hat{T}_2^{i^*})$, and the last one is from the inequalities \eqref{ineq1} and \eqref{ineq2}. \qedA
		\end{proof}
	\end{subequations}
	
	Now we let $(S_1^*, S_2^*)$ denote an optimal solution to SSVD \eqref{eq_ssvdcom}. According to Claim \ref{claim1}, the optimal value is upper bounded by
	\begin{align*}
	&w^* = ||\bm A_{S_1^*, S_2^*}||_{(k)} \le \sqrt{k}\sqrt{\sum_{ i \in [k]} \sigma_{i}^2(\bm A_{S_1^*, S_2^*})} \le \sqrt{k}|| \bm A_{S_1^*, S_2^*} ||_F =  \sqrt{k} \sqrt{\sum_{ i \in S_1^*} \sum_{j\in S_2^*} A_{ij}^2} \\
	&\le
	\begin{cases}
	\sqrt{ks_1} \max_{i\in S_1^*} ||\bm A_{i, S_2^*}||_2 \le  \sqrt{ks_1}  ||\bm A_{ i^*, \bar{T}}||_2 \le \sqrt{ks_1} ||\bm A_{\hat{S}_1, \hat{S}_2}||_{(k)} \\
	\sqrt{ks_2} \max_{j\in S_2^*}  ||\bm A_{ S_1^*, j}||_2  \le  \sqrt{ks_2} ||\bm A_{\bar{S}, j^*}||_2 \le\sqrt{ks_2}  ||\bm A_{\hat{S}_1, \hat{S}_2}||_{(k)}
	\end{cases}
	,
	\end{align*}
	where the fourth inequality in both lines is from the definitions of $(\bar{T}, i^*)$ and $(\bar{S}, j^*)$  in \eqref{eq_opt} and the last one on both lines is from Claim \ref{claim1}.
	Thus, 	we obtain a  $1/\sqrt{k\min\{s_1,s_2\}}$-approximation ratio.
	\vspace{10pt}
	
	\noindent\textbf{Tightness of the Approximation Ratio.} 	
	A worst-case example can be constructed to show that the approximation ratio of Algorithm~\ref{algo:trun2} is tight.
	\begin{example}\label{eg2}
		For any integers $k>0,c>0, t>0$, suppose that $s_1 = ck$, $s_2 = 2^{t}$ such that $\min\{s_1, s_2\} \ge k$, and $m =(2+ s_2)s_1$,  $n = (2+s_1)s_2$. Then we construct a matrix $\bm A$ as below
	\end{example}
	\begin{align*}
	\bm A = \begin{pmatrix}
	\bm A_{1} & \bm 0_{s_1, s_2} & \bm A_3 \\
	\bm 0_{s_1, s_1\times s_2}  & \bm 0_{s_1, s_2}  & \bm 0_{s_1, s_2} \\
	\bm 0_{s_1\times s_2, s_1\times s_2}  & \bm 0_{s_1\times s_2, s_2}  & \bm A_{2} 
	\end{pmatrix}\in \Re^{m\times n},  
	\end{align*}
	where 
	\begin{align*}
	\bm A_{1}  = \begin{pmatrix}
	\bm 1_{s_2}^{\top} & \bm 0_{s_2}^{\top} & \cdots & \bm 0_{s_2}^{\top}\\
	\bm 0_{s_2}^{\top} & \bm 1_{s_2}^{\top} & \cdots & \bm 0_{s_2}^{\top}\\
	\cdots \\
	\bm 0_{s_2}^{\top} & \bm 0_{s_2}^{\top} & \cdots & \bm 1_{s_2}^{\top}\\
	\end{pmatrix} \in \Re^{s_1, s_1\times s_2}, \bm A_{2} = \begin{pmatrix}
	\bm 1_{s_1} & \bm 0_{s_1} & \cdots & \bm 0_{s_1}\\
	\bm 0_{s_1} & \bm 1_{s_1} & \cdots & \bm 0_{s_1}\\
	\cdots \\
	\bm 0_{s_1} & \bm 0_{s_1} & \cdots & \bm 1_{s_1}
	\end{pmatrix}\in \Re^{s_1\times s_2, s_2} ,
	\end{align*}
	\begin{align*}
	\bm A_{3}  = \sum_{i\in[k]} \bm P_{:,i} \bm H_{:,i}^{\top}\in \Re^{s_1, s_2},   \bm P = \begin{pmatrix}
	\bm 1_{s_1/k} & \bm 0_{s_1/k} & \cdots & \bm 0_{s_1/k}\\
	\bm 0_{s_1/k} & \bm 1_{s_1/k} & \cdots & \bm 0_{s_1/k}\\
	\cdots \\
	\bm 0_{s_1/k} & \bm 0_{s_1/k} & \cdots & \bm 1_{s_1/k}
	\end{pmatrix}\in \Re^{s_1, k} ,
	\end{align*}
	and $\bm H \in \Re^{s_2, s_2}$ is a Hadamard matrix of order $s_2= 2^{t}$.
	
	Note that in Example~\ref{eg2}, the largest absolute value among all the entries in matrix $\bm A$ is one. Next, we check the selection Algorithm~\ref{algo:trun2} as follows: 
	\begin{enumerate}[(a)]
		\item For each $j\in [s_1\times s_2]$, there is only one entry with value 1 in the $j$th column of $\bm A$ and the location of the entry is at $(\lceil j/s_2\rceil,j)$. Therefore, we can select $\hat{S}_1^j = \{\lceil j/s_2\rceil\}\cup [s_1+2, 2s_1]$; for the  row submatrix $\bm A_{\hat{S}_1^j, [n]}$, the largest two-norm among all of its columns is one. Thus, we can select $\hat{S}_2^j = [(\lceil j/s_2\rceil-1)\times s_2+1, \lceil j/s_2\rceil\times s_2]$.  The resulting objective value is equal to $\sqrt{s_2}$.
		\item For each $j\in [s_1\times s_2+1, (s_1+1)\times s_2]$, all the entries in the $j$th column of $\bm A$ are 0. Thus, we can select $\hat{S}_1^j = [s_1+1, 2s_1]$ and $\hat{S}_2^j = [s_1\times s_2+1, s_1\times s_2+ s_2]$. The resulting objective value is equal to 0.
		\item For each $j\in [(s_1+1)\times s_2+1, (s_1+2)\times s_2]$, there is a size-$s_1$ vector with all ones in the column $(\bm A_2)_{:,j-(s_1+1)\times s_2}$. Thus, we can select $\hat{S}_1^j = [2s_1+ (\ell-1)\times s_1+1, 2s_1+ \ell\times s_1]$ where $\ell = j-(s_1+1)\times s_2$ and $\hat{S}_2^j = [j-s_2+1, j]$. The resulting objective value is equal to $\sqrt{s_1}$.
	\end{enumerate}
	Similarly, by switching the row and column selections, we can obtain another $m$ feasible solutions to SSVD \eqref{eq_ssvdcom} and the resulting objective value can be $\sqrt{s_1}$, $\sqrt{s_2}$, or 0.
	%	(a) for $j\in [s_1\times s_2]$, let $\hat{S}_i^i = [(i-1)\times s_2, i\times s_2]$ and  $\hat{T}_1^i =[i, i+s_1-1]$, which returns $\sqrt{s_2}$ objective value; for $i\in [s_1+1, 2s_1]$, the resulting objective value is zero; for $i\in [2s_1+1, m]$, let $\hat{T}_2^i = [s_1\times s_2+2, s_1\times s_2+s_2]\cup \{j\}$, where $(A_2)_{ij}=1$  and  $\hat{T}_1^i =T$ where $(A_2)_{\ell j}=1$ for all $\ell \in T$, which returns $\sqrt{s_1}$ objective value;
	%	(a) for $i\in [s_1]$, let $\hat{T}_2^i = [(i-1)\times s_2, i\times s_2]$ and  $\hat{T}_1^i =[i, i+s_1-1]$, which returns $\sqrt{s_2}$ objective value; for $i\in [s_1+1, 2s_1]$, the resulting objective value is zero; for $i\in [2s_1+1, m]$, let $\hat{T}_2^i = [s_1\times s_2+2, s_1\times s_2+s_2]\cup \{s_2(s_1+1)+ \lceil (i-2s_1)/s_1\rceil \}$, where $A_{ij}=1$  and  $\hat{T}_1^i =T$ where $(A_2)_{\ell j}=1$ for all $\ell \in T$, which returns $\sqrt{s_1}$ objective value;  
	Thus, the best objective value output by the selection Algorithm~\ref{algo:trun2} is $\max\{\sqrt{s_1}, \sqrt{s_2}\}$. However, for Example~\ref{eg2}, the optimal solution to SSVD \eqref{eq_ssvdcom} is the submatrix $\bm A_{3}$ with optimal value 
	\begin{align*}
	||\bm A_{3}||_{(k)} = \bigg|\bigg|\sum_{i\in[k]} \bm P_{:,i} \bm H_{:,i}^{\top}\bigg|\bigg|_{(k)} = \sum_{ i \in [k]} || \bm P_{:,i}||_2 ||\bm H_{:,i}||_2 = k\sqrt{s_1/k}\sqrt{s_2} = \sqrt{ks_1s_2},
	\end{align*}
	where the second equality is due to the fact that  $\{\bm P_{:,i}\}_{i\in [k]}$ and $\{\bm H_{:,i}\}_{i\in [k]}$   are orthogonal vectors, respectively and Observation \ref{obs1}.
	Thus, the approximation ratio of Example~\ref{eg2} is
	\begin{align*}
	\frac{\max\{\sqrt{s_1}, \sqrt{s_2}\} }{\sqrt{ks_1s_2}} = \frac{1}{\sqrt{k \min\{s_1, s_2\}}},
	\end{align*}
	which verifies  the tightness of the approximation ratio when $s_1 \le m/2$ and $s_2\le  n/2$.
	\vspace{10pt}
	
	\noindent\textbf{Deriving the Time Complexity.}  Steps 3 and 4 can be solved by sorting with the time complexity of $m\log(m)$ and $n\log(n)$, respectively. Thus, it takes $O(n(m\log (m)+ n\log(n)) )$ to implement the first for-loop. Similarly, it takes $O(m(m\log (m)+ n\log(n)) )$ to implement the second for-loop.
	
	It has been shown in \cite{shishkin2019fast} that for any $s_1\times s_2$ matrix, the computation of its $k$-truncated SVD can be done in $O(ks_1s_2)$. Finally, there are $m+n$ candidate solutions to be evaluated. This concludes the  time complexity analysis.\qed
\end{proof}

%\begin{proposition} \label{s2_time}
%	The time complexity of the selection Algorithm~\ref{algo:trun2} is $O((m+n)(m\log (m)+ n\log(n)) + (m+n)ks_1s_2)$.
%\end{proposition}
%\begin{proof}
%	Steps 3 and 4 can be solved by sorting with the time complexity of $m\log(m)$ and $n\log(n)$, respectively. Thus, it takes $O(n(m\log (m)+ n\log(n)) )$ to implement the first for-loop. Similarly, it takes $O(m(m\log (m)+ n\log(n)) )$ to implement the second for-loop.
%	
%	It has been shown in \cite{shishkin2019fast} that for any $s_1\times s_2$ matrix, the computation of its $k$-truncated SVD can be done in $O(ks_1s_2)$. Finally, $m+n$ candidate solutions need to be evaluated. This concludes the running time complexity analysis.
%\end{proof}

% \thmstwo*
\noindent\textbf{Selection Algorithm~\ref{algo:trun3}.} In this selection algorithm, we trim the top left- and right- singular vectors of matrix $\bm A$, as presented in Algorithm~\ref{algo:trun3}. Notably, we select the submatrix  by 
specifying a size-$s_1$ subset of rows that corresponds to the $s_1$ largest absolute entries of the top left eigenvector, 
trimming the  largest left eigenvector accordingly to be of support size $s_1$, and then selecting a size-$s_2$ subset of columns that corresponds to the largest absolute entries of the product of the trimmed left eigenvector (augmented with zeros if necessary) with the original data matrix $\bm{A}$. We repeat the same procedure for the top right eigenvector. Since there are two candidate feasible solutions, we choose the one with a larger objective value of SSVD \eqref{eq_ssvdcom}. 

In Theorem~\ref{thms3}, we show that selection Algorithm~\ref{algo:trun3} yields a $\sqrt{s_1s_2}/(k\sqrt{mn})$-approximation ratio for SSVD \eqref{eq_ssvdcom} with time complexity of $O(m\log(m)+n\log(n)+mn)$. Though Algorithm~\ref{algo:trun3} has the most efficient implementation among the three selection algorithms, its performance is the worst as $k$ increases. In particular, this phenomenon is also observed in our numerical study.

\begin{algorithm}[htb] 
	\caption{Selection Algorithm for SSVD \eqref{eq_ssvdcom}} \label{algo:trun3}
	\begin{algorithmic}[1]
		\State \textbf{Input:} A matrix $\bm A \in \Re^{m \times n}$,  integers   $s_1\in [m]$, $s_2\in [n]$, and  $1\le k \le \min\{s_1, s_2\}$
		
		\State Compute  left- and right-singular vectors  $(\bm u_1, \bm v_1)$ corresponding to the largest singular value of $\bm A$
		
		\State Denote by $ \hat{S}_1$ the indices of $s_1$ largest absolute entries in $\bm u_1$
		\State Denote by $ \hat{S}_2$ the indices of $s_2$ largest absolute entries in $(\bm u_1)_{\hat{S}_1}^{\top} \bm A_{\hat{S}_1, [n]} $
		
		\State Denote by $ \hat{T}_2$ the indices  of $s_2$ largest absolute entries in $\bm v_1$
		\State Denote by $ \hat{T}_1$ the indices of $s_1$ largest absolute entries in $ \bm A_{[m], \hat{T}_2} (\bm v_1)_{\hat{T}_2} $
		
		% 		\State Compute the support
		% 		$$ \hat{S}_2 \in \argmax_{ S\subseteq [n]}  \bigg\{\sum_{j \in S}  |(\hat{\bm u}^{\top} \bm A)_{i}|: |S|=s_2\bigg\}.$$
		
		\State	\textbf{Output:} The better solution of $(\hat{S}_1, \hat{S}_2)$ and $(\hat{T}_1, \hat{T}_2)$
	\end{algorithmic}
\end{algorithm}
\vspace{-0.15in}

\begin{restatable}{theorem}{thmsthree} \label{thms3}
	The selection Algorithm~\ref{algo:trun3} yields a $\sqrt{s_1s_2}/(k\sqrt{mn}) $ approximation ratio for SSVD \eqref{eq_ssvdcom}, i.e., suppose that the output of Algorithm~\ref{algo:trun3} is $(\hat{S}_1, \hat{S}_2)$, then
	\begin{align*}
	||\bm A_{\hat{S}_1, \hat{S}_2}||_{(k)} \ge \frac{\sqrt{s_1 s_2}}{k\sqrt{mn} } w^*,
	\end{align*}
	the ratio is tight {when $s_1\le m/2$ and $s_2\le n/2$}, and the time complexity of the selection Algorithm~\ref{algo:trun3} is $O(m\log (m)+ n\log(n) + mn)$.
\end{restatable}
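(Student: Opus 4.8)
The statement has three parts — the approximation ratio, its tightness, and the running time — and I would treat them in that order, using only elementary facts about singular values plus the construction of a worst-case instance.

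\textbf{(1) The ratio.} First I record the cheap upper bound $w^*\le k\,\sigma_1(\bm A)$: for an optimal submatrix $\bm A_{S_1^*,S_2^*}$ one has $\|\bm A_{S_1^*,S_2^*}\|_{(k)}=\sum_{i\in[k]}\sigma_i(\bm A_{S_1^*,S_2^*})\le k\,\sigma_1(\bm A_{S_1^*,S_2^*})\le k\,\sigma_1(\bm A)$, the last inequality because $\bm u^\top\bm A\bm v\le\sigma_1(\bm A)\|\bm u\|_2\|\bm v\|_2$ for every $\bm u,\bm v$. It then suffices to lower bound $\|\bm A_{\hat S_1,\hat S_2}\|_{(k)}$ for the branch built from the top singular pair $(\bm u_1,\bm v_1)$, since the $(\hat T_1,\hat T_2)$-branch and the ``keep the better of the two'' step can only help. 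Write $\sigma:=\sigma_1(\bm A)$, let $\tilde{\bm u}\in\Re^m$ be $\bm u_1$ with the coordinates outside $\hat S_1$ zeroed, and let $\bm c:=\tilde{\bm u}^\top\bm A=(\bm u_1)_{\hat S_1}^\top\bm A_{\hat S_1,[n]}\in\Re^{n}$ be the vector whose largest-in-modulus entries define $\hat S_2$ in the algorithm. The crux is a chain of elementary estimates: plugging the unit vector $(\bm u_1)_{\hat S_1}/\|(\bm u_1)_{\hat S_1}\|_2$ into the variational formula for the top singular value gives $\sigma_1(\bm A_{\hat S_1,\hat S_2})\ge\|(\bm u_1)_{\hat S_1}^\top\bm A_{\hat S_1,\hat S_2}\|_2/\|(\bm u_1)_{\hat S_1}\|_2=\|\bm c_{\hat S_2}\|_2/\|(\bm u_1)_{\hat S_1}\|_2$; since $\hat S_2$ collects the $s_2$ largest entries of $\bm c$ in modulus, $\|\bm c_{\hat S_2}\|_2^2\ge(s_2/n)\|\bm c\|_2^2$; using $\bm A\bm v_1=\sigma\bm u_1$, $\|\bm v_1\|_2=1$ and Cauchy--Schwarz, $\|\bm c\|_2\ge\tilde{\bm u}^\top\bm A\bm v_1=\sigma\langle\tilde{\bm u},\bm u_1\rangle=\sigma\|(\bm u_1)_{\hat S_1}\|_2^2$; and since $\hat S_1$ collects the $s_1$ largest coordinates in modulus of the unit vector $\bm u_1$, their squares sum to at least $s_1/m$, i.e.\ $\|(\bm u_1)_{\hat S_1}\|_2^2\ge s_1/m$. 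Stringing these together, $\|\bm A_{\hat S_1,\hat S_2}\|_{(k)}\ge\sigma_1(\bm A_{\hat S_1,\hat S_2})\ge\sqrt{s_2/n}\,\sigma\,\|(\bm u_1)_{\hat S_1}\|_2\ge\sqrt{s_1s_2/(mn)}\,\sigma$, and dividing by $w^*\le k\sigma$ yields the claimed ratio.

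\textbf{(2) Tightness.} Here I expect the real work. I would build a ``decoy plus hidden optimum'' instance with $m\ge 2s_1$, $n\ge 2s_2$: fix $\sigma>0$, a row set $R\subseteq[m]$ with $|R|=s_1$ and a column set $C\subseteq[n]$ with $|C|=s_2$, and set $\bm A=\bm D+\widetilde{\bm B}$, where $\bm D=\tfrac{\sigma}{\sqrt{mn}}\bm 1_{m,n}$ is a uniform rank-one decoy with $\sigma_1(\bm D)=\sigma$ and singular vectors $\bm 1_m/\sqrt m,\bm 1_n/\sqrt n$, and $\widetilde{\bm B}$ is the zero-padding to $\Re^{m\times n}$, on $R\times C$, of $\bm B=\sigma\sum_{i\in[k]}\bm p_i\bm q_i^\top$ with $\{\bm p_i\}\subseteq\Re^{s_1}$ and $\{\bm q_i\}\subseteq\Re^{s_2}$ orthonormal, each $\bm p_i\perp\bm 1_{s_1}$ and $\bm q_i\perp\bm 1_{s_2}$ — so $\bm B$ has $k$ singular values equal to $\sigma$. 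The orthogonality to the all-ones vectors kills all cross terms in $\bm A\bm A^\top$ and leaves the two pieces with orthogonal ranges, so $\sigma_1(\bm A)=\sigma$ and $(\bm 1_m/\sqrt m,\bm 1_n/\sqrt n,\sigma)$ is still a top singular triple; hence $w^*\le k\sigma$, while $\bm A_{R,C}=\tfrac{\sigma}{\sqrt{mn}}\bm 1_{s_1,s_2}+\bm B$ (again with vanishing cross terms, and $\tfrac{\sqrt{s_1s_2}}{\sqrt{mn}}\sigma<\sigma$) has all of its top $k$ singular values equal to $\sigma$, so in fact $w^*=k\sigma$. On the algorithm side, all entries of $\bm u_1=\bm 1_m/\sqrt m$ are equal, so by a worst-case tie-break $\hat S_1$ can be taken inside $[m]\setminus R$ (possible precisely because $s_1\le m/2$); then $\bm A_{\hat S_1,[n]}=\bm D_{\hat S_1,[n]}$ is rank one with constant columns, so $\hat S_2$ can be taken inside $[n]\setminus C$ (using $s_2\le n/2$), giving $\bm A_{\hat S_1,\hat S_2}=\tfrac{\sigma}{\sqrt{mn}}\bm 1_{s_1,s_2}$ with $\|\bm A_{\hat S_1,\hat S_2}\|_{(k)}=\tfrac{\sigma\sqrt{s_1s_2}}{\sqrt{mn}}$; the $\hat T$-branch is symmetric and gives the same value, so the output ratio is exactly $\tfrac{\sqrt{s_1s_2}}{k\sqrt{mn}}$. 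The two obstacles I anticipate are: (a) confirming such a $\bm B$ exists — it does for $k<\min\{s_1,s_2\}$, e.g.\ with $\bm p_i,\bm q_i$ taken from the non-constant columns of Sylvester Hadamard matrices when $s_1,s_2$ are powers of two, but the boundary case $k=\min\{s_1,s_2\}$ (where the row/column space of $\bm B$ cannot avoid $\bm 1$) needs a separate tweak or a mild extra assumption; and (b) being explicit that ``tight'' is meant in the worst-case sense over the non-unique choice of top singular vector and over ties in the entry-selection steps.

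\textbf{(3) Time complexity.} I would tally: computing the top singular pair $(\bm u_1,\bm v_1)$ in $O(mn)$; the four ``largest-modulus entries'' selections by sorting, $O(m\log m+n\log n)$; forming $(\bm u_1)_{\hat S_1}^\top\bm A_{\hat S_1,[n]}$ and $\bm A_{[m],\hat T_2}(\bm v_1)_{\hat T_2}$ in $O(s_1n+ms_2)=O(mn)$; and evaluating the Ky Fan $k$-norm of the two candidate $s_1\times s_2$ submatrices in $O(ks_1s_2)$, which is subsumed by the preceding terms. Summing gives $O(m\log m+n\log n+mn)$.
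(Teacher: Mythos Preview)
Your proposal is correct and mirrors the paper's proof in all three parts: the same chain of inequalities for the ratio (truncate $\bm u_1$ to $\hat S_1$, capture at least an $s_1/m$ fraction of its mass, push through $\bm A$, repeat on the column side, and compare to $w^*\le k\sigma_1(\bm A)$), the same ``flat decoy singular vector plus hidden orthogonal rank-$k$ block'' construction for tightness, and the same running-time tally. For tightness the paper instantiates the construction with $m=2s_1$, $n=2s_2$ and Hadamard columns, explicitly assumes $\min\{s_1,s_2\}\ge k+1$, and likewise relies on a worst-case choice of the non-unique top singular pair and tie-breaking in the entry selection---so both of your flagged obstacles (a) and (b) are present in the paper's argument as well and need not concern you further.
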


\begin{proof}
	First, for the output $(\hat{S}_1, \hat{S}_2)$ from Algorithm~\ref{algo:trun3}, let us define the vectors $\hat{\bm u}_1 \in \Re^m$ and $\hat{\bm v}_1 \in \Re^n$ as follows
	\begin{align*}
	(\hat{u}_1)_i = \begin{cases}
	(u_1)_i, & \text{if } i \in \hat{S}_1\\
	0, & \text{otherwise} 
	\end{cases}  \forall i\in [m], \hat{\bm u}_1 = \frac{\hat{\bm u}_1}{||\hat{\bm u}_1||_2},   	(\hat{v}_1)_i = \begin{cases}
	(\hat{\bm u}_1^{\top} \bm A)_{i}, & \text{if } j \in \hat{S}_2\\
	0, & \text{otherwise} ,
	\end{cases} \forall j\in [n], \hat{\bm v}_1 = \frac{\hat{\bm v}_1}{||\hat{\bm v}_1||_2}.
	\end{align*}
	
	Then we have
	\begin{align*}
	\sqrt{\frac{s_1}{m}} \sigma_{1}(\bm A) = 	\sqrt{\frac{s_1}{m}} \sigma_{1}(\bm A) ||\bm u_1||_2 \le \sigma_{1}(\bm A) \hat{\bm u}_1^{\top} \bm u_1 =   \hat{\bm u}_1^{\top} \bm A \bm v_1 \le || \hat{\bm u}_1^{\top} \bm A||_2 \le \sqrt{\frac{n}{s_2}} \hat{\bm u}_1^{\top} \bm A \hat{\bm v}_1,
	\end{align*}
	where the first inequality is because $\hat{\bm u}_1^{\top} \bm u_1 = \sqrt{\sum_{i\in \hat{S}_1}(u_1)_i^2} \ge \sqrt{s_1/m}\sqrt{\sum_{i\in [m]}(u_1)_i^2}$,  the second equality is from $\bm A \bm v_1 = \sigma_{1}(\bm A)\bm u_1$, the second inequality is from Cauchy-Schwarz inequality and $||\bm v_1||_2 =1$, and the last one follows the similar reasoning as the first inequality.
	
	Now we have
	\begin{align*}
	w^* \le ||\bm A||_{(k)} \le k\sigma_{1}(\bm A ) \le k \frac{\sqrt{mn}}{\sqrt{s_1s_2}} \hat{\bm u}_1^{\top} \bm A \hat{\bm v}_1 = k \frac{\sqrt{mn}}{\sqrt{s_1s_2}} (\hat{\bm u}_1)_{\hat{S}_1}^{\top} \bm A_{\hat{S}_1, \hat{S}_2} (\hat{\bm v}_1)_{\hat{S}_2} \le k \frac{\sqrt{mn}}{\sqrt{s_1s_2}} \sigma_{1}(\bm A_{\hat{S}_1, \hat{S}_2}),
	\end{align*}
	where the first equality is because $ (\hat{ u}_1)_i = 0$ for all $i\in [m]\setminus\hat{S}_1$ and $ (\hat{ v}_1)_j = 0$ for all $j\in [n]\setminus\hat{S}_2$ and the last inequality is due to Part (i) in Lemma~\ref{lem:svd} with matrix $\bm A_{\hat{S}_1, \hat{S}_2}$ and  $k=1$. % since $((\hat{\bm u}_1)_{\hat{S}_1}, (\hat{\bm v}_1)_{\hat{S}_2})$ is a feasible solution to the left-hand maximization problem.
	
	Similarly, for the output $(\hat{T}_1, \hat{T}_2)$ from Algorithm~\ref{algo:trun3}, we also have
	\begin{align*}
	w^* \le k \frac{\sqrt{mn}}{\sqrt{s_1s_2}} \sigma_{1}(\bm A_{\hat{T}_1, \hat{T}_2}),
	\end{align*}
	This concludes that the approximation ratio of selection Algorithm~\ref{algo:trun3} is equal to $\sqrt{s_1s_2}/(k\sqrt{mn}) $. %directly follows from the inequality that $ \sigma_{1}(\bm A_{\hat{S}_1, \hat{S}_2}) \le \max\{|| \bm A_{\hat{S}_1, \hat{S}_2}||_{(k)}, ||\bm A_{\hat{T}_1, \hat{T}_2}||_{(k)}\} $.
	
	For	the following example,  the output of the selection Algorithm~\ref{algo:trun3} achieves the ratio of $\sqrt{s_1s_2}/(k\sqrt{mn})$.
	
	\begin{example} \label{eg3}
		For any positive  integers $k>0, t_1>0, t_2>0$, suppose that $s_1 = 2^{t_1}$, $s_1 = 2^{t_2}$ such that $\min\{s_1, s_2\}\ge k+1$, and $m=2s_1$, $n=2s_2$, then the matrix $\bm A \in \Re^{n\times m}$ is constructed by
		\begin{align*}
		\bm A =  \sum_{i \in [k]}  \frac{\sqrt{mn}}{\sqrt{s_1s_2}} \begin{pmatrix}
		\bm 0_{s_1} \\
		\bm H_{:,i}
		\end{pmatrix}  \begin{pmatrix}
		\bm 0_{s_2} \\
		\bm F_{:,i}
		\end{pmatrix}^{\top} + \begin{pmatrix}
		\bm 1_{s_1} \\
		\bm H_{:,k+1}
		\end{pmatrix}  \begin{pmatrix}
		\bm 1_{s_2} \\
		\bm F_{:,k+1}
		\end{pmatrix}^{\top} , 
		\end{align*}
		where $\bm H \in \Re^{s_1, s_1}$ and $\bm F \in \Re^{s_2, s_2}$ are Hadamard matrices of order $2^{t_1}$ and order $2^{t_2}$, respectively.
	\end{example}
	In Example~\ref{eg3}, by the construction and the fact that the columns of a Hadamard matrix are orthogonal, the $(k+1)$ left- and right- vectors defining matrix $\bm A$ are orthogonal, respectively. It follows that all the non-zero singular values of $\bm A$ are equal to $\sqrt{mn}$ as $m=2s_1,n=2s_2$, and a pair of top singular vectors can be 
	\begin{align*}
	\bm u_1 = \frac{1}{\sqrt{m}} \begin{pmatrix}
	\bm 1_{s_1} \\
	\bm H_{:,k+1}
	\end{pmatrix}, 
	\bm v_1 = \frac{1}{\sqrt{n}}  \begin{pmatrix}
	\bm 1_{s_2} \\
	\bm F_{:,k+1}
	\end{pmatrix}.
	\end{align*}
	Given  vectors $\bm u_1$ and $\bm v_1$,  Algorithm~\ref{algo:trun3} selects $\hat{S}_1 = [s_1]$ at Step 3 and $\hat{T}_2 = [s_2]$ at Step 5, respectively. 
	
	Then the submatrix $\bm A_{\hat{S}_1, [n]}$ and submatrix $\bm A_{ [m], \hat{T}_2}$ can be written as
	\begin{align*}
	\bm A_{\hat{S}_1, [n]} =
	\begin{pmatrix}
	\bm 1_{s_1, s_2} & \bm 1_{s_1} \bm F_{:,k+1}^{\top}
	\end{pmatrix}, 
	\bm A_{ [m], \hat{T}_2}=
	\begin{pmatrix}
	\bm 1_{s_1, s_2} \\ \bm H_{:,k+1} \bm 1_{s_2}^{\top}
	\end{pmatrix}.
	\end{align*}
	
	In this way, the $s_2$ largest absolute entries in $(\bm u_1)_{\hat{S}_1}^{\top} \bm A_{\hat{S}_1, [n]} $ can be achieved by the $s_2$ first entries so $\hat{S}_2 =[s_2]$. Similarly, Algorithm~\ref{algo:trun3} chooses $\hat{T}_1 =[s_1]$ at Step 6.
	
	For the two solutions $(\hat{S}_1, \hat{S}_2)$ and $(\hat{T}_1, \hat{T}_2)$, both objective values of SSVD \eqref{eq_ssvdcom} are equal to $||\bm A_{[s_1], [s_2]}||_{(k)} $ $= ||\bm 1_{s_1, s_2}||_{(k)} = \sqrt{s_1s_2}$.
	
	However, the true optimal submatrix of SSVD \eqref{eq_ssvdcom} is $\bm A_{[s_1+1, 2s_1], [s_2+1, 2s_2]} $ with the optimal value
	\begin{align*}
	||\bm A_{[s_1+1, 2s_1], [s_2+1, 2s_2]} ||_{(k)} = \bigg|\bigg|\sum_{ i \in [k]} \frac{\sqrt{mn}}{\sqrt{s_1s_2}} \bm H_{:,i} \bm F_{:,i}^{\top} + \bm H_{:,k+1} \bm F_{:,k+1}^{\top} \bigg| \bigg|_{(k)} = k\sqrt{mn}.
	\end{align*}
	Thus, for Example~\ref{eg3}, the approximation ratio of selection Algorithm~\ref{algo:trun3} exactly is $\sqrt{s_1s_2}/(k\sqrt{mn})$.
	
	Finally, the top singular vectors can be obtained using $1$-truncated SVD of matrix $\bm A$, which takes $O(mn)$ according to \cite{shishkin2019fast}. Next, Steps 3 and 4 can be solved by sorting with the time complexity of $m\log(m)$ and $n\log(n)$, respectively. So do Steps 5 and 6. \qed
\end{proof}

It is worthy of mentioning that: (i) the approximation ratios of selection Algorithm~\ref{algo:trun1}, Algorithm~\ref{algo:trun2}, and  Algorithm~\ref{algo:trun3} match the best-known approximation ratio of rank-one SPCA and rank-one SSVD (see, e.g.,  \cite{chan2016approximability,li2020exact});
%(ii) the algorithms from \citet{chan2016approximability} and \citet{li2020exact} cannot be directly applied to rank-$k$ SSVD \eqref{eq_ssvdcom} here;
%Surprisingly, in Theorem~\ref{thms1}, the ratio of selection Algorithm~\ref{algo:trun1} is independent of $k$, which means that this algorithm remains uncompromised  when solving the cases of $k>1$. Moreover, the selection Algorithm~\ref{algo:trun2} just yields a mild factor of $1/\sqrt{k}$;  
and (ii) the approximation ratios of the three selection algorithms satisfy Algorithm~\ref{algo:trun1} $\geq$  Algorithm~\ref{algo:trun2} $\geq$   Algorithm~\ref{algo:trun3}, while their scalabilities are in the reverse order. This indicates a clear trade-off between optimality and time complexity when employing them to solve practical problems.

\subsection{Searching Algorithms} \label{subsec:search}
This subsection studies the widely-used greedy and local search algorithms  (see, e.g., \cite{dey2020solving, li2020best}).
We will show that the two searching algorithms have the same approximation ratio for solving SSVD \eqref{eq_ssvdcom}, and the ratio is tight for the cases of $s_1 \le  m/2$ and $s_2 \le n/2$. %Note that in SSVD \eqref{eq_ssvdcom}, there are two combinatorial set variables so to relieve the computational burden, the two set variables will be separately searched  in our proposed algorithms.  

\noindent\textbf{Greedy Algorithm~\ref{algo:svd_greedy}.} The greedy  algorithm for SSVD \eqref{eq_ssvdcom} proceeds as follows: At each iteration, given two subsets denoting the selected rows and columns, respectively, the algorithm first searches for a new row from the unchosen set to maximize the  objective value of the augmented submatrix and adds the optimal row to the row subset, then finds a new column to update the column subset until the two subsets attaining sizes $s_1$ and $s_2$, respectively. 
The implementation details can be found in Algorithm~\ref{algo:svd_greedy}. 

Theorem~\ref{thmg} shows a $1/\sqrt{ks_1s_2}$-approximation ratio and a $O(\max\{s_1,s_2\}(m+n)ks_1s_2)$-time complexity when employing the greedy Algorithm~\ref{algo:svd_greedy} to solve SSVD \eqref{eq_ssvdcom}. 
%	and demonstrates that the ratio is tight. %When $k=1$, this ratio recovers that of greedy search algorithm for rank-one SSVD proposed by \citet{li2020exact}. 
%The proof of Theorem~\ref{thmg} is presented below.

%\yongchun{Do i need to explain $\min\{\ell, k\}$ at step 6?} 
\begin{algorithm}[htb]
	\caption{Greedy Algorithm for SSVD \eqref{eq_ssvdcom}}
	\label{algo:svd_greedy}
	\begin{algorithmic}[1]
		\State \textbf{Input:} A matrix $\bm A \in \Re^{m \times n}$,  integers   $s_1\in [m]$, $s_2\in [n]$, and  $1\le k \le \min\{s_1, s_2\}$
		%		\State WLOG, suppose $k_1 \le k_2$
		%		\State Let $\bm{A}=\bm{C}^{\top}\bm{C}$ denote its Cholesky factorization where $\bm{C} \in \Re^{d\times n}$
		%		\State Let $\bm{c}_i \in \Re^d$ denote the $i$-th column vector of matrix $\bm{C}$ for each $i \in [n]$

		\State Compute $(i^*, j^*) \in \argmax_{i \in[m], j \in[n] }\left\{|A_{i, j} |\right\}$
		\State  Define subsets $\hat{S}_1 := \{i^*\}$ and $\hat{S}_2 := \{j^*\}$

		\For{$\ell = 2, \cdots, \max\{s_1,s_2\}$}
		\If{$\ell \le \min\{s_1,s_2\}$}
		
		\State 
		$i^* \in \argmax_{i \in[m]\setminus \hat{S}_1  }
		||\bm A_{\hat{S}_1 \cup\{i\}, \hat{S}_2  }||_{(\min\{\ell, k\})}
		$
		\State Update  $\hat{S}_1 := \hat{S}_1\cup \{i^*\}$ 
		
		\State $j^* \in \argmax_{j \in[n]\setminus \hat{S}_2 }||\bm A_{\hat{S}_1 , \hat{S}_2 \cup\{j\}}||_{(\min\{\ell, k\})}$ 
		\State  Update $\hat{S}_2:= \hat{S}_2 \cup \{ j^*\}$
		
		\ElsIf{$s_1\leq s_2$}
		\State 
		$j^* \in \argmax_{j \in[n]\setminus \hat{S}_2 }||\bm A_{\hat{S}_1 , \hat{S}_2 \cup\{j\}}||_{(k)}$ 
		\State  Update $\hat{S}_2:= \hat{S}_2 \cup \{ j^*\}$
		\Else
		\State 
		$i^* \in \argmax_{i \in[m]\setminus \hat{S}_1  }
		||\bm A_{\hat{S}_1 \cup\{i\}, \hat{S}_2  }||_{(k)}
		$
		\State  Update  $\hat{S}_1 := \hat{S}_1\cup \{i^*\}$
		\EndIf
		\EndFor
		
		\State  \textbf{Output:} $\hat{S}_1 , \hat{S}_2 $%$(\bar{x},\bar{w})$ 
	\end{algorithmic}
\end{algorithm}

\begin{restatable}{theorem}{thmg} \label{thmg} 
	The greedy Algorithm~\ref{algo:svd_greedy} yields a $1/\sqrt{ks_1s_2}$-approximation ratio for SSVD \eqref{eq_ssvdcom}, i.e., suppose that the output of Algorithm~\ref{algo:svd_greedy} is $(\hat{S}_1, \hat{S}_2)$, then
	\begin{align*}
	||\bm A_{\hat{S}_1, \hat{S}_2}||_{(k)} \ge \frac{1}{\sqrt{ks_1s_2}}  w^*,
	\end{align*}
	the ratio is tight {when $s_1\le m/2$ and $s_2\le n/2$}, and the time complexity of the greedy Algorithm~\ref{algo:svd_greedy} is $O(\max\{s_1,s_2\} \\(m+n)ks_1s_2  )$.
\end{restatable}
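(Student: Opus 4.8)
The plan is to establish the three assertions — the ratio, its tightness, and the time complexity — separately, reusing the Frobenius-norm argument from Theorem~\ref{thms2} for the ratio and building a fresh worst-case instance for tightness.

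\noindent\textbf{Approximation ratio.} First I would observe that the greedy output dominates the largest absolute entry of $\bm A$: since the $1\times 1$ matrix $[A_{i^*j^*}]$ with $|A_{i^*j^*}| = \max_{i\in[m],j\in[n]}|A_{ij}|$ chosen at Step~2 is a submatrix of $\bm A_{\hat{S}_1,\hat{S}_2}$, and deleting rows or columns cannot increase the largest singular value (min--max characterization / eigenvalue interlacing), we get $||\bm A_{\hat{S}_1,\hat{S}_2}||_{(k)} \ge \sigma_1(\bm A_{\hat{S}_1,\hat{S}_2}) \ge |A_{i^*j^*}| = \max_{i,j}|A_{ij}|$. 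Then, letting $(S_1^*,S_2^*)$ be optimal for SSVD \eqref{eq_ssvdcom}, Cauchy--Schwarz and the definition of the Frobenius norm give
\begin{align*}
w^* = \sum_{i\in[k]}\sigma_i(\bm A_{S_1^*,S_2^*}) &\le \sqrt{k}\,||\bm A_{S_1^*,S_2^*}||_F = \sqrt{k}\,\Big(\sum_{i\in S_1^*}\sum_{j\in S_2^*}A_{ij}^2\Big)^{1/2} \\
&\le \sqrt{k s_1 s_2}\,\max_{i,j}|A_{ij}| \le \sqrt{k s_1 s_2}\,||\bm A_{\hat{S}_1,\hat{S}_2}||_{(k)},
\end{align*}
which is exactly the claimed $1/\sqrt{k s_1 s_2}$ bound. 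Note the adaptive refinement of the greedy only improves the output and is not needed for the worst-case ratio.

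\noindent\textbf{Tightness.} Here I would exhibit, for parameters with $s_1\le m/2$, $s_2\le n/2$ (and, as in Example~\ref{eg2}, $s_2$ a power of two and $k\mid s_1$), a matrix $\bm A$ on which even the adaptive greedy cannot beat $1/\sqrt{k s_1 s_2}$. Take $\bm A$ block-structured with: an isolated entry $A_{1,1}=1$ whose row and column are otherwise zero; the $s_1\times s_2$ block $\bm A_3 = \sum_{i\in[k]}\bm P_{:,i}\bm H_{:,i}^{\top}$ of Example~\ref{eg2} placed in $s_1$ rows and $s_2$ columns disjoint from row $1$ and column $1$; and zeros elsewhere. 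Because $s_1\le m/2$ and $s_2\le n/2$, the zero region still contains at least $s_1-1$ rows and $s_2-1$ columns. Since every entry of $\bm A$ has magnitude at most $1$, Step~2 of Algorithm~\ref{algo:svd_greedy} may pick $(i^*,j^*)=(1,1)$; thereafter, maintaining the invariant that no row and no column of the $\bm A_3$-block has yet been chosen, every candidate row (resp.\ column) — whether from the zero region or from the $\bm A_3$-block — leaves the current submatrix equal to $[1]$ bordered by zeros, so all candidates tie and the algorithm may keep inserting spare rows/columns from the zero region, terminating with $||\bm A_{\hat{S}_1,\hat{S}_2}||_{(k)}=1$. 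On the other hand, by Observation~\ref{obs1} and the mutual orthogonality of $\{\bm P_{:,i}\}_{i\in[k]}$ and of $\{\bm H_{:,i}\}_{i\in[k]}$, $w^*\ge ||\bm A_3||_{(k)} = \sqrt{k s_1 s_2}$; combined with the bound $w^*\le \sqrt{k s_1 s_2}\,||\bm A_{\hat{S}_1,\hat{S}_2}||_{(k)} = \sqrt{k s_1 s_2}$ just proved, the ratio $1/\sqrt{k s_1 s_2}$ is attained with equality.

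\noindent\textbf{Time complexity and main obstacle.} The outer loop runs $\max\{s_1,s_2\}-1$ times; each iteration performs an $\argmax$ over at most $m$ rows and at most $n$ columns, where each candidate evaluation is the Ky Fan $(\le k)$-norm of a submatrix of size at most $s_1\times s_2$, computable in $O(k s_1 s_2)$ via the fast truncated SVD of \cite{shishkin2019fast}. This gives $O((m+n)k s_1 s_2)$ per iteration and $O(\max\{s_1,s_2\}(m+n)k s_1 s_2)$ overall, dominating the $O(mn)$ cost of the initial $\argmax_{i,j}|A_{ij}|$. The ratio and complexity are routine; the delicate part is the tightness instance. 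Unlike the non-adaptive selection Algorithm~\ref{algo:trun2}, the greedy can assemble Ky Fan mass from scattered entries, so the worst-case matrix must keep the high-value block \emph{both} row- and column-disjoint from the $1\times1$ trap and leave enough zero padding — which is precisely where the hypotheses $s_1\le m/2$, $s_2\le n/2$ enter — so that adversarial tie-breaking can confine the entire run to the trap.
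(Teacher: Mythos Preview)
Your proposal is correct and essentially identical to the paper's own proof: both derive the ratio via $w^* \le \sqrt{k}\,\|\bm A_{S_1^*,S_2^*}\|_F \le \sqrt{ks_1s_2}\max_{i,j}|A_{ij}| \le \sqrt{ks_1s_2}\,\|\bm A_{\hat S_1,\hat S_2}\|_{(k)}$, establish tightness with the same block matrix (an isolated unit entry at $(1,1)$, the block $\bm A_3$ of Example~\ref{eg2} in disjoint rows/columns, zeros elsewhere --- this is precisely the paper's Example~4 with $m=2s_1$, $n=2s_2$), and count the complexity the same way using the $O(ks_1s_2)$ truncated SVD of \cite{shishkin2019fast}. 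The only cosmetic difference is that the paper cites the Cauchy--Schwarz step back to the proof of Theorem~\ref{thms1} rather than Theorem~\ref{thms2}, and your tie-breaking discussion is slightly more explicit than the paper's.
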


\begin{proof}
	Suppose that $(S_1^*, S_2^*)$ is an optimal solution to SSVD \eqref{eq_ssvdcom}. According to the last two equalities and inequalities in the proof of Theorem~\ref{thms1}, we have
	\begin{align*}
	w^* = || \bm A_{S_1^*, S_2^*}||_{(k)} \le \sqrt{k}|| \bm A_{S_1^*, S_2^*}||_F =\sqrt{k} \sqrt{\sum_{i\in S_1^*} \sum_{j\in S_2^*} A_{ij}^2}  \le \sqrt{k}\sqrt{s_1s_2} \max_{i\in [m],j\in [n]} |A_{ij}|,
	\end{align*}
	where the first inequality can be found in the proof of Theorem~\ref{thms1},  
	the right-hand side is upper bounded by the output value from greedy Algorithm~\ref{algo:svd_greedy}, and thus we obtain the $1/\sqrt{ks_1s_2}$-approximation ratio. The proof of its tightness is based on the example below.
	
	\begin{example} \label{eg4}
		For any positive integers $k>0, t>0$, suppose that $s_1 = ck$, $s_2 = 2^{t}$ such that $\min\{s_1, s_2\} \ge k$, and $m= 2s_1$, $n=2s_2$. Let us construct matrix $\bm A \in \Re^{m \times n}$ as
		\begin{align*}
		\bm A = \begin{pmatrix}
		1 & \bm 0_{s_2-1}^{\top} & \bm 0_{s_2}^{\top}\\	
		\bm 0_{s_1-1} & \bm 0_{s_1-1, s_2-1} & \bm 0_{s_1-1, s_2}\\
		\bm 0_{s_1} & \bm 0_{s_1, s_2-1}& \bm A_3 \in \Re^{s_1, s_2}\\
		\end{pmatrix}, 
		\end{align*}
		where matrix $\bm A_3$ is defined in Example~\ref{eg2}.
	\end{example}
	
	For Example~\ref{eg4}, the greedy Algorithm~\ref{algo:svd_greedy} starts with $\hat{S}_1 =\{1\}$ and $\hat{S}_2 =\{1\}$ since 1 is the largest absolute entry in matrix $\bm A$. Then at each iteration $\ell$, we update $\hat{S}_1 := \hat{S}_1\cup \{i^*\}$ or $\hat{S}_2 := \hat{S}_2\cup \{j^*\}$ or both depending on the values of $s_1$ and $s_2$. Hence, in Example~\ref{eg4}, the greedy Algorithm~\ref{algo:svd_greedy} outputs the objective value of SSVD \eqref{eq_ssvdcom} as
	\begin{align*}
	||\bm A_{\hat{S}_1, \hat{S}_2}||_{(k)} = ||\bm A_{[s_1], [s_2]}||_{(k)} = 1.
	\end{align*}
	On the other hand, the true optimal value of SSVD \eqref{eq_ssvdcom}  is $||\bm A_3||_{(k)} = \sqrt{ks_1s_2}$. This proves the tightness when $s_1 \le m/2$ and $s_2\le  n/2$.
	
	As for the time complexity, it is known that at most $\max\{s_1, s_2\}$ iterations are required in the for-loop of the greedy Algorithm~\ref{algo:svd_greedy}. At each iteration, solving the two maximization problems is equivalent to computing $k$-truncated SVD of a submatrix of size no larger than $s_1 \times s_2$ at most $(m+n)$ times. According to \cite{shishkin2019fast}, the time complexity of solving $k$-truncated SVD of a size $s_1\times s_2$ matrix is $O(ks_1s_2)$. Hence,  each iteration  takes $O((m+n)ks_1s_2)$ operations. This completes the proof of time complexity. \qed
\end{proof}

\noindent\textbf{Local Search Algorithm~\ref{algo:svd_localsearch}.} The local search algorithm, one of the well-known improved heuristic methods, aims to find a better solution by exploring neighborhoods. Specifically, the local search algorithm for SSVD \eqref{eq_ssvdcom} proceeds as follows: (i) initialize a feasible solution $(\hat{S}_1, \hat{S}_2)$ with the output of the greedy  Algorithm~\ref{algo:svd_greedy}, (ii) at each iteration, first swap an element in $\hat{S}_1$ with an element in $[m]\setminus \hat{S}_1$ and check if such a movement increases the objective value or not- if YES, then accept the swap, and otherwise continue, and (iii) explore the neighborhood solutions by swapping elements between $\hat{S}_2$ and $[n]\setminus\hat{S}_2$. By doing so, the two subsets $\hat{S}_1, \hat{S}_2$ will be updated separately. More details are presented in Algorithm~\ref{algo:svd_localsearch}. Surprisingly, Theorem~\ref{thmls} shows that the local search Algorithm~\ref{algo:svd_localsearch} also yields a tight $1/\sqrt{ks_1s_2}$-approximation ratio. That is, in the worst-case, the local search Algorithm~\ref{algo:svd_localsearch} might not be able to improve the performance of the greedy Algorithm~\ref{algo:svd_greedy} when $s_1\le m/2$ and $s_2\le n/2$, as illustrated in our numerical study.

It is interesting to note that although the theoretical approximation ratios of greedy Algorithm~\ref{algo:svd_greedy} and local search Algorithm~\ref{algo:svd_localsearch}
seem not as good as selection Algorithm~\ref{algo:trun1} and Algorithm~\ref{algo:trun2}, our numerical study shows that the searching algorithms in general work better and can find an optimal solution for most of the testing cases, especially the local search Algorithm~\ref{algo:svd_localsearch}.

\begin{algorithm}[htb]
	\caption{Local Search Algorithm for SSVD \eqref{eq_ssvdcom}}
	\label{algo:svd_localsearch}
	\begin{algorithmic}[1]
		\State \textbf{Input:} A matrix $\bm A \in \Re^{m \times n}$,  integers   $s_1\in [m]$, $s_2\in [n]$, and  $1\le k \le \min\{s_1, s_2\}$
		%		\State Let $\bm{A}=\bm{C}^{\top}\bm{C}$ denote its Cholesky factorization where $\bm{C} \in \Re^{d\times n}$
		%		\State Let $\bm{c}_i \in \Re^d$ denote the $i$-th column vector of matrix $\bm{C}$ for each $i \in [n]$
		\State Initialize $(\hat{S}_1 ,\hat{S}_2)$ as the output of greedy Algorithm~\ref{algo:svd_greedy}
		\State \textbf{do}
		\For {each pair {$(i_1,j_1) \in \hat{S}_1 \times ([m]\setminus \hat{S}_1)$}}
		\If{$||\bm A_{\hat{S}_1\cup \{j_1\} \setminus \{i_1\}, \hat{S}_2} ||_{(k)} > || \bm A_{\hat{S}_1,\hat{S}_2}||_{(k)}$}
		\State Update $\hat{S}_1 := \hat{S}_1 \cup \{j_1\} \setminus \{i_1\}$
		\EndIf
		\EndFor
		
		\For{each pair {$(i_2,j_2) \in \hat{S}_2 \times ([n]\setminus\hat{S}_2)$}}
		\If{$||\bm A_{\hat{S}_1, \hat{S}_2 \cup \{j_2\} \setminus \{i_2\}}||_{(k)} > ||\bm A_{\hat{S}_1, \hat{S}_2}||_{(k)}$}
		\State Update  $\hat{S}_2 := \hat{S}_2 \cup \{j_2\} \setminus \{i_2\}$
		\EndIf
		\EndFor
		\State\textbf{while} {there is still an improvement}
		\State \textbf{Output:} $\hat{S}_1$, $\hat{S}_2$%$(\bar{x},\bar{w})$
	\end{algorithmic}
\end{algorithm}

\begin{restatable}{theorem}{thmls} \label{thmls} 
	The local search Algorithm~\ref{algo:svd_localsearch} yields a $1/\sqrt{ks_1s_2}$-approximation ratio for SSVD \eqref{eq_ssvdcom}, i.e., suppose that the output of Algorithm~\ref{algo:svd_localsearch} is $(\hat{S}_1, \hat{S}_2)$, then
	\begin{align*}
	||\bm A_{\hat{S}_1, \hat{S}_2}||_{(k)} \ge \frac{1}{\sqrt{ks_1s_2}}  w^*,
	\end{align*}
	and the ratio is tight {when $s_1\le m/2$ and $s_2\le n/2$}.
\end{restatable}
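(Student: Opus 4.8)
The plan is to reduce the approximation-ratio part to Theorem~\ref{thmg} and then recycle Example~\ref{eg4} for tightness. First I would record the elementary monotonicity of Algorithm~\ref{algo:svd_localsearch}: it is initialized at the greedy output of Algorithm~\ref{algo:svd_greedy}, and each accepted move (the updates inside the two for-loops) is executed only when it strictly increases $||\bm A_{\hat S_1,\hat S_2}||_{(k)}$; hence the objective value is non-decreasing throughout the run, and the algorithm terminates after finitely many iterations since the objective strictly increases at every update and there are only finitely many feasible pairs $(S_1,S_2)$. Consequently the output value is at least the Ky Fan $k$-norm of the initial greedy solution, which by Theorem~\ref{thmg} is at least $w^*/\sqrt{ks_1s_2}$. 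This gives the claimed ratio with essentially no new computation.

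For tightness I would revisit Example~\ref{eg4}, where $m=2s_1$ and $n=2s_2$ (so $s_1\le m/2$ and $s_2\le n/2$), the greedy algorithm returns $(\hat S_1,\hat S_2)=([s_1],[s_2])$ with $||\bm A_{[s_1],[s_2]}||_{(k)}=1$, and the block $\bm A_3$ supported on rows $[s_1+1,2s_1]$ and columns $[s_2+1,2s_2]$ is feasible with $||\bm A_3||_{(k)}=\sqrt{ks_1s_2}$, so that $w^*\ge\sqrt{ks_1s_2}$. The crux is to verify that $([s_1],[s_2])$ is a local optimum, i.e., neither for-loop of Algorithm~\ref{algo:svd_localsearch} admits a strictly improving single-element swap. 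The rows available for insertion are exactly those in $[s_1+1,2s_1]$, each of which is identically zero on columns $[s_2]$ because the corresponding off-diagonal blocks of $\bm A$ vanish; thus swapping out some $i_1\in[s_1]\setminus\{1\}$ leaves a submatrix whose only nonzero entry is $A_{11}=1$ and whose Ky Fan $k$-norm is exactly $1$, while swapping out $i_1=1$ leaves the all-zero submatrix with norm $0$. Symmetrically, the columns available for insertion, namely those in $[s_2+1,2s_2]$, are identically zero on rows $[s_1]$, so no column swap strictly improves either. Therefore Algorithm~\ref{algo:svd_localsearch} performs no update and outputs $([s_1],[s_2])$ with value $1$; chaining this with the proven bound gives $1=\text{output}\ge w^*/\sqrt{ks_1s_2}\ge 1$, forcing $w^*=\sqrt{ks_1s_2}$ and $\text{output}/w^*=1/\sqrt{ks_1s_2}$, so the ratio is attained exactly.

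The main obstacle is entirely bookkeeping in the tightness step: one has to confirm that the ``trap'' submatrix $([s_1],[s_2])$ carrying the lone entry $A_{11}=1$ is isolated from the high-value block $\bm A_3$ under the single-swap neighborhood, i.e., that no single row or column exchange can inject nonzero mass from $\bm A_3$ while retaining the pivot entry $A_{11}$, and that discarding the pivot collapses the objective to $0$. A clean case split on whether the removed index equals $1$ settles both loops. I would also double-check, as already done in the proof of Theorem~\ref{thmg}, that under an adversarial resolution of the ties arising at Step~2 and in the for-loop of the greedy algorithm the greedy initialization actually lands at $([s_1],[s_2])$ rather than inside the $\bm A_3$ block, so that the local search genuinely starts in the trap; the rest of the argument is routine.
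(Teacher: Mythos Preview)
Your proposal is correct and follows essentially the same approach as the paper: inherit the $1/\sqrt{ks_1s_2}$ bound from Theorem~\ref{thmg} via the monotonicity of local search over its greedy initialization, and reuse Example~\ref{eg4} for tightness by checking that $([s_1],[s_2])$ is a single-swap local optimum. Your write-up is in fact more careful than the paper's on the tightness side---you explicitly split on whether the removed index equals $1$ and verify that the inserted row/column contributes nothing on the current columns/rows, and you flag the tie-breaking issue at the greedy initialization---but the underlying argument is identical.
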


\begin{proof}
	Since local search Algorithm~\ref{algo:svd_localsearch} uses the output solution of greedy Algorithm~\ref{algo:svd_greedy} as the initial solution and local search Algorithm~\ref{algo:svd_localsearch} is an improved heuristic method, the approximation ratio of Algorithm~\ref{algo:svd_localsearch} must be no worse than that of greedy Algorithm~\ref{algo:svd_greedy}, i.e., $1/\sqrt{ks_1s_2}$.
	
	Moreover, we show that the $1/\sqrt{ks_1s_2}$-approximation ratio is also tight for local search Algorithm~\ref{algo:svd_localsearch} and the ratio can be achieved by Example~\ref{eg4}. Indeed, in Example~\ref{eg4}, swapping one element from the chosen set and one element from the unchosen set, the resulting objective value is still equal to 1 and does not improve the current solution. It follows that the output of local search Algorithm~\ref{algo:svd_localsearch} is the same as that of Algorithm~\ref{algo:svd_greedy} with the objective value  1, which yields $1/\sqrt{ks_1s_2}$-approximation ratio for SSVD \eqref{eq_ssvdcom}. \qed
\end{proof}

%\yongchun{Note that local search algorithm Algorithm~\ref{algo:svd_localsearch} can return a very bad solution if the initial solution is arbitrary rather than the greedy output. Do we need to mention this?}

For the Steps 5 and 10 of the local search Algorithm~\ref{algo:svd_localsearch}, we multiply the right-hand side of the strict inequalities by a factor $(1+\delta)$, where $\delta>0$ denotes a strict improvement factor. Then the local search Algorithm~\ref{algo:svd_localsearch} possesses polynomial time complexity. Note that since  the local search Algorithm~\ref{algo:svd_localsearch} admits the same approximation ratio as the greedy Algorithm~\ref{algo:svd_greedy}, the extra factor $(1+\delta)$ does not affect the approximation ratio of the local search Algorithm~\ref{algo:svd_localsearch}.
\begin{proposition}\label{ls_time}
	Time complexity of the revised local search Algorithm~\ref{algo:svd_localsearch} is %\par\noindent
	$O((nks_1^2s_2+mks_1s_2^2)L/\delta)$ where $\delta >0$ denotes the strict improvement factor and $L:=\lceil\log_2   \sqrt{ks_1s_2}\rceil + \ell_1+ \ell_2$ with $\ell_1$ denoting the maximum binary encoding length of the absolute value of each entry in matrix $\bm A$ and $\ell_2$ denoting the encoding length of the smallest fraction.
\end{proposition}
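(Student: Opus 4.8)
\emph{Proof proposal.} I would run the classical ``geometric-progress'' argument for a scaled local search: count the accepted swaps, bound the work done per sweep, and multiply. Let $v_0:=\|\bm A_{\hat S_1,\hat S_2}\|_{(k)}$ be the value of the solution with which the greedy Algorithm~\ref{algo:svd_greedy} initializes Algorithm~\ref{algo:svd_localsearch}, and let $v_1,v_2,\dots,v_T$ be the values after the successive accepted swaps. Because in the revised algorithm the acceptance tests of Steps~5 and~10 are multiplied by $(1+\delta)$, every accepted swap satisfies $v_t>(1+\delta)v_{t-1}$, hence $v_T>(1+\delta)^Tv_0$; since $v_T\le w^*$ at all times, the number of accepted swaps obeys $T\le\log_{1+\delta}(w^*/v_0)$.

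Next I would replace $w^*/v_0$ by explicit quantities. For the numerator, the inequalities already used in the proof of Theorem~\ref{thmg} give $w^*\le\sqrt{k}\,\|\bm A_{S_1^*,S_2^*}\|_F\le\sqrt{ks_1s_2}\max_{i\in[m],j\in[n]}|A_{ij}|\le\sqrt{ks_1s_2}\,2^{\ell_1}$, where $(S_1^*,S_2^*)$ is an optimal solution to SSVD~\eqref{eq_ssvdcom} and $2^{\ell_1}$ bounds the largest entry in magnitude. For the denominator, the greedy solution always retains the row $i^*$ and the column $j^*$ of a maximum-magnitude entry, so $v_0\ge\sigma_1(\bm A_{\hat S_1,\hat S_2})\ge|A_{i^*j^*}|=\max_{i,j}|A_{ij}|\ge 2^{-\ell_2}$ by the encoding assumption on $\bm A$. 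Hence $w^*/v_0\le\sqrt{ks_1s_2}\,2^{\ell_1+\ell_2}$, and since $\log_2(1+\delta)\ge\delta$ for $\delta\in(0,1]$,
\begin{align*}
T\le\log_{1+\delta}\!\big(\sqrt{ks_1s_2}\,2^{\ell_1+\ell_2}\big)=\frac{\log_2\!\big(\sqrt{ks_1s_2}\,2^{\ell_1+\ell_2}\big)}{\log_2(1+\delta)}\le\frac{\lceil\log_2\sqrt{ks_1s_2}\rceil+\ell_1+\ell_2}{\delta}=\frac{L}{\delta}.
\end{align*}

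It remains to cost one sweep of the \textbf{do}--\textbf{while} loop and multiply. The loop runs at most $T+1=O(L/\delta)$ sweeps, since the sweep following the last improvement accepts nothing and terminates. In one sweep, the first \textbf{for} loop ranges over the $s_1(m-s_1)=O(ms_1)$ pairs of $\hat S_1\times([m]\setminus\hat S_1)$ and the second over the $s_2(n-s_2)=O(ns_2)$ pairs of $\hat S_2\times([n]\setminus\hat S_2)$, and each test evaluates the Ky Fan $k$-norm of an $s_1\times s_2$ matrix, costing $O(ks_1s_2)$ by \cite{shishkin2019fast}; so a sweep costs $O(mks_1^2s_2+nks_1s_2^2)$. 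Multiplying by the $O(L/\delta)$ sweep bound—and noting that the $O(\max\{s_1,s_2\}(m+n)ks_1s_2)$ cost of the initial greedy call (Theorem~\ref{thmg}) is of lower order—yields the stated running time.

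The only step beyond bookkeeping is the lower bound $v_0\ge 2^{-\ell_2}$: one has to pin down what ``encoding length of the smallest fraction'' means for a matrix whose Ky Fan $k$-norms are in general irrational, and argue that the value actually compared by the algorithm in Steps~5 and~10 is either $0$ or at least $2^{-\ell_2}$. With the greedy initialization this collapses to the elementary bound $v_0\ge\max_{i,j}|A_{ij}|$, so no root-separation estimate is needed; were an arbitrary feasible starting solution allowed instead, one would invoke a standard lower bound on the smallest nonzero singular value of a rational matrix in terms of its entries' encoding length.
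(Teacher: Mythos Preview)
Your proof is correct and follows essentially the same approach as the paper's: bound the number of accepted swaps geometrically via the ratio of the largest attainable objective value to the greedy initial value, then multiply by the cost of one full sweep (pair count times the $O(ks_1s_2)$ cost of each Ky Fan $k$-norm evaluation). One harmless discrepancy worth flagging: your per-sweep cost $O(mks_1^2s_2+nks_1s_2^2)$ has $m$ and $n$ interchanged relative to the bound stated in the proposition; your count $s_1(m-s_1)+s_2(n-s_2)$ is the one that matches Algorithm~\ref{algo:svd_localsearch} as written, so this appears to be a typo in the statement rather than an error on your part.
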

\begin{proof}
	First, according to the proof of Theorem~\ref{thmg}, we can derive that for any submatrix $\bm A_{\hat{S}_1, \hat{S}_2}$, the following always holds
	\begin{align*}
	||\bm A_{\hat{S}_1, \hat{S}_2}||_{(k)} \le \sqrt{ks_1s_2} \max_{i\in [m], j\in [n]} |\bm A_{ij}|. 
	\end{align*}
	%As a singular value can be irrational, we will analyze the numerator and denominator of $||\bm A_{\hat{S}_1, \hat{S}_2}||_{(k)} $, respectively.
	As $\ell_1$ is the maximum binary encoding length of $|\bm A_{ij}|$ for each $i\in [m], j\in [n]$, $||\bm A_{\hat{S}_1, \hat{S}_2}||_{(k)}$ can be further upper bounded by
	\begin{align*}
	||\bm A_{\hat{S}_1, \hat{S}_2}||_{(k)} \le 2^{\ell_1}\sqrt{ks_1s_2}.
	\end{align*}
	%the numerator of each singular value in $||\bm A_{\hat{S}_1, \hat{S}_2}||_{(k)}$ is at most equal to $\sqrt{ks_1s_2} \max_{i\in [m], j\in [n]} |\bm A_{ij}|$ and thus has the maximum binary encoding length $\lceil\log_2   \sqrt{ks_1s_2}\rceil + \ell_1$.
	According to the definition of $\ell_2$, %Let $\ell_2$ denote the maximum binary encoding length of the denominator of  each singular value in $||\bm A_{\hat{S}_1, \hat{S}_2}||_{(k)}$,  then then the maximum length of $||\bm A_{\hat{S}_1, \hat{S}_2}||_{(k)} $ is $L:=k(\lceil\log_2   \sqrt{ks_1s_2}\rceil + \ell_1+\ell_2)$.
	the largest and smallest value of $||\bm A_{\hat{S}_1, \hat{S}_2}||_{(k)} $ can be $2^{\ell_1}\sqrt{ks_1s_2}$ and $2^{-\ell_2}$. Suppose that we the accept the swapping if the improvement ratio is at least $1+\delta$, the number of swapping steps of local search Algorithm~\ref{algo:svd_localsearch} is at most
	\[\log_{1+\delta} (2^{\ell_1+\ell_2}\sqrt{ks_1s_2} ) \le \frac{2}{\delta}L,\]
	where $L:=\lceil\log_2   \sqrt{ks_1s_2}\rceil + \ell_1+ \ell_2$.		
	
	Finally, for each swapping at Steps 4-13, there are at most $(ns_1+ms_2)$ distinct evaluations of the $k$-truncated SVD of a size $s_1\times s_2$ submatrix, which takes at most $O(nks_1^2s_2+mks_1s_2^2)$. Therefore, local search Algorithm~\ref{algo:svd_localsearch} has an overall time complexity $O((nks_1^2s_2+mks_1s_2^2)L/\delta)$. \qed
\end{proof}

	\section{Extensions to Row-Sparse PCA} \label{sec:spca}
The general rank-$k$ SPCA \eqref{eq_spca} is closely related to SSVD \eqref{eq_ssvdcom}, which has been recently studied in \cite{dey2020solving, vu2012minimax}. The key ingredient of SPCA \eqref{eq_spca} is that its data matrix $\bm{A}\in \S_+^n$ is symmetric and positive semidefinite. 
We show that SPCA \eqref{eq_spca} can be viewed as a special case of SSVD \eqref{ssvd}. It should be noted that matrix $\bm A$ is assumed to be positive semidefinite for SPCA but can be non-symmetric for SSVD \eqref{ssvd}. %Although using the same notation $\bm A$ for the two problems, there are different underlying requirements on the data matrix.

\begin{proposition}\label{prop2}
	When matrix $\bm A $ is positive semidefinite and $s_1 = s_2=s$, SSVD \eqref{ssvd} reduces to SPCA \eqref{eq_spca}, i.e., there exists an optimal solution $(\bm U^*, \bm V^*)$ to SSVD \eqref{ssvd} satisfying $\bm U^* = \bm V^*$.
	%		\item The SSVD \eqref{eq_ssvdcom} can be formulated as the general sparse PCA problem with two additional constraints.
\end{proposition}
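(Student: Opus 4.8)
The plan is to exploit that $\bm A$ is positive semidefinite via a Cauchy--Schwarz argument in the Frobenius inner product. First I would factor $\bm A = \bm B^{\top}\bm B$ (for instance $\bm B = \bm A^{1/2}$, which exists because $\bm A\in\S_+^n$). Then, for any feasible pair $(\bm U,\bm V)$ of SSVD \eqref{ssvd} under $s_1=s_2=s$ --- i.e.\ $\bm U^{\top}\bm U=\bm V^{\top}\bm V=\bm I_k$ and $||\bm U||_0,||\bm V||_0\le s$ --- I would rewrite the objective as $\tr(\bm U^{\top}\bm A\bm V)=\tr\big((\bm B\bm U)^{\top}(\bm B\bm V)\big)$ and bound it by
\[
\tr(\bm U^{\top}\bm A\bm V)\ \le\ ||\bm B\bm U||_F\,||\bm B\bm V||_F\ =\ \sqrt{\tr(\bm U^{\top}\bm A\bm U)}\,\sqrt{\tr(\bm V^{\top}\bm A\bm V)}\ \le\ \max\big\{\tr(\bm U^{\top}\bm A\bm U),\ \tr(\bm V^{\top}\bm A\bm V)\big\},
\]
using Cauchy--Schwarz for the trace inner product, the identity $||\bm B\bm U||_F^2=\tr(\bm U^{\top}\bm A\bm U)$, and $\sqrt{ab}\le\max\{a,b\}$ for $a,b\ge 0$ (both quantities are nonnegative since $\bm A\succeq 0$).

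Next I would apply this to an optimal solution $(\bm U^*,\bm V^*)$ of SSVD \eqref{ssvd}. Since $\bm A$ is symmetric we have $\tr((\bm U^*)^{\top}\bm A\bm V^*)=\tr((\bm V^*)^{\top}\bm A\bm U^*)$, so by relabeling we may assume $\tr((\bm U^*)^{\top}\bm A\bm U^*)\ge \tr((\bm V^*)^{\top}\bm A\bm V^*)$. The pair $(\bm U^*,\bm U^*)$ is feasible for SSVD \eqref{ssvd} because $(\bm U^*)^{\top}\bm U^*=\bm I_k$ and $||\bm U^*||_0\le s=s_1=s_2$, and by the displayed inequality its objective value $\tr((\bm U^*)^{\top}\bm A\bm U^*)$ is at least $w^*=\tr((\bm U^*)^{\top}\bm A\bm V^*)$; by optimality it therefore equals $w^*$. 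Hence $(\bm U^*,\bm U^*)$ is an optimal solution of SSVD \eqref{ssvd} with identical left and right factors, which is exactly the claim. Imposing $\bm V=\bm U$ in SSVD \eqref{ssvd} with $s_1=s_2=s$ then reduces it literally to SPCA \eqref{eq_spca}, so the two problems share an optimal solution and optimal value.

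I do not anticipate a genuine obstacle; the only points requiring care are that the Cauchy--Schwarz step truly needs the factorization $\bm A=\bm B^{\top}\bm B$ --- which is precisely why this reduction breaks down for a nonsymmetric or indefinite $\bm A$ in general SSVD --- and that replacing $\bm V^*$ by $\bm U^*$ respects the sparsity budget, which holds exactly because $s_1=s_2$. An alternative but essentially equivalent route is to run the same estimate on the combinatorial form \eqref{eq_ssvdcom}, showing $||\bm A_{S_1,S_2}||_{(k)}\le\max\{||\bm A_{S_1,S_1}||_{(k)},||\bm A_{S_2,S_2}||_{(k)}\}$ for $\bm A\succeq 0$, and then picking whichever of $(S_1^*,S_1^*),(S_2^*,S_2^*)$ attains the optimum.
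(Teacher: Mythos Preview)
Your proposal is correct and follows essentially the same route as the paper: factor $\bm A=\bm A^{1/2}\bm A^{1/2}$, apply Cauchy--Schwarz, and conclude that one of $(\bm U,\bm U)$ or $(\bm V,\bm V)$ is at least as good as $(\bm U,\bm V)$. The only cosmetic difference is that the paper applies Cauchy--Schwarz column-by-column and then uses AM--GM to reach $\tfrac{1}{2}(\tr(\bm U^{\top}\bm A\bm U)+\tr(\bm V^{\top}\bm A\bm V))$ before bounding by the max, whereas you apply the Frobenius-inner-product Cauchy--Schwarz in one shot and go directly from $\sqrt{ab}$ to $\max\{a,b\}$; the combinatorial alternative you mention at the end is exactly the paper's Lemma~\ref{lem:kyfan}.
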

\begin{proof}
	When $\bm A \in  \S_+^n $  and $s_1 = s_2=s$,  SSVD \eqref{ssvd} becomes 
	\begin{align}\label{eq_ssvd2}
	\max_{\bm U \in \Re^{n \times k}, \bm V \in \Re^{n \times k}}  \left\{\tr(\bm U^{\top}\bm A \bm V) :   \bm U^{\top} \bm U =\bm V^{\top} \bm V= \bm I_k, ||\bm U||_0 \le s,   ||\bm V||_0 \le s \right\}.
	\end{align}
	Then for any feasible solution $(\bm U,\bm V)$ to problem \eqref{eq_ssvd2}, suppose that for each $i\in [k]$, $\bm u_i$ and $\bm v_i$ are the $i$th columns of $\bm U$ and $\bm V$, respectively,  then the objective value satisfies
	\begin{align*}
	&\tr(\bm U^{\top}\bm A \bm V) = \sum_{i\in [k]} \bm u_i^{\top}\bm A \bm v_i = \sum_{i\in [k]} \bm u_i^{\top}\bm A^{1/2} \bm A^{1/2}  \bm v_i  \le \sum_{i\in [k]} \sqrt{\bm u_i^{\top}\bm A \bm u_i \bm v_i^{\top} \bm A \bm v_i} \\
	&\le \sum_{i\in [k]} \frac{1}{2} \left({\bm u_i^{\top}\bm A \bm u_i + \bm v_i^{\top} \bm A \bm v_i}\right) = \frac{1}{2}\left(\tr(\bm U^{\top} \bm A \bm U) + \tr(\bm V^{\top} \bm A \bm V)  \right) \le \max \left\{ \tr(\bm U^{\top} \bm A \bm U), \tr(\bm V^{\top} \bm A \bm V)  \right\}.
	\end{align*}
	The above analysis states that for any feasible solution $(\bm U,\bm V)$ to problem \eqref{eq_ssvd2}, there exists a better solution between $(\bm U,\bm U)$ and $(\bm V,\bm V)$. This implies that there must  exist $\bm U^* = \bm V^*$ at optimality. \qed  %Therefore, we can drop one variable in SSVD \eqref{eq_ssvd1}, then SSVD \eqref{eq_ssvd1} now reduces to SPCA \eqref{eq_spca}. 
\end{proof}

Using Proposition~\ref{prop2} and the fact that  the rank-one SPCA has been proven to be NP-hard and  inapproximable  in \cite{magdon2017np}, we prove the following complexity result for SSVD \eqref{ssvd}.
\begin{corollary} \label{cor1_com}
	The	SSVD \eqref{ssvd} is NP-hard to solve and is inapproximable by any polynomial-time algorithm with a constant-factor approximation ratio.
	%\max_{
	%\bm z \in \{0,1\}^n, \bm X \in \S_+^n} \bigg\{\tr(\bm A \bm X):  \bm X \preceq \bm I_n, \tr(\bm X) \le k, \\ X_{ii} \le z_i, \forall i\in [n], \sum_{i\in [n]} z_i \le s   \bigg\}.
	%\end{align*}
\end{corollary}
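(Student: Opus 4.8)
The plan is to realize rank-one SPCA as a special case of SSVD and then invoke its known intractability from \cite{magdon2017np}. First I would take an arbitrary rank-one SPCA instance, i.e., a matrix $\bm A \in \S_+^n$ with sparsity parameter $s$, and send it to the SSVD instance consisting of the same $\bm A$ together with $s_1 = s_2 = s$ and $k = 1$; this map is trivially polynomial. By Proposition~\ref{prop2} the resulting SSVD instance has an optimal solution with $\bm U^* = \bm V^*$, hence its optimal value $w^*$ equals the optimal value $w^{spca}$ of the rank-one SPCA instance we started from. Therefore an exact polynomial-time algorithm for SSVD~\eqref{ssvd} would solve rank-one SPCA exactly, and since the latter is NP-hard this establishes the NP-hardness of SSVD~\eqref{ssvd}; note that the instances used have $k=1$, so the conclusion holds a fortiori for the general rank-$k$ problem.

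For the inapproximability part I would show that every $\alpha$-approximation algorithm for SSVD~\eqref{eq_ssvdcom} induces an $\alpha$-approximation algorithm for rank-one SPCA. Run the SSVD algorithm on the instance above and let $(\hat{S}_1,\hat{S}_2)$ be its output, so $|\hat{S}_1|\le s$, $|\hat{S}_2|\le s$ and $\|\bm A_{\hat{S}_1,\hat{S}_2}\|_{(1)}\ge \alpha w^* = \alpha w^{spca}$. Return whichever of the principal submatrices $\bm A_{\hat{S}_1,\hat{S}_1}$, $\bm A_{\hat{S}_2,\hat{S}_2}$ has the larger top eigenvalue; both index sets are feasible for SPCA with parameter $s$. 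Reusing the Cauchy--Schwarz step from the proof of Proposition~\ref{prop2}, applied to zero-padded top singular vectors of $\bm A_{\hat{S}_1,\hat{S}_2}$, together with $\sqrt{ab}\le\max\{a,b\}$, one gets $\sigma_1(\bm A_{\hat{S}_1,\hat{S}_2}) \le \sqrt{\lambda_1(\bm A_{\hat{S}_1,\hat{S}_1})\,\lambda_1(\bm A_{\hat{S}_2,\hat{S}_2})} \le \max\{\lambda_1(\bm A_{\hat{S}_1,\hat{S}_1}),\,\lambda_1(\bm A_{\hat{S}_2,\hat{S}_2})\}$, so the returned subset has SPCA objective at least $\alpha w^{spca}$. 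Since rank-one SPCA admits no constant-factor polynomial-time approximation \cite{magdon2017np}, the same holds for SSVD~\eqref{ssvd}.

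The only genuinely non-formal step is this last conversion of a rectangular selection $(\hat{S}_1,\hat{S}_2)$ into a single symmetric selection without degrading the ratio; I expect it to be the main obstacle, but it is already resolved by the displayed inequality, which shows that the better of $\hat{S}_1$ and $\hat{S}_2$ suffices. Everything else is a direct, ratio-preserving reduction, so no further technical machinery is needed.
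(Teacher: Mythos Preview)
Your proposal is correct and follows the same approach as the paper: reduce rank-one SPCA to SSVD via Proposition~\ref{prop2} and invoke the hardness/inapproximability of rank-one SPCA from \cite{magdon2017np}. Your explicit conversion of a rectangular output $(\hat{S}_1,\hat{S}_2)$ to a single symmetric selection without losing the approximation factor is exactly the $k=1$ case of Lemma~\ref{lem:kyfan}, which the paper states later; you have essentially re-derived it inline, making your argument for the inapproximability direction more self-contained than the paper's one-line justification.
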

%\begin{proof}
%	When matrix $\bm A $ is positive semidefinite, $s_1 = s_2=s$, and $k=1$, according to Proposition~\ref{prop2}, SSVD \eqref{ssvd} becomes rank-one SPCA that is proven to be NP-hard and  inapproximable  in \cite{magdon2017np}.
%\end{proof}

Following SSVD \eqref{eq_ssvdcom}, we also derive an equivalent combinatorial formulation for SPCA \eqref{eq_spca}, i.e.,
%\begin{corollary}\label{cor_spca}
\begin{align} \label{eq_spcacom}
\text{(SPCA)} \quad w^{spca}:=	\max_{\begin{subarray}{c}
	S\subseteq [n],  |S|\le s
	\end{subarray}
} ||\bm A_{S, S}||_{(k)}.
\end{align}
%\yongchun{I delete the proof. Even for SSVD, we didn't prove its combinatorial formulation.}
%\end{corollary}
%\begin{proof}
%	For any solution $\bm U$ to SPCA \eqref{eq_spca}, let subset $ S \subseteq [n]$ denote the index set of  non-zero rows in matrix $\bm U$ and clearly, $|S|\le s$. Thus, SPCA \eqref{eq_spca} is equivalent to
%	\begin{align*}
%	\max_{S \subseteq [n], |S|\le s} \left\{ 
%	\max_{\begin{subarray}{c}
%		\bm U\in \Re^{n\times k} 
%		\end{subarray}
%	}  \left\{\tr(\bm U^{\top} \bm A \bm U): \bm U^{\top} \bm U = \bm I_{k},  \bm U_{[n]\setminus S, [k]} = \bm 0 \right\} \right\}.
%	\end{align*}
%	Dropping the zero entries in matrices $\bm{U}$ and $\bm{A}$, the inner maximization problem reduces to
%	\begin{align*}
%	\max_{\begin{subarray}{c}
%		\bm U\in \Re^{|S| \times k} 
%		\end{subarray}
%	}  \left\{\tr(\bm U^{\top} \bm A_{S,S} \bm U): \bm U^{\top} \bm U = \bm I_{k} \right\} = \sum_{ i \in [k]} \lambda_i(\bm A_{S, S}) = ||\bm A_{S, S}||_{(k)},
%	\end{align*}
%	where the first equality follows from theorem 23.2 in \cite{shalev2014understanding} and the second one is due to the fact that for the positive semidefinite matrix $\bm A_{S, S}$,  one must have $\lambda_i(\bm A_{S, S}) = \sigma_i(\bm A_{S,S})$ for each $i\in [k]$. 
%	
%	Combining the results, we arrive at the combinatorial formulation \eqref{eq_spcacom}.
%\end{proof}

\subsection{Exact Formulations of SPCA}
Following the spirit of Theorem~\ref{thm_sdp}, we can also derive two exact MISDP formulations for SPCA \eqref{eq_spca}. Since Theorem~\ref{thm_sdp} replies on Lemma~\ref{lem:svd} that reformulates the Ky Fan $k$-norm of a matrix as an SDP, we next show the similar result in Lemma~\ref{lem:eigen} for the positive semidefinite matrix.

\begin{lemma} \label{lem:eigen}
	For any matrix $\bm B \in \S_+^n$ with rank $r$ and an integer $k \in [r]$,  let $\bm B = \sum_{i\in [r]} \lambda_i(\bm B)\bm q_i \bm q_i^{\top}$ denote the eigen-decomposition of matrix $\bm B$. Then 
	we have
	\begin{align*}
	\max_{\bm X \in \S_+^n}  \left\{\tr(\bm B \bm X): \bm X \preceq \bm I_n, \tr(\bm X) \le k  \right\} = ||\bm B||_{(k)},
	\end{align*}
	where the equality can be achieved by $\bm X^* = \sum_{ i \in [k]}\bm q_i \bm q_i^{\top}$.
\end{lemma}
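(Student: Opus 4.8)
The plan is to obtain this as an almost immediate consequence of Lemma~\ref{lem:svd} specialized to a symmetric positive semidefinite matrix. The key observation is that when $\bm B\in\S_+^n$, its singular value decomposition coincides with its eigen-decomposition, so $\sigma_i(\bm B)=\lambda_i(\bm B)$ for each $i$ and $\|\bm B\|_{(k)}=\sum_{i\in[k]}\lambda_i(\bm B)$.

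For the inequality ``$\le$'', I would check that the feasible region in Lemma~\ref{lem:eigen} is contained in the feasible region of the maximization problem in Lemma~\ref{lem:svd} applied with $m=n$: if $\bm X\in\S_+^n$ satisfies $\bm X\preceq\bm I_n$, then $\|\bm X\|_2=\lambda_1(\bm X)\le 1$; and since $\bm X\succeq 0$, its nuclear norm equals its trace, so $\tr(\bm X)\le k$ gives $\|\bm X\|_*\le k$. Hence every $\bm X$ feasible here is feasible for Lemma~\ref{lem:svd} with the same $\bm B$, and that lemma yields $\tr(\bm B\bm X)\le\|\bm B\|_{(k)}$; taking the maximum over all such $\bm X$ gives the ``$\le$'' direction. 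For the reverse direction, I would plug in the candidate $\bm X^* =\sum_{i\in[k]}\bm q_i\bm q_i^\top$: it is symmetric PSD, and since $\{\bm q_i\}_{i\in[r]}$ are orthonormal, $\bm X^*$ has eigenvalue $1$ with multiplicity $k$ and eigenvalue $0$ otherwise, so $\bm X^*\preceq\bm I_n$ and $\tr(\bm X^*)=k$, i.e., $\bm X^*$ is feasible. Using $\bm q_i^\top\bm q_j=0$ for $i\neq j$ and $\bm q_i^\top\bm q_i=1$ together with $\bm B=\sum_{i\in[r]}\lambda_i(\bm B)\bm q_i\bm q_i^\top$, one computes $\tr(\bm B\bm X^*)=\sum_{i\in[k]}\lambda_i(\bm B)=\|\bm B\|_{(k)}$, which matches the upper bound. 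This establishes the identity and shows the optimum is attained at $\bm X^*$.

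There is essentially no hard step here; the only points requiring care are (i) recording that for a PSD matrix the Ky Fan $k$-norm is the sum of the $k$ largest eigenvalues, so that both sides of the claimed identity refer to eigenvalues, and (ii) the feasible-region containment, which relies on $\|\bm X\|_*=\tr(\bm X)$ for $\bm X\succeq 0$. If one prefers a self-contained argument that does not invoke Lemma~\ref{lem:svd}, the same template works directly: write any feasible $\bm X$ in an eigenbasis as $\bm X=\sum_{j\in[n]}\mu_j\bm p_j\bm p_j^\top$ with $0\le\mu_j\le1$ and $\sum_{j\in[n]}\mu_j\le k$, note $\bm p_j^\top\bm B\bm p_j\ge 0$ since $\bm B\succeq 0$, order the numbers $c_j:=\bm p_j^\top\bm B\bm p_j$ in nonincreasing order, and combine the Schur--Horn majorization $\sum_{j\le\ell}c_{(j)}\le\sum_{i\le\ell}\lambda_i(\bm B)$ with the elementary bound $\sum_j\mu_j c_{(j)}\le\sum_{j\le k}c_{(j)}$ (valid for a nonincreasing nonnegative sequence and weights $\mu_j\in[0,1]$ summing to at most $k$) to conclude $\tr(\bm B\bm X)\le\sum_{i\in[k]}\lambda_i(\bm B)$.
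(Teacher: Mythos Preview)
Your proposal is correct and follows essentially the same argument as the paper: show the feasible set of Lemma~\ref{lem:eigen} is contained in that of Lemma~\ref{lem:svd} to obtain the upper bound, then verify $\bm X^*=\sum_{i\in[k]}\bm q_i\bm q_i^\top$ is feasible and attains $\|\bm B\|_{(k)}$. The additional Schur--Horn sketch you offer is a valid alternative but unnecessary here.
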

\begin{proof}
	According to Lemma~\ref{lem:svd}, $||\bm B||_{(k)}$ can be formulated by
	\begin{align}\label{eq_temp}
	\max_{\bm X \in \Re^{n \times n} }  \left\{\tr(\bm B \bm X): ||\bm X||_2 \le 1, ||\bm X||_{*}\le k  \right\},
	\end{align}
	which  relaxes the maximization problem in Lemma~\ref{lem:eigen}.
	
	Given that the optimal value of  problem \eqref{eq_temp} is equal to $||\bm B||_{(k)}$, the optimal value of the maximization problem in Lemma~\ref{lem:eigen} must be upper bounded by $||\bm B||_{(k)}$. On the other hand, the upper bound $||\bm B||_{(k)}$ is achievable by the feasible solution $\bm X^* = \sum_{ i \in [k]}\bm q_i \bm q_i^{\top}$ since
	\begin{align*}
	\tr(\bm B \bm X^*) = \sum_{ i \in [k]} \bm q_i^{\top} \bm B\bm q_i = \sum_{ i \in [k]}\lambda_i(\bm B) = ||\bm B||_{(k)},
	\end{align*}
	where the last equality is because for any positive semidefinite matrix, its singular values are equal to the eigenvalues. This completes the proof. \qed
	%Thus, we show that the optimal value of the maximization problem in \ref{lem:eigen} is equal to $||\bm B||_{(k)}$.
\end{proof}

We are ready to derive the first exact MISDP for SPCA \eqref{eq_spca}.
\begin{corollary} \label{cor1}
	The SPCA \eqref{eq_spca} is equivalent to
	\begin{align} \label{eq_spcasdp1}
	\text{\rm (SPCA)}  \ \ w^{spca} :=	\max_{\bm z \in \{0,1\}^n, \bm X \in \S_+^n} \bigg\{\tr(\bm A \bm X):  \bm X \preceq \bm I_n, \tr(\bm X) \le k,  X_{ii} \le z_i, \forall i\in [n], \sum_{i\in [n]} z_i \le s   \bigg\}.
	\end{align}
\end{corollary}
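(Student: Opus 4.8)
The plan is to mirror the argument of Theorem~\ref{thm_sdp}, but using Lemma~\ref{lem:eigen} in place of Lemma~\ref{lem:svd} so that the semidefinite block becomes a symmetric-PSD program rather than a rectangular one. First I would invoke the combinatorial reformulation \eqref{eq_spcacom} of SPCA, which says that $w^{spca} = \max\{\|\bm A_{S,S}\|_{(k)} : S\subseteq[n],\ |S|\le s\}$, and then establish a one-to-one correspondence between feasible solutions $(\bm z,\bm X)$ of \eqref{eq_spcasdp1} and subsets $S\subseteq[n]$ with $|S|\le s$, showing the objective values match.

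In the forward direction, take a feasible $(\bm z,\bm X)$ for \eqref{eq_spcasdp1} and let $S$ be the support of $\bm z$; the cardinality bound $\sum_i z_i\le s$ gives $|S|\le s$. The constraints $X_{ii}\le z_i$ together with $\bm X\succeq 0$ force $X_{ii}=0$ for $i\notin S$, and for a PSD matrix a zero diagonal entry forces the whole corresponding row and column to vanish; hence $\bm X$ is supported on $S\times S$. Restricting to that block, the problem collapses to $\max\{\tr(\bm A_{S,S}\bm X_{S,S}) : \bm X_{S,S}\succeq 0,\ \bm X_{S,S}\preceq \bm I_{|S|},\ \tr(\bm X_{S,S})\le k\}$, whose optimal value is $\|\bm A_{S,S}\|_{(k)}$ by Lemma~\ref{lem:eigen} applied to $\bm B=\bm A_{S,S}$ (which is itself PSD as a principal submatrix of $\bm A\in\S_+^n$). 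Conversely, given $S$ with $|S|\le s$, set $z_i=\mathbf 1[i\in S]$, take the optimizer $\bm X_{S,S}^\star=\sum_{i\in[k]}\bm q_i\bm q_i^\top$ from Lemma~\ref{lem:eigen} for $\bm A_{S,S}$, and pad it with zeros outside the $S\times S$ block; one checks $\bm X\succeq 0$, $\bm X\preceq\bm I_n$, $\tr(\bm X)\le k$, and $X_{ii}=0\le z_i$ for $i\notin S$ while $X_{ii}\le 1 = z_i$ for $i\in S$ (since $\bm X\preceq\bm I_n$), so $(\bm z,\bm X)$ is feasible with objective $\|\bm A_{S,S}\|_{(k)}$. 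Taking maxima over both sides yields the equivalence.

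The one genuinely non-routine point is the claim that $X_{ii}\le z_i$ with $\bm X\succeq 0$ kills the entire $i$th row and column when $z_i=0$; this is the standard fact that a PSD matrix with a zero diagonal entry has that row and column identically zero (apply positive semidefiniteness to $2\times 2$ principal minors involving index $i$). Everything else is bookkeeping: checking that the padded matrix inherits $\bm X\preceq\bm I_n$ and $\tr(\bm X)\le k$, and that the diagonal-domination constraint is satisfied. I would also remark, as the text already signals with ``two exact MISDP formulations,'' that an alternative uses Lemma~\ref{lem:svd} directly on $\bm A$ with a rectangular $\bm X$, but the PSD version above is the cleaner one and is the natural analogue of Theorem~\ref{thm_sdp}.
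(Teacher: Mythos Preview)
Your proposal is correct and follows exactly the approach the paper indicates: mirror the proof of Theorem~\ref{thm_sdp} with Lemma~\ref{lem:eigen} replacing Lemma~\ref{lem:svd}, which is precisely what the paper states before omitting the details. The one point you add beyond the rectangular case, namely that $X_{ii}=0$ together with $\bm X\succeq 0$ annihilates the $i$th row and column, is the right observation needed to make the diagonal constraint $X_{ii}\le z_i$ do the work that $\|\bm X_{:,i}\|_\infty\le z_i$ did in Theorem~\ref{thm_sdp}; your closing remark about the ``two exact MISDP formulations'' is slightly off, since the second formulation the paper has in mind is the Cholesky-based one in Corollary~\ref{cor2}, not a rectangular variant.
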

\begin{proof}
	The result directly follows from the proof of Theorem~\ref{thm_sdp} and the identity in Lemma~\ref{lem:eigen}. Thus, the proof is omitted for brevity. \qed
\end{proof}

For matrix $\bm A \in \S_+^n$ with rank $r$, let $\bm A =\bm C^{\top} \bm C$ denote the Cholesky decomposition, where $\bm C \in \Re^{r \times n}$ is the corresponding Cholesky matrix and for each $i\in [n]$ with $\bm c_i\in \Re^r$ denoting its $i$th column.  Then, we obtain another exact MISDP for SPCA \eqref{eq_spca}. 
\begin{corollary} \label{cor2}
	The SPCA \eqref{eq_spca} is equivalent to
	\begin{align}\label{eq_spcasdp2}
	\text{\rm (SPCA)}  \ \ w^{spca} :=	\max_{ \begin{subarray}{c} \bm z \in \{0,1\}^n, \bm X \in \S_+^r, \\
		\bm W_1, \cdots, \bm W_n \in \S_+^r
		\end{subarray}
	} \bigg\{\sum_{i\in [n]}\bm c_i^{\top} \bm W_i \bm c_i:  &\bm X \preceq \bm I_r, \tr(\bm X) \le k, \bm X \succeq \bm W_i,  \forall i\in [n],  \nonumber  \\
	&	\tr(\bm W_i) \le z_i, \forall i\in [n],  \sum_{i\in [n]} z_i \le s   \bigg\}.
	\end{align}
\end{corollary}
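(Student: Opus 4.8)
The plan is to establish \eqref{eq_spcasdp2} by showing that both sides equal the combinatorial value $\max_{|S|\le s}||\bm A_{S,S}||_{(k)}=w^{spca}$ from \eqref{eq_spcacom}, using the semidefinite identity in Lemma~\ref{lem:eigen} applied to a Gram matrix in $\S_+^r$ rather than in $\S_+^n$. The starting observation is that for $S\subseteq[n]$ the principal submatrix factors as $\bm A_{S,S}=\bm C_S^{\top}\bm C_S$, where $\bm C_S$ collects the columns $\{\bm c_i\}_{i\in S}$ of the Cholesky factor $\bm C$; since $\bm M^{\top}\bm M$ and $\bm M\bm M^{\top}$ have the same nonzero eigenvalues, $||\bm A_{S,S}||_{(k)}=||\bm C_S\bm C_S^{\top}||_{(k)}=\big\|\sum_{i\in S}\bm c_i\bm c_i^{\top}\big\|_{(k)}$. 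Applying Lemma~\ref{lem:eigen} to $\bm B:=\sum_{i\in S}\bm c_i\bm c_i^{\top}\in\S_+^r$ yields
\begin{align*}
||\bm A_{S,S}||_{(k)}=\max_{\bm X\in\S_+^r}\Big\{\textstyle\sum_{i\in S}\bm c_i^{\top}\bm X\bm c_i:\ \bm X\preceq\bm I_r,\ \tr(\bm X)\le k\Big\},
\end{align*}
attained by the top-$k$ eigen-projection $\bm X^{*}=\sum_{j\in[k]}\bm q_j\bm q_j^{\top}$ of $\bm B$ (read as the projection onto $\mathrm{range}(\bm B)$ when $\mathrm{rank}(\bm B)<k$). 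It remains to match the right-hand side of \eqref{eq_spcasdp2} with $\max_{|S|\le s}||\bm A_{S,S}||_{(k)}$ by two inequalities.

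First I would prove ``\eqref{eq_spcasdp2}$\,\le w^{spca}$''. Given any feasible $(\bm z,\bm X,\bm W_1,\dots,\bm W_n)$, set $S:=\{i\in[n]:z_i=1\}$, so $|S|=\sum_i z_i\le s$. For $i\notin S$ the constraints $\tr(\bm W_i)\le z_i=0$ and $\bm W_i\succeq\bm 0$ force $\bm W_i=\bm 0$; for $i\in S$, $\bm 0\preceq\bm W_i\preceq\bm X$ gives $\bm c_i^{\top}\bm W_i\bm c_i\le\bm c_i^{\top}\bm X\bm c_i$. Summing and applying Lemma~\ref{lem:eigen} to $\bm B=\sum_{i\in S}\bm c_i\bm c_i^{\top}$ together with this feasible $\bm X$,
\begin{align*}
\sum_{i\in[n]}\bm c_i^{\top}\bm W_i\bm c_i\ \le\ \sum_{i\in S}\bm c_i^{\top}\bm X\bm c_i
=\tr\Big(\big(\textstyle\sum_{i\in S}\bm c_i\bm c_i^{\top}\big)\bm X\Big)
\ \le\ \Big\|\textstyle\sum_{i\in S}\bm c_i\bm c_i^{\top}\Big\|_{(k)}
=||\bm A_{S,S}||_{(k)}\ \le\ w^{spca},
\end{align*}
where the last step uses $|S|\le s$ in \eqref{eq_spcacom}.

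For the reverse inequality I would take an optimal $S^{*}$ for \eqref{eq_spcacom}, put $\bm B:=\sum_{i\in S^{*}}\bm c_i\bm c_i^{\top}$, choose $\bm X:=\bm X^{*}$ its top-$k$ eigen-projection, set $z_i=1$ iff $i\in S^{*}$ (so $\sum_i z_i=|S^{*}|\le s$), and for $i\in S^{*}$ define $\bm W_i:=\bm v_i\bm v_i^{\top}$ with $\bm v_i:=\bm X^{*}\bm c_i/\|\bm X^{*}\bm c_i\|_2$ (and $\bm W_i:=\bm 0$ when $\bm X^{*}\bm c_i=\bm 0$ or $i\notin S^{*}$). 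Feasibility holds because $\bm v_i$ is a unit vector in $\mathrm{range}(\bm X^{*})$, so $\bm W_i=\bm v_i\bm v_i^{\top}\preceq\bm X^{*}=\bm X$ and $\tr(\bm W_i)\le 1=z_i$; and idempotency $(\bm X^{*})^2=\bm X^{*}$ gives $\bm c_i^{\top}\bm W_i\bm c_i=(\bm c_i^{\top}\bm v_i)^2=\|\bm X^{*}\bm c_i\|_2^{2}=\bm c_i^{\top}\bm X^{*}\bm c_i$ (trivially also when $\bm X^{*}\bm c_i=\bm 0$), so
\begin{align*}
\sum_{i\in[n]}\bm c_i^{\top}\bm W_i\bm c_i=\sum_{i\in S^{*}}\bm c_i^{\top}\bm X^{*}\bm c_i=\tr(\bm B\bm X^{*})=||\bm B||_{(k)}=||\bm A_{S^{*},S^{*}}||_{(k)}=w^{spca}.
\end{align*}
Combining the two inequalities yields \eqref{eq_spcasdp2}.

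The step I expect to be the main obstacle is precisely this construction: the naive choice $\bm W_i=\bm X$ violates $\tr(\bm W_i)\le z_i=1$ as soon as $k\ge 2$, so one must use the fact that the optimizer in Lemma~\ref{lem:eigen} can be taken to be an orthogonal projection — only idempotency makes the rank-one slice $\bm v_i\bm v_i^{\top}$ recover the full quadratic value $\bm c_i^{\top}\bm X^{*}\bm c_i$ while still fitting under the unit trace cap. A secondary technicality is the possible rank-deficiency of $\bm A_{S^{*},S^{*}}$: when $\mathrm{rank}(\bm B)<k$ one takes $\bm X^{*}$ to be the projection onto $\mathrm{range}(\bm B)$, which still satisfies $\bm X^{*}\preceq\bm I_r$, $\tr(\bm X^{*})\le k$, $(\bm X^{*})^2=\bm X^{*}$, and $\tr(\bm B\bm X^{*})=||\bm B||_{(k)}$, so the argument goes through unchanged. (Alternatively one could start from \eqref{eq_spcasdp1}, substitute $\bm A=\bm C^{\top}\bm C$, use $\tr(\bm A\bm X)=\tr(\bm C\bm X\bm C^{\top})$ and the vanishing of the rows and columns of $\bm X$ indexed outside the support of $\bm z$, but the combinatorial route above is cleaner.)
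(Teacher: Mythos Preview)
Your proof is correct and follows the same overall route as the paper: both start from the Gram identity $\|\bm A_{S,S}\|_{(k)}=\big\|\sum_{i\in S}\bm c_i\bm c_i^{\top}\big\|_{(k)}$ (via the shared nonzero spectrum of $\bm M^{\top}\bm M$ and $\bm M\bm M^{\top}$), then invoke Lemma~\ref{lem:eigen} in $\S_+^r$ to introduce $\bm X$. The one meaningful difference is in the last step. The paper writes the inner objective as $\tr\big(\sum_{i}z_i\bm X\bm c_i\bm c_i^{\top}\big)$ and then simply cites theorem~1 of \cite{li2020exact} to linearize the bilinear products $z_i\bm X$ via the auxiliary $\bm W_i$'s. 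You instead give a self-contained two-inequality argument, and in particular you supply an explicit feasible construction for the ``$\ge$'' direction: taking $\bm X^{*}$ to be the (idempotent) top-$k$ eigen-projection and setting $\bm W_i=\bm v_i\bm v_i^{\top}$ with $\bm v_i=\bm X^{*}\bm c_i/\|\bm X^{*}\bm c_i\|_2$, so that $\tr(\bm W_i)\le 1$ yet $\bm c_i^{\top}\bm W_i\bm c_i=\bm c_i^{\top}\bm X^{*}\bm c_i$ by $(\bm X^{*})^2=\bm X^{*}$. This neatly resolves the issue you correctly flagged (that $\bm W_i=\bm X$ is infeasible once $k\ge 2$), and your handling of the rank-deficient case is fine. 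In short: same argument as the paper, but with the linearization step made explicit rather than outsourced.
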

\begin{proof}
	Given $\bm A = \bm C^{\top}\bm C$, for any feasible subset $S$ to SPCA \eqref{eq_spcacom}, its objective value is equal to
	\begin{align*}
	||\bm A_{S, S}||_{(k)} &=\left\|\bm C_{[r], S}^{\top} \bm C_{[r], S}\right\|_{(k)}=\left\|\bm C_{[r], S}\bm C_{[r], S}^{\top} \right\|_{(k)}  
	= \bigg|\bigg|\sum_{ i \in S} \bm c_i \bm c_i^{\top}\bigg|\bigg|_{(k)},
	\end{align*}
	where the second equality is because for any matrix $\bm B$,  $\bm B^{\top}\bm B$ and $\bm B\bm B^{\top}$ are positive semidefinite and have the same non-zero eigenvalues.
	
	Therefore, SPCA \eqref{eq_spcacom} can be formulated as
	\begin{align*}
	\max_{S\subseteq [n], |S|\le s}  \bigg|\bigg|\sum_{ i \in S} \bm c_i \bm c_i^{\top}\bigg|\bigg|_{(k)}.
	\end{align*}
	Introducing binary variables $\bm z\in \Re^n$ such that for all $i\in[n]$, let $z_i=1$ if $i\in S$ and zero; otherwise, the above formulation can be written as
	\begin{align*}
	\max_{\bm z\in \Re^n} \bigg\{  \bigg|\bigg|\sum_{ i \in [n]} z_i \bm c_i \bm c_i^{\top} \bigg|\bigg|_{(k)}: \sum_{ i \in [n]} z_i \le s \bigg\}.
	\end{align*}
	Using the result in Lemma~\ref{lem:eigen} and plugging the SDP formulation of the objective function above, the SPCA now is equivalent to
	\begin{align*}
	\max_{\bm z\in \Re^n, \bm X \in \S_+^r} \bigg\{ \tr\bigg(\sum_{ i \in [n]}  z_i \bm X \bm c_i \bm c_i^{\top}\bigg): \bm X \preceq \bm I_r, \tr(\bm X) \le k,  \sum_{ i \in [n]} z_i \le s \bigg\}.
	\end{align*}
	Following the proof of theorem 1 in \cite{li2020exact}, the bilinear terms $\{ z_i \bm X\}_{i\in [n]}$ can be linearized and finally, the SPCA  \eqref{eq_spca}  can be formulated by MISDP \eqref{eq_spcasdp2}.
	\qed	
\end{proof}

\subsection{Selection Algorithms of SPCA}
As a special case of SSVD \eqref{eq_ssvdcom}, it is natural to extend the proposed approximation algorithms of SSVD \eqref{eq_ssvdcom} to solve SPCA \eqref{eq_spcacom}.
In this subsection, we tailor the selection algorithms to solve SPCA \eqref{eq_spcacom}. In particular, the extensions of selection algorithms are built on the following key lemma.
\begin{lemma} \label{lem:kyfan}
	For any matrix $\bm B \in \S_+^n$ and two equal-sized subsets $S_1, S_2\subseteq [n]$, we have
	\begin{align*}
	||\bm B_{S_1, S_2}||_{(k)} \le  \max \{||\bm B_{S_1, S_1}||_{(k)}, ||\bm B_{S_2, S_2}||_{(k)}  \},
	\end{align*}
	where the positive integer $k \le |S_1|$.
\end{lemma}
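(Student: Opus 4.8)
The plan is to reduce the inequality to a statement about an arbitrary positive semidefinite factorization of $\bm B$ and then combine the variational description of the Ky Fan $k$-norm with Cauchy--Schwarz and AM--GM; morally this is the matrix-block version of the classical fact that an off-diagonal block of a positive semidefinite matrix is dominated by its diagonal blocks. First I would write $\bm B=\bm C^\top\bm C$ for some $\bm C$ (for instance the Cholesky factor used in Corollary~\ref{cor2}, or $\bm C=\bm B^{1/2}$), and let $\bm P$ and $\bm Q$ be the submatrices of $\bm C$ formed by the columns indexed by $S_1$ and $S_2$, respectively. This gives the identities $\bm B_{S_1,S_2}=\bm P^\top\bm Q$, $\bm B_{S_1,S_1}=\bm P^\top\bm P$, and $\bm B_{S_2,S_2}=\bm Q^\top\bm Q$; these hold verbatim even when $S_1\cap S_2\neq\emptyset$, so no disjointness assumption is needed.

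Next, with $t:=|S_1|=|S_2|\ge k$, I would apply the variational formula~\eqref{tsvd} to the $t\times t$ matrix $\bm P^\top\bm Q$: there exist $\bm U^*,\bm V^*\in\Re^{t\times k}$ with $(\bm U^*)^\top\bm U^*=(\bm V^*)^\top\bm V^*=\bm I_k$ such that
\[
\|\bm B_{S_1,S_2}\|_{(k)}=\tr\!\big((\bm U^*)^\top\bm P^\top\bm Q\,\bm V^*\big)=\big\langle\bm P\bm U^*,\ \bm Q\bm V^*\big\rangle_F .
\]
Applying Cauchy--Schwarz for the Frobenius inner product and then AM--GM yields
\[
\|\bm B_{S_1,S_2}\|_{(k)}\le\|\bm P\bm U^*\|_F\,\|\bm Q\bm V^*\|_F\le\tfrac12\Big(\|\bm P\bm U^*\|_F^2+\|\bm Q\bm V^*\|_F^2\Big).
\]

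The final step rewrites each squared Frobenius norm as a trace controlled by a diagonal block: $\|\bm P\bm U^*\|_F^2=\tr\!\big((\bm U^*)^\top\bm B_{S_1,S_1}\bm U^*\big)\le\|\bm B_{S_1,S_1}\|_{(k)}$, where the inequality is Ky Fan's maximum principle (the variational characterization of the sum of the $k$ largest eigenvalues) applied to the symmetric positive semidefinite matrix $\bm B_{S_1,S_1}$ --- whose singular values coincide with its eigenvalues --- evaluated at the orthonormal $k$-frame $\bm U^*$; similarly $\|\bm Q\bm V^*\|_F^2\le\|\bm B_{S_2,S_2}\|_{(k)}$. Substituting gives
\[
\|\bm B_{S_1,S_2}\|_{(k)}\le\tfrac12\Big(\|\bm B_{S_1,S_1}\|_{(k)}+\|\bm B_{S_2,S_2}\|_{(k)}\Big)\le\max\big\{\|\bm B_{S_1,S_1}\|_{(k)},\ \|\bm B_{S_2,S_2}\|_{(k)}\big\},
\]
which is in fact slightly stronger than the claimed bound.

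I do not foresee a genuine obstacle. The two points needing care are: using the column-selection factorization $\bm B_{S_1,S_2}=\bm P^\top\bm Q$ rather than manipulating a principal submatrix of $\bm B$ directly (the latter would force $S_1,S_2$ to be disjoint), and invoking~\eqref{tsvd} for the \emph{nonsymmetric} matrix $\bm P^\top\bm Q$ while separately invoking the symmetric Ky Fan maximum principle for the positive semidefinite diagonal blocks $\bm B_{S_1,S_1}$ and $\bm B_{S_2,S_2}$. The remaining verifications are routine.
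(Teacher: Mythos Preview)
Your proof is correct and is essentially the same approach as the paper's: both factor $\bm B$ (you via $\bm C$, the paper via $\bm B^{1/2}$ in the proof of Proposition~\ref{prop2}), apply Cauchy--Schwarz followed by AM--GM to bound $\tr((\bm U^*)^\top\bm B\bm V^*)$ by the average of $\tr((\bm U^*)^\top\bm B_{S_1,S_1}\bm U^*)$ and $\tr((\bm V^*)^\top\bm B_{S_2,S_2}\bm V^*)$, and then invoke the variational characterization of the Ky Fan $k$-norm for the positive semidefinite diagonal blocks. The only cosmetic differences are that the paper works with $\bm U^*,\bm V^*\in\Re^{n\times k}$ carrying the support constraints and applies Cauchy--Schwarz column by column, whereas you work directly on the $t\times t$ submatrix and apply a single Frobenius Cauchy--Schwarz; neither changes the substance.
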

\begin{proof}
	When $\bm B \in \S_+^n$ and $|S_1|=|S_2|=s$,	for any optimal solution $(\bm U^*, \bm V^*)$ to  problem \eqref{eq_ssvd2} with $\bm A :=\bm B$, according to the inequality in the proof of Proposition~\ref{prop2}, we show
	\begin{align*}
	||\bm B_{S_1, S_2}||_{(k)} &= \tr((\bm U^*)^{\top} \bm B \bm V^*) \le \max \{\tr((\bm U^*)^{\top} \bm B \bm U^*), \tr((\bm V^*)^{\top} \bm B \bm V^*) \} \\
	&\le  \max \{||\bm B_{S_1, S_1}||_{(k)}, ||\bm B_{S_2, S_2}||_{(k)}  \},
	\end{align*}
	where the last inequality is because matrices $\bm U^*$  and $ \bm V^*$ are feasible to problems 
	\begin{align*}
	\max_{\begin{subarray}{c}
		\bm U\in \Re^{n \times k}
		\end{subarray}}  \left\{\tr(\bm U^{\top} \bm B \bm U): \bm U^{\top} \bm U =\bm I_{k}, \bm U_{[n]\setminus {S}_1, [k]}=\bm 0_{n-s, k} \right\} = ||\bm B_{S_1, S_1}||_{(k)},\\
	\max_{\begin{subarray}{c}
		\bm V\in \Re^{n \times k}
		\end{subarray}}  \left\{\tr(\bm V^{\top} \bm B \bm V): \bm V^{\top} \bm V =\bm I_{k}, \bm V_{[n]\setminus {S}_2, [k]}=\bm 0_{n-s, k} \right\} = ||\bm B_{S_2, S_2}||_{(k)},
	\end{align*} 
	where the equalities are due to theorem 23.2 in \cite{shalev2014understanding}. \qed
\end{proof}

In the selection Algorithm~\ref{algo:trun1}-Algorithm~\ref{algo:trun3} for SSVD \eqref{eq_ssvdcom}, the output is a pair of subsets $(\hat{S}_1, \hat{S}_2)$, while SPCA \eqref{eq_spcacom} only involves one subset. Given $\bm A\in \S_+^n$ and $s_1=s_1=s$, according to Lemma~\ref{lem:kyfan}, we can adapt Algorithm~\ref{algo:trun1}-Algorithm~\ref{algo:trun3} to solve SPCA \eqref{eq_spcacom} by comparing the objective values of $\bm A_{\hat{S}_1, \hat{S}_1}$ and $\bm A_{\hat{S}_2, \hat{S}_2}$ and outputting a better solution of $\hat{S}_1$ and $ \hat{S}_2$. %\todo{We do not need $\hat{S}_1$ and $ \hat{S}_2$ in Algorithm~\ref{algo:trun1spca}. Right?} 
%\yongchun{We omit the detailed implementations of the selection algorithms for SPCA \eqref{eq_spcacom} since it suffices to adjust the output and remain the rest of those algorithms for SSVD \eqref{eq_ssvdcom}.}
In particular, since matrix $\bm A$ is symmetric and positive semidefinite in SPCA \eqref{eq_spcacom},
%	 when implementing the second selection Algorithm~\ref{algo:trun2} for SPCA \eqref{eq_spcacom}, 
we can simplify the implementation of the selection Algorithm~\ref{algo:trun2} by using one loop on rows only;  and the third selection Algorithm~\ref{algo:trun3} can also be simplified to one loop only for solving SPCA \eqref{eq_spcacom} since the top left- and right- singular vectors of positive semidefinite matrices are the same. The details of the selection algorithms for SPCA \eqref{eq_spcacom} can be found in  Algorithm~\ref{algo:trun1spca},  Algorithm~\ref{algo:trun2spca}, and  Algorithm~\ref{algo:trun3spca}. 
Their corresponding approximation ratios and time complexities for   SPCA \eqref{eq_spcacom} are summarized in Corollary~\ref{cor_ratio}.

\begin{algorithm}[htb]
	\caption{Selection Algorithm for SPCA \eqref{eq_spcacom}}
	\label{algo:trun1spca}
	\begin{algorithmic}[1]
		\State  {\bfseries Input:} A matrix $\bm A \in \S_+^{ n}$ and  integers   $s_1 =  s_2 = s\in [n]$
		\State  Suppose $(\hat{S}_1, \hat{S}_2)$ solves the problem below
		\begin{align*}
		\ \max_{
			\begin{subarray}{c}
			S_1\subseteq[m],  S_2\subseteq [n]
			\end{subarray}
		} \left\{||\bm A_{S_1, S_2}||_F: |S_1| \le s_1,|S_2| \le s_2\right\}.
		\end{align*}
		\State  {\bfseries Output:} A better solution of SPCA \eqref{eq_spcacom} between $\hat{S}_1$ and $\hat{S}_2$
	\end{algorithmic}
\end{algorithm}

\begin{algorithm}[htb]
	\caption{Selection Algorithm for SPCA \eqref{eq_spcacom}}
	\label{algo:trun2spca}
	\begin{algorithmic}[1]
		\State {\bfseries Input:} A matrix $\bm A \in \S_+^n$,  integers   $s_1=s_2=s \in [n]$, and  $1\le k \le s$
		\For{$j\in [n]$}
		\State  Let $\hat{S}_1^j $ denote the index set of the $s_1$ largest absolute entries in $\bm A_{:,j}$
		\State  
		$
		\hat{S}_2^j \in \argmax_{S\subseteq [n]} \{\sum_{\ell \in S} ||\bm A_{\hat{S}_1^j, {\ell} }||_2: |S|=s_2 \}
		$		
		\EndFor
		
		\State Let $(\hat{S}_1, \hat{S}_2)$ denote the best solution for SSVD \eqref{eq_ssvdcom} among the candidate set $\{(\hat{S}_1^j, \hat{S}_2^j)\}_{j\in [n]} $
		\State  {\bfseries Output:} A better solution of SPCA \eqref{eq_spcacom} between $\hat{S}_1$ and $\hat{S}_2$
	\end{algorithmic}
\end{algorithm}

\begin{algorithm}[htb] 
	\caption{Selection Algorithm for SPCA \eqref{eq_spcacom}} \label{algo:trun3spca}
	\begin{algorithmic}[1]
		\State \textbf{Input:} A matrix $\bm A \in \S_+^n$,  integers   $s_1=s_2=s \in [n]$, and  $1\le k \le s$
		
		\State Compute the eigenvector  $\bm u_1$ corresponding to the largest eigenvalue of  $\bm A$
		
		\State Denote by $ \hat{S}_1$ the indices of $s_1$ largest absolute entries in $\bm u_1$
		\State Denote by $ \hat{S}_2$ the indices of $s_2$ largest absolute entries in $(\bm u_1)_{\hat{S}_1}^{\top} \bm A_{\hat{S}_1, [n]} $
		
		%		\State Denote by $ \hat{T}_2$ the indices  of $s_2$ largest absolute entries in $\bm v_1$
		%		\State Denote by $ \hat{T}_1$ the indices of $s_1$ largest absolute entries in $ \bm A_{[m], \hat{T}_2} (\bm v_1)_{\hat{T}_2} $
		
		% 		\State Compute the support
		% 		$$ \hat{S}_2 \in \argmax_{ S\subseteq [n]}  \bigg\{\sum_{j \in S}  |(\hat{\bm u}^{\top} \bm A)_{i}|: |S|=s_2\bigg\}.$$
		
		\State	\textbf{Output:} A better solution of SPCA \eqref{eq_spcacom} between $\hat{S}_1$ and $\hat{S}_2$
	\end{algorithmic}
\end{algorithm}

\begin{restatable}{corollary}{} \label{cor_ratio}
	The selection Algorithm~\ref{algo:trun1spca}-Algorithm~\ref{algo:trun3spca} for  SPCA \eqref{eq_spcacom} admit $1/\sqrt{s}$, $1/\sqrt{ks}$, and $s/(kn)$ approximation ratios, respectively, and all the ratios are tight {when $s\le n/2$}; Moreover, the selection Algorithm~\ref{algo:trun2spca} and Algorithm~\ref{algo:trun3spca}  admit $O(n^2log(n)+nks^2)$ and $O(n\log(n)+n^2)$ time complexities,  respectively.
\end{restatable}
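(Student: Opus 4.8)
The plan is to obtain Corollary~\ref{cor_ratio} as a direct specialization of the SSVD analysis in Section~\ref{sec:algo}, exploiting that each of Algorithm~\ref{algo:trun1spca}--Algorithm~\ref{algo:trun3spca} is the corresponding SSVD selection algorithm run on $\bm A\in\S_+^n$ with $m=n$ and $s_1=s_2=s$, together with two symmetry-driven simplifications (Algorithm~\ref{algo:trun2spca} keeps only the column loop of Algorithm~\ref{algo:trun2}, and Algorithm~\ref{algo:trun3spca} uses the single top eigenvector in place of the top left/right singular vectors of Algorithm~\ref{algo:trun3}). First I would record the bridge $w^*=w^{spca}$ between the combinatorial problems \eqref{eq_ssvdcom} and \eqref{eq_spcacom}: taking $S_1=S_2$ gives $w^*\ge w^{spca}$, while for any $(S_1,S_2)$ one may enlarge both sets to size exactly $s$ (singular-value interlacing shows adding rows or columns does not decrease the Ky Fan $k$-norm), after which Lemma~\ref{lem:kyfan} yields $\|\bm A_{S_1,S_2}\|_{(k)}\le\max\{\|\bm A_{S_1,S_1}\|_{(k)},\|\bm A_{S_2,S_2}\|_{(k)}\}\le w^{spca}$. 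Consequently, for the internal pair $(\hat S_1,\hat S_2)$ produced by any of the three algorithms, the returned set $\hat S\in\{\hat S_1,\hat S_2\}$ satisfies $\|\bm A_{\hat S,\hat S}\|_{(k)}\ge\|\bm A_{\hat S_1,\hat S_2}\|_{(k)}$ by Lemma~\ref{lem:kyfan}, so it suffices to lower-bound $\|\bm A_{\hat S_1,\hat S_2}\|_{(k)}$ by the claimed $\rho\,w^{spca}$.

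These lower bounds I would import almost verbatim from Theorems~\ref{thms1}--\ref{thms3}. For Algorithm~\ref{algo:trun1spca} the pair $(\hat S_1,\hat S_2)$ is exactly the output of Algorithm~\ref{algo:trun1}, so Theorem~\ref{thms1} with $\min\{s_1,s_2\}=s$ gives $\|\bm A_{\hat S_1,\hat S_2}\|_{(k)}\ge w^*/\sqrt s = w^{spca}/\sqrt s$. For Algorithm~\ref{algo:trun2spca} I would rerun the proof of Claim~\ref{claim1} using only the column loop: for the maximizer $(\bar S,j^*)$ of $\|\bm A_{S,j}\|_2$ over $|S|=s$, inequality \eqref{ineq1} still gives $\|\bm A_{\hat S_1,\hat S_2}\|_{(k)}\ge\sigma_1(\bm A_{\hat S_1^{j^*},\hat S_2^{j^*}})\ge\|\bm A_{\bar S,j^*}\|_2$, and the final chain in the proof of Theorem~\ref{thms2}, specialized to the optimal SPCA support $S^*$ (so $S_1^*=S_2^*=S^*$), bounds $w^{spca}=\|\bm A_{S^*,S^*}\|_{(k)}\le\sqrt k\,\|\bm A_{S^*,S^*}\|_F\le\sqrt{ks}\,\max_{j\in S^*}\|\bm A_{S^*,j}\|_2\le\sqrt{ks}\,\|\bm A_{\bar S,j^*}\|_2$; dropping the row loop is harmless because symmetry of $\bm A$ makes its row and column variants coincide. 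For Algorithm~\ref{algo:trun3spca}, the top left and right singular vectors of $\bm A\in\S_+^n$ both equal its top eigenvector, so $(\hat S_1,\hat S_2)$ is precisely the candidate produced by Algorithm~\ref{algo:trun3} and the $(\hat T_1,\hat T_2)$ candidate there is merely its transpose, already covered by the output comparison; Theorem~\ref{thms3} with $m=n$, $s_1=s_2=s$ then gives $\|\bm A_{\hat S_1,\hat S_2}\|_{(k)}\ge (s/(kn))\,w^{spca}$. Combined with the first paragraph this establishes the three ratios.

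For tightness I would symmetrize Examples~\ref{eg1}, \ref{eg2}, and~\ref{eg3} into block-diagonal PSD instances $\bm A=\mathrm{blkdiag}(\bm B,\bm G)$, where $\bm B$ is a decoy PSD block that the surrogate criterion prefers (e.g.\ $\bm 1_{s,s}$, which is PSD with a single nonzero eigenvalue, playing the role of the rank-one blocks in Examples~\ref{eg1}--\ref{eg3}) and $\bm G=\sum_{i\in[k]}c\,\bm q_i\bm q_i^\top$ is a scaled rank-$k$ PSD block whose $\bm q_i$ are orthonormal columns of a Hadamard matrix; the scalar $c$ is chosen so that $\bm B$ and $\bm G$ tie on the relevant surrogate (Frobenius norm for Algorithm~\ref{algo:trun1spca}, largest row/column $2$-norm for Algorithm~\ref{algo:trun2spca}, top eigenvector for Algorithm~\ref{algo:trun3spca}) while $\|\bm G\|_{(k)}=kc$ strictly dominates $\|\bm B\|_{(k)}$, and $s\le n/2$ leaves room for both blocks. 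Evaluating the two Ky Fan $k$-norms (using Observation~\ref{obs1} for $\bm G$) reproduces exactly $1/\sqrt s$, $1/\sqrt{ks}$, and $s/(kn)$, i.e.\ these are the symmetric specializations of Examples~\ref{eg1}--\ref{eg3} (the first with $k=s$). Finally, the time complexities follow from the accounting in Theorems~\ref{thms2} and~\ref{thms3} with a single loop of length $n$ and $m=n$, $s_1=s_2=s$: Algorithm~\ref{algo:trun2spca} performs $n$ rounds of two sortings ($O(n\log n)$ each) plus an $O(ks^2)$ truncated-SVD evaluation per candidate, giving $O(n^2\log n+nks^2)$, and Algorithm~\ref{algo:trun3spca} spends $O(n^2)$ on the top eigenvector, $O(n\log n)$ on the two sortings, and lower-order terms, giving $O(n\log n+n^2)$. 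The only non-routine point is verifying that these two simplifications preserve the approximation guarantee --- that symmetry and positive semidefiniteness are precisely what allow discarding the row loop of Algorithm~\ref{algo:trun2} and the second singular vector of Algorithm~\ref{algo:trun3} --- and that the tight examples can be taken PSD; everything else is a specialization of the rectangular case.
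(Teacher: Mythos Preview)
Your derivation of the three approximation ratios and the two time complexities is correct and is exactly the paper's argument: use Proposition~\ref{prop2} to get $w^{spca}=w^*$, invoke Theorems~\ref{thms1}--\ref{thms3} with $m=n$, $s_1=s_2=s$ to bound $\|\bm A_{\hat S_1,\hat S_2}\|_{(k)}$, and then apply Lemma~\ref{lem:kyfan} to pass from the rectangular output to the better principal submatrix. Your justification that symmetry lets you drop the row loop in Algorithm~\ref{algo:trun2spca} and the second singular vector in Algorithm~\ref{algo:trun3spca} is also the right one.

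The gap is in your tightness recipe. A block-diagonal ansatz $\bm A=\mathrm{blkdiag}(\bm B,\bm G)$ cannot achieve the tight ratio $s/(kn)$ for Algorithm~\ref{algo:trun3spca}. If the algorithm is to be fooled into the $\bm B$ block, the top eigenvector of $\bm A$ must lie there, forcing $\lambda_1(\bm G)\le\lambda_1(\bm B)$; hence $\|\bm G\|_{(k)}\le k\lambda_1(\bm B)\le k\|\bm B\|_{(k)}$, so the ratio you obtain is at least $1/k$, strictly larger than $s/(kn)=1/(2k)$ when $n=2s$. The paper's Example~\ref{eg7} (which is in fact the honest symmetrization of Example~\ref{eg3}) is \emph{not} block-diagonal: the decoy direction $[\bm 1_s;\bm H_{:,k+1}]$ deliberately straddles both halves so that all $k+1$ eigenvalues equal $n$, and only after truncating to $\hat S_1=[s]$ does the decoy collapse to the rank-one block $\bm 1_{s,s}$. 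Your description ``$\mathrm{blkdiag}(\bm B,\bm G)$'' thus contradicts your own claim that you are specializing Example~\ref{eg3}.

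A smaller issue affects Algorithm~\ref{algo:trun2spca}: with decoy $\bm B=\bm 1_{s,s}$ and $k=s$, the candidates coming from columns of the $\bm G$ block inevitably pick up a large piece of $\bm 1_{s,s}$ (since ties among the zero entries are broken into $[s]$), and a short computation gives output value $s-1+\sqrt s$ against optimum $s\sqrt s$, i.e.\ ratio $\approx 1/\sqrt s$ rather than the claimed $1/\sqrt{ks}=1/s$. The paper's Example~\ref{eg6} avoids this by taking the decoy to be a single unit $\bm e_1\bm e_1^\top$ rather than the dense $\bm 1_{s,s}$. So your high-level plan ``symmetrize the SSVD examples'' is right, but the uniform $\mathrm{blkdiag}(\bm 1_{s,s},\bm G)$ template does not implement it for Algorithms~\ref{algo:trun2spca} and~\ref{algo:trun3spca}; you need the specific constructions in Examples~\ref{eg6} and~\ref{eg7}.
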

\begin{proof}
	The proof includes two parts with respect to the approximation ratios and time complexities. Plugging $m=n$ and $s_1=s_2=s$ into the complexities of selection Algorithm~\ref{algo:trun2} and Algorithm~\ref{algo:trun3} for SSVD \eqref{eq_ssvdcom}, we obtain the desired time complexities of Algorithm~\ref{algo:trun2spca} and Algorithm~\ref{algo:trun3spca}.
	
	In the following, we will  focus on deriving the approximation ratios of the selection algorithms for  SPCA \eqref{eq_spcacom}, separately.  When matrix $\bm A \in \S_+^n$ and $s_1=s_2=s$, we have the following results.
	\begin{enumerate}[(i)]
		\item 	\textbf{Selection Algorithm~\ref{algo:trun1spca} for  SPCA \eqref{eq_spcacom}.}
		
		Suppose that the output of selection Algorithm~\ref{algo:trun1spca}  is $(\hat{S}_1, \hat{S}_2)$. 
		\begin{align*}
		w^{spca} = w^* \le \frac{1}{\sqrt{\min\{s_1, s_2\}}} ||\bm A_{\hat{S}_1, \hat{S}_2}||_{(k)}  \le \frac{1}{\sqrt{s}} \max\left\{||\bm A_{\hat{S}_1, \hat{S}_1}||_{(k)}, ||\bm A_{\hat{S}_2, \hat{S}_2}||_{(k)} \right\},
		\end{align*}
		where the first inequality is due to Theorem~\ref{thms1} and the second one is from Lemma~\ref{lem:kyfan}.
		
		Thus the modified selection algorithm yields the $1/\sqrt{s}$-approximation ratio for  SPCA \eqref{eq_spcacom}. The ratio is tight since  the following example can achieve it.
		\begin{example} \label{eg5}
			Suppose that $k=s$, $n =2s$, and matrix $\bm A \in \S_+^n$ is defined by
			\begin{align*}
			\bm A = \begin{pmatrix}
			\bm 1_{s, s} & \bm 0_{s, s}\\
			\bm 0_{s, s} & \bm D
			\end{pmatrix}, \bm D \in \S_+^s:= \sum_{i\in [k]} \sqrt{s} \bm e_i \bm e_i^{\top},
			\end{align*}
			where $\bm e_i \in \Re^{s}$ is the $i$th column of the identity matrix $\bm I_{s}$.
		\end{example}
		In Example~\ref{eg5},	the selection Algorithm~\ref{algo:trun1spca} returns a solution $\hat{S}_1 = [s]$ and $\hat{S}_2 = [s]$ since $||\bm 1_{s, s}||_F= ||\bm D||_F $, and the resulting objective value of  SPCA \eqref{eq_spcacom}  is $$\max\left\{||\bm A_{\hat{S}_1, \hat{S}_1}||_{(k)}, ||\bm A_{\hat{S}_2, \hat{S}_2}||_{(k)} \right\} = ||\bm 1_{s, s}||_{(k)}=s.$$
		However, the true optimal value of SPCA \eqref{eq_spcacom} is $||\bm D||_{(k)}=k\sqrt{s} = s\sqrt{s}$, which thus shows that the $1/\sqrt{s}$-ratio is tight.
		
		\item 	\textbf{Selection Algorithm~\ref{algo:trun2spca} for  SPCA \eqref{eq_spcacom}.}
		
		Suppose that the output of selection Algorithm~\ref{algo:trun2spca}  is $(\hat{S}_1, \hat{S}_2)$, similarly, we have
		\begin{align*}
		w^{spca} = w^* \le \frac{1}{\sqrt{k\min\{s_1, s_2\}}} ||\bm A_{\hat{S}_1, \hat{S}_2}||_{(k)}  \le \frac{1}{\sqrt{ks}} \max\left\{||\bm A_{\hat{S}_1, \hat{S}_1}||_{(k)}, ||\bm A_{\hat{S}_2, \hat{S}_2}||_{(k)} \right\}.
		\end{align*}
		%		where the first inequality is from Theorem~\ref{thms2} and the second one is from \ref{lem:kyfan}.
		
		%		Thus the selection Algorithm~\ref{algo:trun2} yields the $1/\sqrt{ks}$-approximation ratio for  SPCA \eqref{eq_spcacom}. 
		The ratio is tight since  the following example can achieve it.
		\begin{example} \label{eg6}
			Suppose that $k=s$, $n =2s$, and matrix $\bm A \in \S_+^n$ is defined by
			\begin{align*}
			\bm A = \bm e_1 \bm e_1^{\top} + \sum_{ i \in [s+1, n]} \bm e_i \bm e_i^{\top},
			\end{align*}
			where for each $i\in [n]$, let $\bm e_i \in \Re^{n}$ denote the $i$th column of the identity matrix $\bm I_{n}$.
		\end{example}
		In Example~\ref{eg6},	the selection Algorithm~\ref{algo:trun2spca} proceeds as follows: (a) for each $j\in [s]$, we can choose $\hat{S}_1^j = \hat{S}_2^j = [s]$ and the resulting objective value is equal to 1; (b) for each $j\in [s+1, n]$, we can choose $\hat{S}_1^j = \hat{S}_2^j = [2,s]\cup\{j\}$ and the resulting objective value is also equal to 1. In this way, the final objective value of  SPCA \eqref{eq_spcacom}  is equal to 1.
		However, the true optimal value of SPCA \eqref{eq_spcacom} is $||\bm  A_{[s+1, n], [s+1, n]}||_{(k)}=k = s$, which thus shows that the $1/\sqrt{ks}$-ratio is tight when $s\le n/2$.
		
		\item 	\textbf{Selection Algorithm~\ref{algo:trun3spca} for  SPCA \eqref{eq_spcacom}.}
		
		Suppose that the output of selection Algorithm~\ref{algo:trun3spca}  is $(\hat{S}_1, \hat{S}_2)$, then we have
		\begin{align*}
		w^{spca} = w^* \le \frac{\sqrt{s_1s_2}}{k \sqrt{mn}} ||\bm A_{\hat{S}_1, \hat{S}_2}||_{(k)}  \le \frac{s}{kn} \max\left\{||\bm A_{\hat{S}_1, \hat{S}_1}||_{(k)}, ||\bm A_{\hat{S}_2, \hat{S}_2}||_{(k)} \right\}.
		\end{align*}
		
		The ratio is tight since  the following example can achieve it.
		\begin{example} \label{eg7}
			For any positive integer $k>0, t>0$,	suppose that $s=2^t$ satisfying $s\ge k+1$. Let us define $n =2s$ and matrix $\bm A \in \S_+^n$ as
			\begin{align*}
			\bm A =  \sum_{i \in [k]}  \frac{n}{s} \begin{pmatrix}
			\bm 0_{s} \\
			\bm H_{:,i}
			\end{pmatrix}  \begin{pmatrix}
			\bm 0_{s} \\
			\bm H_{:,i}
			\end{pmatrix}^{\top} + \begin{pmatrix}
			\bm 1_{s} \\
			\bm H_{:,k+1}
			\end{pmatrix}  \begin{pmatrix}
			\bm 1_{s} \\
			\bm H_{:,k+1}
			\end{pmatrix}^{\top} , 
			\end{align*}
			where $\bm H \in \Re^{s, s}$ is a Hadamard matrix of order $2^{t}$.
		\end{example}
		In Example~\ref{eg7}, in the construction of matrix $\bm A$, the $(k+1)$ vectors are  orthogonal because the columns of a Hadamard matrix are orthogonal. It follows that all the non-zero eigenvalues of $\bm A$ are equal to $n$ since $n=2s$. Thus, a top eigenvector can be 
		\begin{align*}
		\bm u_1 = \frac{1}{\sqrt{n}} \begin{pmatrix}
		\bm 1_{s} \\
		\bm H_{:,k+1}
		\end{pmatrix}.
		\end{align*}
		Given  vector $\bm u_1$,  Algorithm~\ref{algo:trun3spca} selects $\hat{S}_1 = [s]$ at Step 3 and $\hat{S}_2 = [s]$ at Step 4, respectively. Since $\hat{S}_1 = \hat{S}_2 = [s]$, the objective value of SPCA \eqref{eq_spcacom}  is equal to $||\bm A_{[s], [s]}||_{(k)} = ||\bm 1_{s, s}||_{(k)} =s$. 
		
		However, the true optimal submatrix of SSVD \eqref{eq_ssvdcom} is $\bm A_{[s+1, 2s], [s+1, 2s]} $ and the resulting optimal value is equal to
		\begin{align*}
		||\bm A_{[s+1, 2s], [s+1, 2s]} ||_{(k)} = \bigg|\bigg|\sum_{ i \in [k]} \frac{n}{s} \bm H_{:,i} \bm H_{:,i}^{\top} + \bm H_{:,k+1} \bm H_{:,k+1}^{\top} \bigg| \bigg|_{(k)} = kn,
		\end{align*}
		which confirms that the $s/(kn)$-approximation ratio is tight. \qed
	\end{enumerate}
\end{proof}
%
%\begin{restatable}{corollary}{}
%	The selection Algorithm~\ref{algo:trun2} and Algorithm~\ref{algo:trun3} for  SPCA \eqref{eq_spcacom} have time complexities of $O(n^2\log(n)+nks^2)$ and $O(n^2)$, respectively.
%\end{restatable}
%\begin{proof}
%	Plugging $m=n$ and $s_1=s_2$ into the results, we obtain the desired time complexities.
%\end{proof}

\subsection{Searching Algorithms of SPCA}
%Different from greedy Algorithm~\ref{algo:svd_greedy} and local search Algorithm~\ref{algo:svd_localsearch}, we 
The positive semidefiniteness of matrix $\bm A$ allows us to simplify the implementations and improves theoretical performances of the greedy and local search algorithms for SPCA \eqref{eq_spcacom} compared to SSVD \eqref{eq_ssvdcom}. The detailed procedures are presented in Algorithm~\ref{algo:spca_greedy} and Algorithm~\ref{algo:spca_localsearch}. It is worthy of mentioning that at the $\ell$th iteration of  the greedy Algorithm~\ref{algo:spca_greedy},  if $\ell\leq k$, Step 4  reduces to
\[i^* \in \argmax_{i \in[n]\setminus \hat{S}  }
\tr(\bm A_{\hat{S} \cup\{i\}, \hat{S}\cup \{i\}  }) =\argmax_{i \in[n]\setminus \hat{S}  }
A_{ii} .\]
Thus, at the first $k$ iterations, the greedy Algorithm~\ref{algo:spca_greedy} selects the $k$ largest diagonal elements in matrix $\bm A$. 

\begin{algorithm}[h]
	\caption{Greedy Algorithm for SPCA \eqref{eq_spcacom}}
	\label{algo:spca_greedy}
	\begin{algorithmic}[1]
		\State \textbf{Input:} A matrix $\bm A \in \S_+^n$,  integers   $s \in [n]$, and  $1\le k \le s$
		
		\State  Define the subset $\hat{S} := \emptyset$ 
		
		\For{$\ell \in [s]$}
		
		\State 
		$i^* \in \argmax_{i \in[n]\setminus \hat{S}  }
		||\bm A_{\hat{S} \cup\{i\}, \hat{S}\cup \{i\}  }||_{(\min\{\ell, k\})}
		$
		\State Update  $\hat{S} := \hat{S} \cup \{i^*\}$ 
		
		\EndFor
		
		\State  \textbf{Output:} $\hat{S}  $%$(\bar{x},\bar{w})$ 
	\end{algorithmic}
\end{algorithm}

\begin{algorithm}[h]
	\caption{Local Search Algorithm for SPCA \eqref{eq_spcacom}}
	\label{algo:spca_localsearch}
	\begin{algorithmic}[1]
		\State \textbf{Input:} A matrix $\bm A \in \S_+^n$,  integers   $s \in [n]$, and  $1\le k \le s$
		%		\State Let $\bm{A}=\bm{C}^{\top}\bm{C}$ denote its Cholesky factorization where $\bm{C} \in \Re^{d\times n}$
		%		\State Let $\bm{c}_i \in \Re^d$ denote the $i$-th column vector of matrix $\bm{C}$ for each $i \in [n]$
		\State Initialize $\hat{S}$ as the output of greedy Algorithm~\ref{algo:spca_greedy}
		\State \textbf{do}
		\For {each pair {$(i,j) \in \hat{S} \times ([n]\setminus \hat{S})$}}
		\If{$||\bm A_{\hat{S}\cup \{j\} \setminus \{i\}, \hat{S}\cup \{j\} \setminus \{i\}} ||_{(k)} > || \bm A_{\hat{S},\hat{S}}||_{(k)}$}
		\State Update $\hat{S} := \hat{S} \cup \{j\} \setminus \{i\}$
		\EndIf
		\EndFor
		
		\State\textbf{while} {there is still an improvement}
		\State \textbf{Output:} $\hat{S}$%$(\bar{x},\bar{w})$
	\end{algorithmic}
\end{algorithm}
We also derive the approximation ratios and time complexities of greedy Algorithm~\ref{algo:spca_greedy} and local search Algorithm~\ref{algo:spca_localsearch} for SPCA.
\begin{corollary}
	The greedy Algorithm~\ref{algo:spca_greedy} and local search  Algorithm~\ref{algo:spca_localsearch} for solving SPCA \eqref{eq_spcacom} admit a $k/s$-approximation ratio and the ratio is tight for both algorithms {when $s\le n/2$}. Moreover, the greedy Algorithm~\ref{algo:spca_greedy} and local search  Algorithm~\ref{algo:spca_localsearch} have time complexities of $O(nks^3)$ and $O( nks^3L/\delta)$, respectively.
\end{corollary}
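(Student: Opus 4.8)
The plan is to specialize the SSVD analysis to the symmetric positive semidefinite setting, using Lemma~\ref{lem:kyfan} and the structure of the greedy iterates rather than redoing the arguments of Theorems~\ref{thmg} and~\ref{thmls} from scratch. For the $k/s$ ratio of the greedy Algorithm~\ref{algo:spca_greedy}, the key observation (already recorded in the remark preceding the corollary) is that for $\ell\le k$ the Ky Fan $\min\{\ell,k\}$-norm of an $\ell\times\ell$ PSD matrix equals its trace, so the first $k$ iterations pick the index set $\hat S_k$ of the $k$ largest diagonal entries $d_1\ge\cdots\ge d_k$ of $\bm A$, with $\|\bm A_{\hat S_k,\hat S_k}\|_{(k)}=\tr(\bm A_{\hat S_k,\hat S_k})=\sum_{i\in[k]}d_i$. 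Since adding rows and columns cannot decrease the Ky Fan $k$-norm of a PSD principal submatrix (Cauchy interlacing), the final output $\hat S\supseteq\hat S_k$ obeys $\|\bm A_{\hat S,\hat S}\|_{(k)}\ge\sum_{i\in[k]}d_i$. On the other hand, if $S^*$ is an optimal index set of SPCA~\eqref{eq_spcacom}, then positive semidefiniteness gives $w^{spca}=\|\bm A_{S^*,S^*}\|_{(k)}\le\tr(\bm A_{S^*,S^*})=\sum_{i\in S^*}A_{ii}\le\sum_{i\in[s]}d_i$, and a one-line rearrangement (each of $d_1,\dots,d_k$ dominates each of $d_{k+1},\dots,d_s$, so $k\sum_{i\in[k+1,s]}d_i\le(s-k)\sum_{i\in[k]}d_i$) yields $\sum_{i\in[s]}d_i\le\tfrac sk\sum_{i\in[k]}d_i$. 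Chaining, $w^{spca}\le\tfrac sk\|\bm A_{\hat S,\hat S}\|_{(k)}$.

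For the local search Algorithm~\ref{algo:spca_localsearch}, the argument is the same as in Theorem~\ref{thmls}: it is initialized with the greedy solution and only accepts strictly improving swaps, hence its output Ky Fan $k$-norm is at least the greedy value, and the $k/s$ bound is inherited. The time complexities then come from routine counting: at the $\ell$th greedy iteration one evaluates $\|\bm A_{\hat S\cup\{i\},\hat S\cup\{i\}}\|_{(\min\{\ell,k\})}$ over $i\in[n]\setminus\hat S$, i.e., $O(n)$ truncated eigendecompositions of matrices of size at most $s\times s$, each costing $O(ks^2)$ by \cite{shishkin2019fast}; over $s$ iterations this is $O(nks^3)$. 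For local search, specializing Proposition~\ref{ls_time} to $m=n$ and $s_1=s_2=s$ bounds the number of accepted swaps by $O(L/\delta)$ and the cost per swap by $O(ns)$ such eigendecompositions, giving $O(nks^3L/\delta)$.

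The substantive part is the tightness when $s\le n/2$. Here I would construct a worst-case instance in the spirit of Examples~\ref{eg5}--\ref{eg7}: a block-diagonal $\bm A\in\S_+^n$ with $n=2s$ in which the $k$ largest-diagonal indices form a decoupled block that greedy is forced to pick (adversarial tie-breaking in the first $k$ steps is permitted, since the claim is that $k/s$ is the best \emph{provable} ratio), the optimal submatrix is hidden in a separate block whose Ky Fan $k$-norm is a factor $\tfrac sk$ larger but whose own diagonal entries are not large enough to attract greedy, and every single-index augmentation — and hence every single swap performed by the local search — fails to raise the Ky Fan $k$-norm above the greedy value. Verifying that last property is where the delicacy lies: because greedy is adaptive and local search repairs myopic mistakes, one must arrange the hidden block so that \emph{all} of its small principal submatrices have spectral norm below the $k$th largest diagonal entry of $\bm A$, so that neither a greedy step nor a local swap ever exposes its rank-$k$ structure. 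I expect this construction, rather than the ratio or the complexity bookkeeping, to be the main obstacle; the rest follows directly from Lemma~\ref{lem:kyfan}, Cauchy interlacing, and the corresponding SSVD results.
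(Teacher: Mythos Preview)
Your derivation of the $k/s$ ratio, the local-search inheritance argument, and the time-complexity bookkeeping are essentially identical to the paper's proof; the paper's chain is exactly $w^{spca}\le\tr(\bm A_{S^*,S^*})\le\tfrac{s}{k}\sum_{i\in T}A_{ii}\le\tfrac{s}{k}\|\bm A_{\hat S,\hat S}\|_{(k)}$, with $T$ the index set of the $k$ largest diagonals (your Cauchy-interlacing step is the clean way to justify the last inequality, which the paper leaves implicit). The reference to Lemma~\ref{lem:kyfan} in your plan is unnecessary here, since SPCA already restricts to principal submatrices.

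The only place you diverge is the tightness construction, which you describe as the ``main obstacle'' and for which you anticipate needing a delicate arrangement so that all small principal submatrices of the hidden block have spectral norm strictly below the $k$th largest diagonal of $\bm A$. The paper's Example~\ref{eg8} shows this worry is unfounded: with $n=2s$ one simply takes
\[
\bm A=\begin{pmatrix}\bm I_s & \bm 0_{s,s}\\ \bm 0_{s,s} & \bm 1_{s,s}\end{pmatrix}.
\]
All diagonal entries equal $1$, so under adversarial tie-breaking greedy selects $\hat S=[s]$ with $\|\bm A_{\hat S,\hat S}\|_{(k)}=\|\bm I_s\|_{(k)}=k$; swapping any $i\in[s]$ for any $j\in[s+1,2s]$ yields a submatrix that is still block-diagonal with blocks $\bm I_{s-1}$ and the $1\times 1$ entry $1$, hence again Ky Fan $k$-norm $k$, so local search never moves. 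The optimum is $\|\bm 1_{s,s}\|_{(k)}=s$. No spectral-norm engineering is needed because a single row/column of $\bm 1_{s,s}$, isolated from the rest of that block, contributes only its diagonal entry $1$ to the spectrum---exactly matching, not beating, the identity block.
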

\begin{proof}
	Let us first analyze the approximation ratio of greedy Algorithm~\ref{algo:spca_greedy}.
	To begin with, let $T$ denote the index set of $k$ largest diagonal elements in matrix $\bm A$.
	Suppose that an optimal solution to SPCA \eqref{eq_spcacom} is set $S^*$ and the output of greedy Algorithm~\ref{algo:spca_greedy} is $\hat{S}$, then we have
	\begin{align*}
	w^{spca} = ||\bm A_{S^*, S^*}||_{(k)} = \sum_{ i \in [k]} \lambda_i(\bm A_{S^*, S^*}) \le \tr (\bm A_{S^*, S^*}) = \sum_{ i \in S^*} A_{ii} \le \frac{k}{s} \sum_{ i \in T} A_{ii} \le \frac{k}{s} ||\bm A_{\hat{S}, \hat{S}}||_{(k)}, 
	\end{align*}
	where the last inequality is because at the first $k$ iterations, greedy Algorithm~\ref{algo:spca_greedy} picks the $k$ largest diagonal elements of matrix $\bm A$. Thus, the greedy Algorithm~\ref{algo:spca_greedy} yields the $k/s$-approximation ratio for  SPCA \eqref{eq_spcacom}. The ratio is tight since  the following example can achieve it.
	\begin{example}\label{eg8}
		For any positive integers $s\geq k>0$, let $n=2s$ and matrix $\bm A \in \S_+^n$ be
		\begin{align*}
		\bm A = \sum_{ i \in [s]} \bm e_i \bm e_i^{\top} + \sum_{ i \in [k]} \begin{pmatrix}
		\bm 0_s \\
		\bm 1_{s}
		\end{pmatrix} \begin{pmatrix}
		\bm 0_s \\
		\bm 1_{s}
		\end{pmatrix}^{\top}  = \begin{pmatrix}
		\bm I_s & \bm 0_{s, s}\\
		\bm 0_{s,s}& \bm 1_{s, s}
		\end{pmatrix},
		\end{align*}
		where for each $i\in [s]$, vector $\bm e_i \in \Re^n$ denotes the $i$th column of the identity matrix $\bm I_n$.
	\end{example}
	
	In Example~\ref{eg8}, the greedy Algorithm~\ref{algo:spca_greedy} outputs the solution $\hat{S} = [s]$ with the objective value of SPCA \eqref{eq_spcacom} equal to $k$. However, the true optimal value SPCA \eqref{eq_spcacom} is $||\bm A_{[s+1, n], [s+1, n]}||_{(k)} =  s$, which thus indicates that the $k/s$-approximation ratio is tight when $s\le n/2$.
	
	Now, let us focus on deriving the approximation ratio of the local search Algorithm~\ref{algo:spca_localsearch}. As an improved heuristic method, the local search Algorithm~\ref{algo:spca_localsearch} produces at least $k/s$-approximation ratio. Furthermore, in Example~\ref{eg8}, the local search Algorithm~\ref{algo:spca_localsearch} cannot improve the solution $\hat{S} = [s]$ from the greedy Algorithm~\ref{algo:spca_greedy} and thus the $k/s$-approximation ratio is also tight.
	
	Plugging $m=n$ and $s_1=s_2$ into the time complexities for greedy Algorithm~\ref{algo:svd_greedy} and local search Algorithm~\ref{algo:svd_localsearch} of SSVD, we obtain the corresponding time complexities for SPCA. \qed
\end{proof}

%\begin{restatable}{corollary}{} 
%	The greedy Algorithm~\ref{algo:spca_greedy} and local search Algorithm~\ref{algo:spca_localsearch} for  SPCA \eqref{eq_spcacom} have time complexities of $O(nks^3)$ and $O( nks^3L/\delta)$, respectively.
%\end{restatable}
%\begin{proof}
%	Plugging $m=n$ and $s_1=s_2$ into the results in \ref{g_time} and \ref{ls_time}, we obtain the desired time complexities.
%\end{proof}

\section{Branch-and-Cut Algorithms for Solving SPCA and SSVD to Optimality}
\label{sec:bc}
In this section, we develop branch-and-cut algorithms with closed-form cuts to solve SPCA \eqref{eq_spca} and SSVD \eqref{eq_ssvdcom}, respectively.
\subsection{Branch-and Cut-Algorithm for SPCA}
The  branch-and-cut algorithm is built on an equivalent MILP formulation of SPCA as below. %by decomposing the second exact MISDP \eqref{eq_spcasdp2}
	\begin{restatable}{theorem}{thmbc}\label{thm_bc}
	The	SPCA \eqref{eq_spcasdp2} is equivalent to
	\begin{align}\label{eq_spcabc}
	\text{\rm (SPCA)} \ \   w^{spca}:=\max_{\bm z \in \{0,1\}^n, w \in \Re}\bigg\{w: \sum_{ i \in [n]}z_i \le s, w\le ||\bm A_{S, S}||_{(k)} + \sum_{i\in [n]\setminus S} A_{ii} z_i, \forall S\subseteq [n], |S|\le s \bigg\}.
	\end{align}
\end{restatable}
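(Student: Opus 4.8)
The plan is to prove that the optimal value of \eqref{eq_spcabc} equals $w^{spca}$; since \eqref{eq_spcasdp2} has value $w^{spca}$ by Corollary~\ref{cor2} and, by the combinatorial reformulation \eqref{eq_spcacom}, $w^{spca}=\max_{S\subseteq[n],\,|S|\le s}||\bm A_{S,S}||_{(k)}$, this gives the stated equivalence. Fix a binary vector $\bm z$ with $\sum_{i\in[n]}z_i\le s$ and write $S_{\bm z}:=\{i\in[n]:z_i=1\}$, so $|S_{\bm z}|\le s$. For this $\bm z$ the largest $w$ admissible in \eqref{eq_spcabc} is $\min_{S\subseteq[n],\,|S|\le s}\big(||\bm A_{S,S}||_{(k)}+\sum_{i\in S_{\bm z}\setminus S}A_{ii}\big)$; choosing $S=S_{\bm z}$ in this minimum bounds it by $||\bm A_{S_{\bm z},S_{\bm z}}||_{(k)}\le w^{spca}$, so the value of \eqref{eq_spcabc} is at most $w^{spca}$.

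For the reverse inequality, let $S^*$ be an optimal set for \eqref{eq_spcacom} and take $\bm z$ with $z_i=1$ for $i\in S^*$ and $z_i=0$ otherwise. It suffices to certify that $(\bm z,w^{spca})$ is feasible for \eqref{eq_spcabc}, i.e., that for every $S\subseteq[n]$ with $|S|\le s$,
\begin{align*}
||\bm A_{S^*,S^*}||_{(k)}\le||\bm A_{S,S}||_{(k)}+\sum_{i\in S^*\setminus S}A_{ii}.
\end{align*}
Since $\bm A_{S\cap S^*,\,S\cap S^*}$ is a principal submatrix of $\bm A_{S,S}$, iterated Cauchy interlacing (eigenvalues only decrease when passing to a principal submatrix) gives $||\bm A_{S\cap S^*,\,S\cap S^*}||_{(k)}\le||\bm A_{S,S}||_{(k)}$, and since $S^*\setminus S=S^*\setminus(S\cap S^*)$ it is enough to prove the inequality with $S$ replaced by $T:=S\cap S^*\subseteq S^*$. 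Thus everything reduces to the following.

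\textbf{Key lemma.} For $\bm A\in\S_+^n$, any $T\subseteq S^*\subseteq[n]$ and any integer $k\ge1$, one has $||\bm A_{S^*,S^*}||_{(k)}\le||\bm A_{T,T}||_{(k)}+\sum_{i\in S^*\setminus T}A_{ii}$. I would prove this by induction on $|S^*\setminus T|$, the essential case being deletion of a single index: for a set $U$ and $j\notin U$, $||\bm A_{U\cup\{j\},\,U\cup\{j\}}||_{(k)}\le||\bm A_{U,U}||_{(k)}+A_{jj}$. Put $\bm B:=\bm A_{U\cup\{j\},\,U\cup\{j\}}$ (a principal submatrix of $\bm A\succeq0$, hence positive semidefinite) and let $\bm B':=\bm A_{U,U}$, the principal submatrix obtained from $\bm B$ by deleting row and column $j$; extend the eigenvalue list of $\bm B'$ by one trailing zero so both matrices carry $|U|+1$ eigenvalues. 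Since $\bm B\succeq0$, singular values coincide with eigenvalues, and Cauchy interlacing yields $\lambda_i(\bm B)\ge\lambda_i(\bm B')\ge0$ for every $i$. Because every term below is nonnegative, the partial sum is dominated by the full sum, and the full sum collapses by the trace identity:
\begin{align*}
||\bm B||_{(k)}-||\bm B'||_{(k)}=\sum_{i=1}^{k}\big(\lambda_i(\bm B)-\lambda_i(\bm B')\big)\le\sum_{i=1}^{|U|+1}\big(\lambda_i(\bm B)-\lambda_i(\bm B')\big)=\tr(\bm B)-\tr(\bm B')=A_{jj}.
\end{align*}
Iterating this single-deletion bound along any chain from $S^*$ down to $T$ proves the key lemma, hence the feasibility of $(\bm z,w^{spca})$, so the value of \eqref{eq_spcabc} is at least $w^{spca}$; combined with the first inequality, it equals $w^{spca}$, which is Theorem~\ref{thm_bc}.

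The main obstacle is the key lemma, and specifically picking the right tool for it. A naive attempt --- writing $\bm B$ as $\bm B'$ padded by a zero row and column plus the correction matrix obtained from $\bm B$ by zeroing out its $(U,U)$-subblock, then applying subadditivity of $||\cdot||_{(k)}$ --- fails, because that correction matrix is indefinite and its largest singular value can exceed $A_{jj}$. The clean argument instead couples Cauchy interlacing (which pins down each eigenvalue individually and, crucially, makes every telescoped difference $\lambda_i(\bm B)-\lambda_i(\bm B')$ nonnegative) with the trace identity $\tr(\bm B)=\tr(\bm B')+A_{jj}$ (which controls the total of those differences). One should also mind the degenerate conventions --- when $|S|<k$ the Ky Fan $k$-norm is just the sum of all $|S|$ singular values --- which the uniform zero-padding handles automatically.
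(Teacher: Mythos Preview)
Your proof is correct and takes a genuinely different route from the paper's. The paper proves Theorem~\ref{thm_bc} by going through the MISDP \eqref{eq_spcasdp2}: it separates out $\bm z$, takes the partial Lagrangian dual of the inner SDP with respect to the $\bm W_i$-constraints, and then exhibits, for each $S\subseteq[n]$ with $|S|\le s$, an explicit dual-feasible point $\bm Q_i=\bm c_i\bm c_i^{\top}$, $\mu_i=0$ for $i\in S$ and $\bm Q_i=\bm 0$, $\mu_i=\bm c_i^{\top}\bm c_i=A_{ii}$ for $i\notin S$. Weak duality then yields exactly the inequalities in \eqref{eq_spcabc}, and tightness at $S=\hat S$ (the support of $\bm z$) closes the loop. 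In short, the paper derives the cuts as Lagrangian cuts and their validity comes for free from weak duality.

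You bypass the SDP entirely and work purely from the combinatorial description \eqref{eq_spcacom}, reducing everything to a spectral monotonicity fact: your key lemma $||\bm A_{S^*,S^*}||_{(k)}\le||\bm A_{T,T}||_{(k)}+\sum_{i\in S^*\setminus T}A_{ii}$ for $T\subseteq S^*$, proved by Cauchy interlacing plus the trace telescoping identity. This is more elementary (no Cholesky factor, no conic duality) and arguably more transparent about \emph{why} the diagonal coefficients $A_{ii}$ appear in the cuts. The paper's approach, on the other hand, explains where the cut family comes from structurally---as a restriction of the full Lagrangian dual to a finite family of dual certificates---which is useful if one wants to strengthen the cuts or port the idea to other objectives. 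Both arguments are clean; yours is the shorter path to this particular theorem.
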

\begin{proof}
See Appendix \ref{proof:thm_bc}. \qed
\end{proof}

Note that SPCA \eqref{eq_spcabc} is amenable to the branch-and-cut method. That is, we start with a relaxed master problem with a small subset of constraints in SPCA \eqref{eq_spcabc} and given an incumbent solution $\hat{\bm z}$ with its support $\hat{S}$, we add the most violated constraint (i.e.,
$
w \le ||\bm A_{\hat{S}, \hat{S}}||_{(k)} +\sum_{i\in [n]\setminus \hat{S}} A_{ii}  {z}_i
$)
into the relaxed master problem and then repeat the solution procedure until the objective value of the relaxed problem is equal to the best lower bound. This solution procedure will terminate after a finite number of iterations.

\subsection{Branch-and-Cut Algorithm for SSVD}
Using the Cholesky decomposition of matrix $\bm A$, SPCA \eqref{eq_spca} possesses an MISDP \eqref{eq_spcasdp2}, which enables us to develop a branch-and-cut algorithm. However, this is difficult to directly extend to SSVD \eqref{eq_ssvdcom} since the matrix $\bm A$ may not even be symmetric, nor does its Cholesky decomposition exist. Fortunately, the following key lemma helps address this difficulty.
\begin{lemma}\label{lem:aug}
	For any matrix $\bm B \in \Re^{m\times n}$, let us define its augmented version $\overline{\bm B}\in \Re^{m+n, m+n}$ as below
	$$\overline{\bm B}\in \Re^{m+n, m+n} := \begin{pmatrix}
	\bm 0_{m, m} & \bm B\\
	\bm B^{\top} & \bm 0_{n, n}
	\end{pmatrix} + t \bm I_{m+n},$$ 
	for some constant $t\in \Re$. Then we have 
	\begin{enumerate}[(i)]
		\item The eigenvalues of matrix $\bm B$ are equal to
		\begin{align*}
		\lambda_i(\overline{\bm B}) = \begin{cases}
		t + \sigma_{i}(\bm B) , & \forall i\in [r],\\
		t, & \forall i \in [r+1, m+n-r],\\
		t - \sigma_{m+n+1-i}(\bm B), & \forall i \in [m+n-r+1, m+n];
		\end{cases}
		\end{align*}
		\item The matrix $\overline{\bm B}$ is positive semidefinite if $t\ge \sigma_{1}(\bm B)$.
	\end{enumerate}
\end{lemma}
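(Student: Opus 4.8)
The plan is to diagonalize $\overline{\bm B}$ explicitly through the singular value decomposition of $\bm B$, via the classical Jordan--Wielandt dilation. First I would fix the SVD $\bm B = \sum_{i\in[r]}\sigma_i(\bm B)\,\overline{\bm u}_i\overline{\bm v}_i^{\top}$ and extend $\{\overline{\bm u}_i\}_{i\in[r]}$ to an orthonormal basis $\{\overline{\bm u}_i\}_{i\in[m]}$ of $\Re^m$ and $\{\overline{\bm v}_i\}_{i\in[r]}$ to an orthonormal basis $\{\overline{\bm v}_i\}_{i\in[n]}$ of $\Re^n$. The SVD relations then give $\bm B\,\overline{\bm v}_i = \sigma_i(\bm B)\,\overline{\bm u}_i$ and $\bm B^{\top}\overline{\bm u}_i = \sigma_i(\bm B)\,\overline{\bm v}_i$ for $i\in[r]$, while $\bm B^{\top}\overline{\bm u}_i = \bm 0$ for $i\in[r+1,m]$ and $\bm B\,\overline{\bm v}_j = \bm 0$ for $j\in[r+1,n]$.

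Writing $\bm M := \overline{\bm B} - t\bm I_{m+n}$ for the pure off-diagonal block matrix appearing in the definition of $\overline{\bm B}$, I would then exhibit a complete orthonormal eigenbasis of $\bm M$: for each $i\in[r]$, the vectors $\frac{1}{\sqrt2}(\overline{\bm u}_i^{\top},\ \overline{\bm v}_i^{\top})^{\top}$ and $\frac{1}{\sqrt2}(\overline{\bm u}_i^{\top},\ -\overline{\bm v}_i^{\top})^{\top}$ are eigenvectors with eigenvalues $+\sigma_i(\bm B)$ and $-\sigma_i(\bm B)$ respectively---a one-line block matrix--vector computation using the SVD relations above---and for $i\in[r+1,m]$ the vector $(\overline{\bm u}_i^{\top},\ \bm 0^{\top})^{\top}$ and for $j\in[r+1,n]$ the vector $(\bm 0^{\top},\ \overline{\bm v}_j^{\top})^{\top}$ lie in the kernel of $\bm M$. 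These $2r+(m-r)+(n-r)=m+n$ vectors are pairwise orthonormal (the two vectors of the $i$th pair are orthogonal because $\|\overline{\bm u}_i\|_2^2=\|\overline{\bm v}_i\|_2^2=1$, and all remaining pairs are orthogonal since the extended bases are orthonormal), hence they form an eigenbasis of $\Re^{m+n}$ and we have captured every eigenvalue of $\bm M$: namely $\{\pm\sigma_i(\bm B)\}_{i\in[r]}$ together with $0$ of multiplicity $m+n-2r$ (this multiplicity is nonnegative, and is zero precisely when $\bm B$ is square and of full rank, consistent with the stated index ranges).

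Next I would sort. Since $\sigma_1(\bm B)\ge\cdots\ge\sigma_r(\bm B)\ge 0$, the nonincreasing list of eigenvalues of $\bm M$ is $\sigma_1(\bm B),\dots,\sigma_r(\bm B)$, then $m+n-2r$ copies of $0$, then $-\sigma_r(\bm B),\dots,-\sigma_1(\bm B)$; that is, $\lambda_i(\bm M)=\sigma_i(\bm B)$ for $i\in[r]$, $\lambda_i(\bm M)=0$ for $i\in[r+1,m+n-r]$, and $\lambda_i(\bm M)=-\sigma_{m+n+1-i}(\bm B)$ for $i\in[m+n-r+1,m+n]$. Because $\overline{\bm B}=\bm M+t\bm I_{m+n}$ has the same eigenvectors as $\bm M$ with every eigenvalue shifted by $t$, we obtain $\lambda_i(\overline{\bm B})=t+\lambda_i(\bm M)$, which is exactly the formula asserted in part (i). For part (ii), $\overline{\bm B}\succeq 0$ if and only if its smallest eigenvalue is nonnegative, and by part (i) that smallest eigenvalue is $\lambda_{m+n}(\overline{\bm B})=t-\sigma_1(\bm B)$; hence $\overline{\bm B}\succeq 0$ if and only if $t\ge\sigma_1(\bm B)$.

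I do not expect a genuine obstacle here: the argument is a textbook dilation computation. The only points needing care are confirming that the exhibited eigenvectors are mutually orthonormal---so that no eigenvalue is overlooked and the ordering in part (i) is exhaustive---and keeping the index bookkeeping for the zero block of size $m+n-2r$ straight, which is the main place where an off-by-one error could slip in.
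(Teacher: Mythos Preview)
Your argument is correct: the explicit Jordan--Wielandt eigenbasis you construct from the SVD of $\bm B$ does diagonalize $\bm M=\overline{\bm B}-t\bm I_{m+n}$, the count $2r+(m-r)+(n-r)=m+n$ confirms completeness, the sorting is right because $\sigma_1(\bm B)\ge\cdots\ge\sigma_r(\bm B)>0$ (strict positivity since $r$ is the rank), and part (ii) follows immediately.

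The paper's own proof is much terser: for part (i) it simply cites Chapter~3.2 of Ben-Tal and Nemirovski's \emph{Lectures on Modern Convex Optimization}, and for part (ii) it reads off the smallest eigenvalue from part (i). Your write-up is exactly the self-contained computation that such a citation encapsulates, so the two are mathematically identical; yours is just unpacked. The advantage of your version is that it is independent of external references and makes the eigenstructure transparent; the paper's version is brevity.
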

\begin{proof}
	Part (i) directly follows from the result in Chapter 3.2. of \cite{ben2011lectures}. 
	
	Part (ii) follows from the fact that matrix $\overline{\bm B}$ is symmetric and the smallest eigenvalue is $t - \sigma_1(\bm B) \ge 0$. \qed
\end{proof}
By constructing the augmented counterpart of matrix $\bm A$, we can derive another equivalent combinatorial formulation of SSVD \eqref{eq_ssvdcom} as below.
\begin{restatable}{proposition}{prop} \label{prop3}
	The SSVD \eqref{eq_ssvdcom} is equivalent to 
	\begin{align} \label{eq_ssvdcom2}
	\text{\rm (SSVD)} \quad w^*:=	\max_{\begin{subarray}{c}
		S\subseteq [m+n]
		\end{subarray}
	} \left\{||\overline{\bm A}_{S, S}||_{(k)}:   |S\cap [m]|\le s_1, |S\cap [m+1, m+n]|\le s_2  \right\} - k \sigma_{1}(\bm A),
	\end{align}
	where matrix $\overline{\bm A}\in \S_+^{m+n}$ is defined by
	$$\overline{\bm A}\in \S_+^{m+n}:= \begin{pmatrix}
	\bm 0_{m, m} & \bm A\\
	\bm A^{\top} & \bm 0_{n, n}
	\end{pmatrix} + \sigma_{1}(\bm A) \bm I_{m+n}.$$
\end{restatable}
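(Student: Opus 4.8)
The plan is to read everything off from the spectral description in Lemma~\ref{lem:aug}. First I would fix the obvious bijection between subsets $S\subseteq[m+n]$ and pairs $(S_1,S_2)$ with $S_1\subseteq[m]$, $S_2\subseteq[n]$, given by $S_1=S\cap[m]$ and $S_2=\{\,j-m:j\in S\cap[m+1,m+n]\,\}$; under this bijection the two cardinality constraints correspond exactly, and, after reordering the rows and columns of $S$ so that those in $S_1$ come first, the principal submatrix $\overline{\bm A}_{S,S}$ is precisely the augmented matrix of $\bm B:=\bm A_{S_1,S_2}$ in the sense of Lemma~\ref{lem:aug}, with $t=\sigma_1(\bm A)$ and with $|S_1|,|S_2|$ in the roles of $m,n$ there. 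Taking $S=[m+n]$ in Lemma~\ref{lem:aug}(ii) (with $t=\sigma_1(\bm A)$) shows $\overline{\bm A}\in\S_+^{m+n}$ as asserted, so every $\overline{\bm A}_{S,S}$ is positive semidefinite as a principal submatrix; and Lemma~\ref{lem:aug}(i) lists the eigenvalues of $\overline{\bm A}_{S,S}$ as $\sigma_1(\bm A)+\sigma_i(\bm A_{S_1,S_2})$ for $i\le r':=\mathrm{rank}(\bm A_{S_1,S_2})$, then $\sigma_1(\bm A)$ with multiplicity $|S_1|+|S_2|-2r'$, then $\sigma_1(\bm A)-\sigma_i(\bm A_{S_1,S_2})$ --- a nonincreasing list of nonnegative numbers.

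Next I would prove the two inequalities between $w^*$ and the inner maximum in \eqref{eq_ssvdcom2} minus $k\sigma_1(\bm A)$. For the easy direction, fix any feasible $S$ for the right-hand problem: every eigenvalue of $\overline{\bm A}_{S,S}$ beyond the top $r'$ is at most $\sigma_1(\bm A)$ and $\sigma_1(\bm A)\ge0$, so (padding with zeros when $k>|S_1|+|S_2|$ is harmless) the sum of the $k$ largest eigenvalues is at most $k\sigma_1(\bm A)+\sum_{i\le\min\{k,r'\}}\sigma_i(\bm A_{S_1,S_2})=k\sigma_1(\bm A)+\|\bm A_{S_1,S_2}\|_{(k)}\le k\sigma_1(\bm A)+w^*$, using feasibility of $(S_1,S_2)$ for SSVD \eqref{eq_ssvdcom}; maximizing over $S$ yields one inequality. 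For the reverse direction, let $(S_1^*,S_2^*)$ be optimal for SSVD \eqref{eq_ssvdcom}; using that the Ky Fan $k$-norm is monotone under taking submatrices --- which follows from the variational formula in Lemma~\ref{lem:svd} by padding an optimal $\bm X$ with zero rows and columns --- I may enlarge these index sets to full size $|S_1^*|=s_1$, $|S_2^*|=s_2$ (both $\ge k$) so that $\|\bm A_{S_1^*,S_2^*}\|_{(k)}=w^*$ still holds. Then $|S_1^*|+|S_2^*|-2r'\ge\max\{s_1,s_2\}-r'\ge k-r'$, so the ``$\sigma_1(\bm A)$-block'' of the spectrum has at least $(k-r')^+$ entries, whence the $k$ largest eigenvalues of $\overline{\bm A}_{S^*,S^*}$ are exactly the $\min\{k,r'\}$ values $\sigma_1(\bm A)+\sigma_i(\bm A_{S_1^*,S_2^*})$ together with $(k-r')^+$ copies of $\sigma_1(\bm A)$, summing to $k\sigma_1(\bm A)+\|\bm A_{S_1^*,S_2^*}\|_{(k)}=k\sigma_1(\bm A)+w^*$. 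The two inequalities together give \eqref{eq_ssvdcom2}.

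I expect the only delicate point to be the low-rank bookkeeping when $r'<k$: one must adopt the convention $\sigma_i(\cdot)=0$ for $i>\mathrm{rank}$ so that $\|\bm A_{S_1,S_2}\|_{(k)}=\sum_{i\le r'}\sigma_i(\bm A_{S_1,S_2})$ in that regime, and one must confirm that the $\sigma_1(\bm A)$-block is long enough to supply the top $k$ eigenvalues; this is exactly where both the reduction to full-size index sets and the standing hypothesis $k\le\min\{s_1,s_2\}$ are used, and only in the reverse direction. In the easy direction the one-sided estimate ``all but $r'$ eigenvalues are $\le\sigma_1(\bm A)$'' avoids all of this --- for instance, when $|S_1|+|S_2|<k$ one simply has $\|\overline{\bm A}_{S,S}\|_{(k)}=\tr(\overline{\bm A}_{S,S})=(|S_1|+|S_2|)\sigma_1(\bm A)\le k\sigma_1(\bm A)$, since the off-diagonal blocks contribute nothing to the trace. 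Everything else is routine verification.
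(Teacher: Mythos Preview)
Your proposal is correct and follows essentially the same route as the paper: both use the natural bijection $S\leftrightarrow(S_1,S_2)$ and read off the spectrum of $\overline{\bm A}_{S,S}$ from Lemma~\ref{lem:aug}. The paper simply asserts the identity $\|\overline{\bm A}_{S,S}\|_{(k)}=\|\bm A_{S_1,S_2}\|_{(k)}+k\sigma_1(\bm A)$ for every feasible pair without checking that the top $k$ eigenvalues avoid the $\sigma_1(\bm A)-\sigma_i$ block, whereas your two-inequality argument and the enlargement to full-size index sets handle the low-rank case $r'<k$ explicitly; this extra care is warranted, since the paper's pointwise identity can fail for small $S$ even though the optima still agree.
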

\begin{proof}
See Appendix \ref{proof:prop3}. \qed
\end{proof}

According to Part (ii) in Lemma~\ref{lem:aug},  matrix $\overline{\bm A}$ defined in Proposition~\ref{prop3} is positive semidefinite. Let  $\overline{\bm A} = \overline{\bm C}^{\top} \overline{\bm C}$ denote its Cholesky decomposition, where $\overline{\bm C} \in \Re^{\overline{r} \times (m+n)}$ is the corresponding Cholesky matrix, $\overline{r}$ is the rank of matrix $\overline{\bm A}$ and for each $i\in [m+n]$, let $\overline{\bm c}_i\in \Re^{\overline{r}}$ denote its $i$th column.  With the notation introduced, a second MISDP formulation of SSVD \eqref{eq_ssvdcom2} can be derived as below.
\begin{corollary} \label{corsvd}
	The SSVD \eqref{eq_ssvdcom2} is equivalent to
	\begin{align}\label{eq_svdsdp2}
	\text{\rm (SSVD)} \quad w^*:=	\max_{ \begin{subarray}{c} \bm z \in \{0,1\}^{m+n}, \bm X\in \S_+^{\overline{r}},\\
		\bm W_1, \cdots, \bm W_{m+n} \in \S_+^{\overline{r}}
		\end{subarray}
	} \bigg\{ &\sum_{i\in [m+n]} \overline{\bm c}_i^{\top} \bm W_i \overline{\bm c}_i: \bm X \preceq \bm I_{\overline{r}}, \tr(\bm X) \le k, \bm X \succeq \bm W_i,  \tr(\bm W_i) \le z_i,  \notag\\
	& \forall i \in [m+n],  \sum_{i\in [m]} z_i \le s_1,   \sum_{i\in [m+1, m+n]} z_i \le s_2  \bigg\} - k \sigma_1(\bm A).
	\end{align}
\end{corollary}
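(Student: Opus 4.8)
The plan is to follow the derivation of Corollary~\ref{cor2}, now applied to the augmented positive semidefinite matrix $\overline{\bm A}$, with the two separate cardinality budgets on $[m]$ and $[m+1,m+n]$, and carrying along the additive constant supplied by Proposition~\ref{prop3}. First I would invoke Proposition~\ref{prop3} to replace SSVD \eqref{eq_ssvdcom2} by $\max_S\{\|\overline{\bm A}_{S,S}\|_{(k)}:\ |S\cap[m]|\le s_1,\ |S\cap[m+1,m+n]|\le s_2\}-k\sigma_1(\bm A)$, where $\overline{\bm A}\in\S_+^{m+n}$. Then, from the Cholesky factorization $\overline{\bm A}=\overline{\bm C}^\top\overline{\bm C}$ together with the fact that $\bm B^\top\bm B$ and $\bm B\bm B^\top$ share the same nonzero eigenvalues, I would rewrite $\|\overline{\bm A}_{S,S}\|_{(k)}=\|\overline{\bm C}_{[\overline r],S}\overline{\bm C}_{[\overline r],S}^\top\|_{(k)}=\big\|\sum_{i\in S}\overline{\bm c}_i\overline{\bm c}_i^\top\big\|_{(k)}$, exactly the step used in Corollary~\ref{cor2}.

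Encoding the set $S$ by binaries $\bm z\in\{0,1\}^{m+n}$ with $z_i=1\iff i\in S$ (so the budgets become $\sum_{i\in[m]}z_i\le s_1$ and $\sum_{i\in[m+1,m+n]}z_i\le s_2$), and applying Lemma~\ref{lem:eigen} to the positive semidefinite matrix $\sum_{i\in[m+n]}z_i\overline{\bm c}_i\overline{\bm c}_i^\top$, turns the objective into the joint program $\max_{\bm z,\bm X\in\S_+^{\overline r}}\{\sum_{i\in[m+n]}z_i\,\overline{\bm c}_i^\top\bm X\overline{\bm c}_i:\ \bm X\preceq\bm I_{\overline r},\ \tr(\bm X)\le k,\ \text{budgets}\}$. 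It then remains to replace the bilinear products $z_i\bm X$ by matrix variables $\bm W_i\in\S_+^{\overline r}$ subject to $\bm W_i\preceq\bm X$ and $\tr(\bm W_i)\le z_i$, with objective $\sum_{i\in[m+n]}\overline{\bm c}_i^\top\bm W_i\overline{\bm c}_i$, and to subtract $k\sigma_1(\bm A)$; I would establish exactness of this replacement by two inequalities, in the spirit of the linearization in the proof of Theorem~1 of \cite{li2020exact}.

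For ``$\le$'', any feasible $(\bm z,\bm X,\{\bm W_i\})$ with support $S$ obeys $\overline{\bm c}_i^\top\bm W_i\overline{\bm c}_i\le\overline{\bm c}_i^\top\bm X\overline{\bm c}_i$ because $\bm W_i\preceq\bm X$, so its objective is at most $\tr(\bm X\sum_{i\in S}\overline{\bm c}_i\overline{\bm c}_i^\top)\le\|\overline{\bm A}_{S,S}\|_{(k)}$ by Lemma~\ref{lem:eigen}, and $S$ respects both budgets. For ``$\ge$'', given an optimal $S^*$ I would take $\bm X^*$ to be the projection onto the top-$k$ eigenspace of $\sum_{i\in S^*}\overline{\bm c}_i\overline{\bm c}_i^\top$, so that $(\bm X^*)^2=\bm X^*$, and set $\bm W_i^*=(\bm X^*\overline{\bm c}_i)(\bm X^*\overline{\bm c}_i)^\top/\|\bm X^*\overline{\bm c}_i\|^2$ for $i\in S^*$ and $\bm W_i^*=\bm 0$ otherwise; using $(\bm X^*)^2=\bm X^*$ one checks $\tr(\bm W_i^*)=1=z_i^*$, $\bm 0\preceq\bm W_i^*\preceq\bm X^*$, and $\sum_{i\in[m+n]}\overline{\bm c}_i^\top\bm W_i^*\overline{\bm c}_i=\tr(\bm X^*\sum_{i\in S^*}\overline{\bm c}_i\overline{\bm c}_i^\top)=\|\overline{\bm A}_{S^*,S^*}\|_{(k)}$. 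Subtracting $k\sigma_1(\bm A)$ from both sides yields \eqref{eq_svdsdp2}.

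I expect the one genuine subtlety to be the ``$\ge$'' direction: one must argue that the rank-one surrogates $\bm W_i^*$ are simultaneously trace-bounded by $z_i^*$ and dominated by $\bm X^*$, which hinges on $\bm X^*$ being a bona fide projection (not merely some matrix with $\bm X^*\preceq\bm I_{\overline r}$ and $\tr\bm X^*\le k$), plus the minor edge cases $\bm X^*\overline{\bm c}_i=\bm 0$ and $\mathrm{rank}\big(\sum_{i\in S^*}\overline{\bm c}_i\overline{\bm c}_i^\top\big)<k$. The rest — propagating the Cholesky rank $\overline r$, the index partition $[m]\cup[m+1,m+n]$, and the inert additive constant $k\sigma_1(\bm A)$ through the manipulations — is routine bookkeeping that mirrors Corollary~\ref{cor2} verbatim.
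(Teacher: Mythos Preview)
Your proposal is correct and follows exactly the approach the paper intends: the paper's own proof of this corollary reads in full ``The proof is almost identical to that of Corollary~\ref{cor2} and is thus omitted,'' and your outline is precisely that transcription of Corollary~\ref{cor2} to the augmented matrix $\overline{\bm A}$ with the split cardinality budgets and the additive constant from Proposition~\ref{prop3}. Your explicit construction of the rank-one surrogates $\bm W_i^*$ (and the observation that it relies on $\bm X^*$ being an orthogonal projection) goes beyond what the paper spells out, since there the linearization is deferred to Theorem~1 of \cite{li2020exact}, but the argument is sound and matches that reference.
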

\begin{proof}
	The proof is almost identical to that of Corollary~\ref{cor2} and is thus omitted. \qed
\end{proof}

Based on the MISDP \eqref{eq_svdsdp2}, we can also derive an MILP formulation of SSVD, which is the key to designing a branch-and-cut algorithm.
\begin{corollary} \label{corsvdbc}
	The SSVD \eqref{eq_svdsdp2} is equivalent to
	\begin{align}\label{eq_svdbc}
	\text{\rm (SSVD)} \quad w^*:=\max_{\bm z \in \{0,1\}^{m+n}, w}\bigg\{w: \sum_{ i \in [m]}z_i \le s_1, \sum_{ i \in [m+1, m+n]}z_i \le s_2, 
	w\le ||\overline{\bm A}_{S, S} ||_{(k)} +  \notag \\
	\sum_{i\in [m+n]\setminus S} \overline{A}_{ii} z_i,
	\forall S\subseteq [m+n], |[m]\cap S|\le s_1, |[m+1, m+n]\cap S|\le s_1  \bigg\} -k \sigma_1(\bm A).
	\end{align}
\end{corollary}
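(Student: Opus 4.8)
The statement is the SSVD analogue of Theorem~\ref{thm_bc}, pushed through the augmentation of Proposition~\ref{prop3}, so the plan is to reduce it to Theorem~\ref{thm_bc} applied to $\overline{\bm A}$. The first step is to note that, by Corollary~\ref{corsvd}, the MISDP \eqref{eq_svdsdp2} has optimal value $w^*$, and combining this with Proposition~\ref{prop3} its value equals the combinatorial form \eqref{eq_ssvdcom2}, i.e.\
\begin{equation*}
w^*=\max_{S\subseteq[m+n]}\Big\{||\overline{\bm A}_{S,S}||_{(k)}:\ |S\cap[m]|\le s_1,\ |S\cap[m+1,m+n]|\le s_2\Big\}-k\sigma_1(\bm A),
\end{equation*}
where $\overline{\bm A}\in\S_+^{m+n}$ by Lemma~\ref{lem:aug}(ii). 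Hence it suffices to show that the maximum above equals the optimal value of the inner MILP in \eqref{eq_svdbc} (that is, \eqref{eq_svdbc} with the additive term $-k\sigma_1(\bm A)$ removed); subtracting $k\sigma_1(\bm A)$ from both sides then yields \eqref{eq_svdbc}, while the $(m+n)$ binary variables and the two cardinality rows carry over verbatim.

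That combinatorial-to-MILP identity is exactly what Theorem~\ref{thm_bc} establishes, with $\bm A$ replaced by $\overline{\bm A}$ and the single budget $\sum_i z_i\le s$ replaced by the split budget $\sum_{i\in[m]}z_i\le s_1$, $\sum_{i\in[m+1,m+n]}z_i\le s_2$, so I would re-run that proof. The ``$\le$'' direction is immediate: for any feasible $\bm z$ with support $\hat S$, instantiating the cut at $S=\hat S$ forces $w\le||\overline{\bm A}_{\hat S,\hat S}||_{(k)}$, which is at most the combinatorial maximum. For the ``$\ge$'' direction I would take $S^*$ attaining that maximum and set $\bm z$ to the $0/1$ indicator of $S^*$ and $w=||\overline{\bm A}_{S^*,S^*}||_{(k)}$; feasibility of this point for the MILP is precisely the monotone-perturbation inequality
\begin{equation*}
||\overline{\bm A}_{\hat S,\hat S}||_{(k)}\le||\overline{\bm A}_{S,S}||_{(k)}+\sum_{i\in\hat S\setminus S}\overline{A}_{ii}\qquad\text{for all feasible }\hat S,\,S.
\end{equation*}
To prove this for any $\overline{\bm A}\succeq0$, put $T=\hat S\cap S$: we have $||\overline{\bm A}_{T,T}||_{(k)}\le||\overline{\bm A}_{S,S}||_{(k)}$ by Cauchy interlacing for principal submatrices of a positive semidefinite matrix (equivalently, by zero-padding the optimal $\bm X$ from Lemma~\ref{lem:eigen}), and $||\overline{\bm A}_{\hat S,\hat S}||_{(k)}\le||\overline{\bm A}_{T,T}||_{(k)}+\sum_{i\in\hat S\setminus T}\overline{A}_{ii}$ follows by adding the rows/columns of $\hat S\setminus T$ back one at a time; the single-step bound ``appending one row/column $p$ raises $\sum_{j\le k}\lambda_j$ by at most $\overline{A}_{pp}$'' is again pure interlacing, since appending a row/column to a positive semidefinite principal submatrix raises the total eigenvalue sum by exactly $\overline{A}_{pp}$ while not lowering the tail mass $\sum_{j>k}\lambda_j$.

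The only genuine content is this inequality, which presumably already appears inside the proof of Theorem~\ref{thm_bc}; everything else is bookkeeping, and this is the step I expect to be the main obstacle. I do not expect the split cardinality budget to cause trouble, because the inequality itself never refers to the budget and both directions above only instantiate the cuts of \eqref{eq_svdbc} at subsets $S$ that are themselves feasible, so the argument is insensitive to the precise (downward-closed) form of the budget and transfers from the matrix of Theorem~\ref{thm_bc} to $\overline{\bm A}$ unchanged. The cleanest write-up is therefore: quote Corollary~\ref{corsvd} and Proposition~\ref{prop3} for the reduction, apply Theorem~\ref{thm_bc} to $\overline{\bm A}$ with the split budget, and subtract $k\sigma_1(\bm A)$.
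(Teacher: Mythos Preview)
Your reduction is correct and your interlacing argument for the cut-validity inequality
\[
\|\overline{\bm A}_{\hat S,\hat S}\|_{(k)}\le\|\overline{\bm A}_{S,S}\|_{(k)}+\sum_{i\in\hat S\setminus S}\overline{A}_{ii}
\]
goes through: with $T=\hat S\cap S$, Cauchy interlacing gives $\lambda_j(\overline{\bm A}_{T,T})\le\lambda_j(\overline{\bm A}_{S,S})$ for each $j$, and appending a row/column $p$ to a positive semidefinite principal submatrix adds exactly $\overline{A}_{pp}$ to the trace while (again by interlacing and $\overline{\bm A}\succeq 0$) not decreasing the tail mass $\sum_{j>k}\lambda_j$, so the top-$k$ sum increases by at most $\overline{A}_{pp}$. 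Your observation that the split budget is irrelevant to this inequality, and only enters through which cuts are listed, is also correct.

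The paper's route is different in one respect worth noting. The paper proves Corollary~\ref{corsvdbc} by literally rerunning the proof of Theorem~\ref{thm_bc} on the MISDP \eqref{eq_svdsdp2}: it dualizes the inner SDP in the variables $\bm W_i$ and obtains the cut inequalities as values of explicit dual feasible points $\bm Q_i=\overline{\bm c}_i\overline{\bm c}_i^{\top}$, $\mu_i=0$ for $i\in S$ and $\bm Q_i=\bm 0$, $\mu_i=\|\overline{\bm c}_i\|_2^2=\overline{A}_{ii}$ for $i\notin S$. So in the paper the key inequality is a weak-duality byproduct rather than an interlacing fact. Your argument is more elementary and self-contained (no SDP duality, no Cholesky factor $\overline{\bm C}$), while the paper's derivation explains the cuts as Lagrangian certificates, which is the natural viewpoint for the branch-and-cut framework. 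Either way, the final ``quote Corollary~\ref{corsvd}/Proposition~\ref{prop3}, apply the Theorem~\ref{thm_bc} argument to $\overline{\bm A}$ with the split budget, subtract $k\sigma_1(\bm A)$'' summary matches the paper exactly.
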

\begin{proof}
	The proof is almost identical to that of Theorem~\ref{thm_bc} and is thus omitted. \qed
\end{proof}
Similarly, we can apply the branch-and-cut method to solve SSVD \eqref{eq_svdbc}. That is, we start with a  master problem with a small subset of constraints in SSVD \eqref{eq_svdbc} and given an incumbent solution $\hat{\bm z}$ with support $\hat{S}$, we add the most violated constraint (i.e.,
$
w\le ||\overline{\bm A}_{\hat{S}, \hat{S}} ||_{(k)} + 
\sum_{i\in [m+n]\setminus \hat{S}} \overline{A}_{ii}  z_i
$),
into the relaxed master problem and then repeat the solution procedure until the objective value of the relaxed problem is equal to the best lower bound. 

\section{Main Numerical Results}\label{sec:data}
In this section, we implement our proposed exact and approximation algorithms to solve synthetic and real-world instances of SSVD \eqref{eq_ssvdcom} and SPCA \eqref{eq_spca}. Specifically, we consider four real-world datasets from in \cite{dey2018convex}, a drug abuse dataset from the National Survey on Drug Use and Health in 2018, and a solar flare dataset  in \cite{yan2018real}, which are described in Table~\ref{table:data}. 

\begin{table}[ht]
	%\footnotesize
	\setlength{\tabcolsep}{3pt} % Default value: 6pt
	\renewcommand{\arraystretch}{1} % Default value: 1
	\caption{Statistics of Real Datasets }
	\label{table:data}
	\begin{center}
			\vskip -0.15in
		%		\begin{small}
		\begin{sc}
			\begin{tabular}{l |c c c  c r}
				\hline
				Dataset & $m$ & {$n$} & Rank  & Positive semidefinite  \\
				\hline
				Eisen-1 &  79& 79 & 79 & Yes\\
				Eisen-2 &  118& 118 & 78 & Yes \\
				Colon &  500& 500 & 62  & Yes\\
				Reddit & 2000 & 2000 & 949 & Yes\\
				Drug & 16313 & 2365 & 2224 & No\\
				MOUSSE & 232 &292 & 232 & No\\
				\hline
			\end{tabular}
		\end{sc}
		%	\end{small}
	\end{center}
	\vskip -0.25in
\end{table}
All the experiments are conducted in Python 3.6 with calls to Gurobi 9.0 on a personal PC with 2.3 GHz Intel Core i5 processor and 8G of memory. 
Note that  the Ky Fan $k$-norm of matrices is efficiently computed by the PRIMME package  \citep{wu2014primme}.
%\todo{Provide a link anonymously}

\subsection{Illustration of Sparsity Levels on Synthetic and Real Datasets} 
%	\vskip -0.1in
This subsection illustrates how our SSVD model \eqref{eq_ssvdcom} performs against the sparsity levels (i.e., the tuning parameters $(s_1, s_2)$) using synthetic and real-world data. Particularly, the synthetic data matrices $\bm A \in \Re^{m\times n}$  are obtained by sampling $m\times n$ i.i.d. random standard normal variables.

To evaluate SSVD \eqref{eq_ssvdcom} and illustrate the selection of sparsity levels, we compute the ratio of its optimal value over the Ky Fan $k$-norm of the original data matrix $\bm A$, i.e., $w^*/||\bm A||_{(k)}$ for various parameters $s_1,s_2$. Here, the larger the ratio means that the more information that the optimal submatrix  from SSVD contains. Using the MILP formulation \eqref{corsvdbc} of SSVD, we develop a branch-and-cut algorithm to exactly compute the optimal values for the synthetic dataset with $(m,n,k)=(8,10,3)$. Figure~\ref{ssvd1a_p} presents how the ratio $w^*/||\bm A||_{(k)}$ varies as $s_1$ and $s_2$ change.
Note that the running time of all the cases is less than one minute.

\begin{figure}[htbp]
	%		\vskip -0.1in
	\subfloat[The ratio $w^*/||\bm A||_{(3)}$ with $(m,n,k)=(8, 10, 3)$]{
		{\includegraphics[width=0.45\columnwidth]{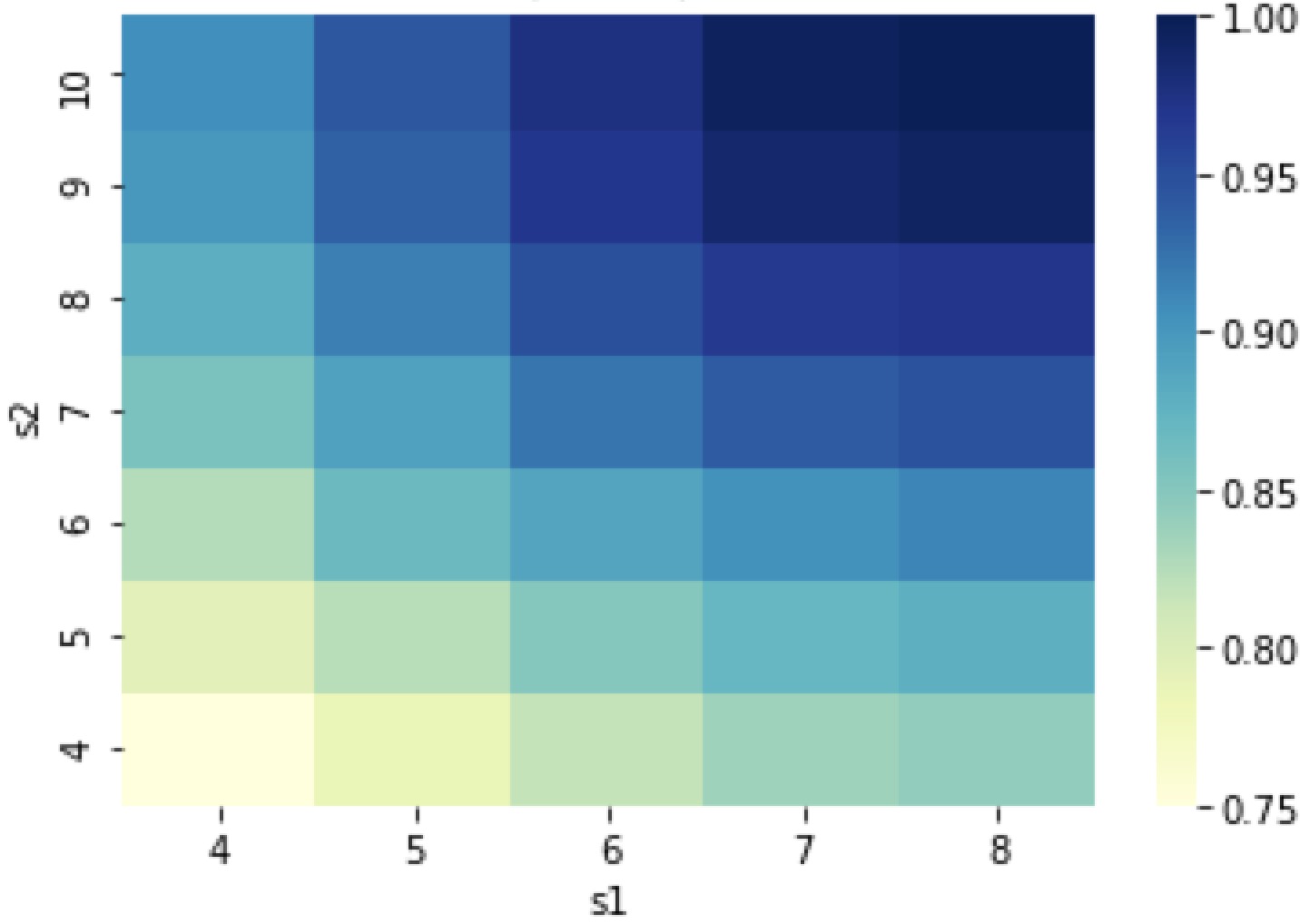}}	\label{ssvd1a_p}}
	~
	\subfloat[The ratio $\hat{w}_1/||\bm A||_{(5)}$ with $(m,n,k)=(200, 250, 5)$]{
		{\includegraphics[width=0.45\columnwidth]{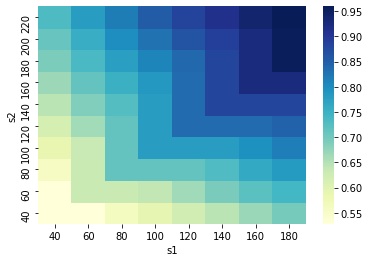}}			\label{ssvd1b_p}}
	
	\subfloat[The ratio $\hat{w}_1/||\bm A||_{(5)}$ with $(m,n,k)=(118, 118, 5)$]{
		{\includegraphics[width=0.45\columnwidth]{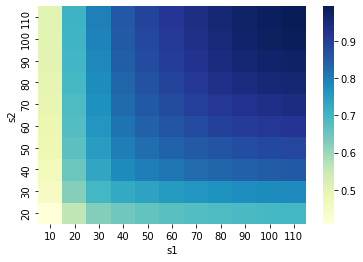}}	\label{ssvd1ra_p}}
	~
	\subfloat[The ratio $\hat{w}_1/||\bm A||_{(7)}$ with $(m,n,k)=(500, 500, 7)$]{
		{\includegraphics[width=0.45\columnwidth]{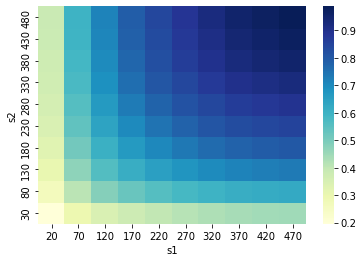}}			\label{ssvd1rb_p}}
	%		\vskip -0.1in
	\caption{Illustration of Sparsity Levels}
	\label{figure}
		\vskip -0.15in
\end{figure}
%\begin{figure}[ht]
%	\begin{center}
%		\centerline{\includegraphics[width=\columnwidth]{sparsity.jpeg}}
%		\caption{The ratio $w^*/||\bm A||_{(3)}$ with $(m,n,s)=(8, 10, 3)$ }
%		\label{ssvd1}
%	\end{center}
%	\vskip -0.3in
%\end{figure}

We also test the larger instances, where we employ the scalable approximation Algorithm~\ref{algo:trun2}, Algorithm~\ref{algo:trun3}, and Algorithm~\ref{algo:svd_greedy}, and return the best objective value as the lower bound of SSVD \eqref{eq_ssvdcom}, denoted by $\hat{w}_1$. 
Accordingly, we compute the approximate ratio by $\hat{w}_1/ ||\bm A||_{(k)}$. Figure~\ref{ssvd1b_p} illustrates the ratio against parameters $s_1$ and $s_2$ for the synthetic instance of $m=200$, $n=250$, and $k=5$.  In addition, we test Eisen-2 and Colon datasets in	Table~\ref{table:data} with $k=5$ and $k=7$, respectively, as displayed in Figure~\ref{ssvd1ra_p} and Figure~\ref{ssvd1rb_p}. 
Note that the running time of approximation algorithms on all the cases is also less than one minute. 
For the two {synthetic and real} datasets in Figure~\ref{figure}, we can {observe}
%	infer 
that the optimal submatrix from SSVD \eqref{eq_ssvdcom} captures about $80\%$ of the information in matrix $\bm A$ when $s_1\approx m/2$ and $s_2\approx n/2$, i.e., a quarter size of the original matrix $\bm A$.  
%\begin{figure}[ht]
%	\begin{center}
%		\centerline{\includegraphics[width=\columnwidth]{large_sparsity.png}}
%		\caption{The ratio $\hat{w}_1/||\bm A||_{(5)}$ with $(m,n,s)=(200, 250, 5)$ }
%	
%	\end{center}
%	\vskip -0.2in
%\end{figure}

\subsection{Evaluations of the Algorithms on Real Datasets}
In this subsection, we evaluate our proposed exact and approximation algorithms on 

We apply the proposed exact and approximation algorithms to solving SPCA \eqref{eq_spca} on the first three positive semidefinite matrices with various parameters $s$ and $k$. 
Table~\ref{table1} presents  the numerical results, where ``Paras" denotes the parameters,  ``B\&C'' denotes the branch-and-cut algorithm,   "Alg" denotes the algorithm, the optimality gap(\%) is computed by
$$ \text{gap}(\%) = 100\times \frac{\text{optimal value} -\text{ lower bound}}{\text{lower bound}},$$
``LS" denotes the benchmark, a modified local search algorithm proposed by \cite{dey2020solving}, which uses a random initial solution, 
and ``Ran'' and ``Vol" denote the  randomized leverage-score algorithm in \cite{mahoney2009cur} and  maximum volume-based algorithm in \cite{mikhalev2018rectangular} for submatrix selection, respectively.
Note that the computational time of the approximation Algorithm~\ref{algo:trun2spca}-Algorithm~\ref{algo:spca_localsearch} are at most one second and thus omitted.
%to the local search algorithm.
It can be seen that the branch-and-cut method is able to solve small- or medium-sized instances to optimality, which allows us to test the solution quality of the approximation algorithms by computing their optimality gaps as shown in Table~\ref{table1}. It is worth mentioning that our selection Algorithm~\ref{algo:trun2spca} and local search Algorithm~\ref{algo:spca_localsearch} are consistently better than the benchmark. 
We also see that greedy Algorithm~\ref{algo:spca_greedy} and  local search Algorithm~\ref{algo:spca_localsearch} returns the optimal solution with zero optimality gap for most of the instances.  Lastly, it is interesting to note that the solution quality of the truncation Algorithm~\ref{algo:trun1spca} is more sensitive to the parameter $s$ than $k$, which complies with its theoretical approximation ratio.
%It is interesting to note that the greedy Algorithm~\ref{algo:spca_greedy} \yongchun{also performs well and cannot be improved by local search Algorithm~\ref{algo:spca_localsearch} for some cases.}

%achieves a better gap as $k$ increases when solving SPCA \eqref{eq_spca}.
%	We apply branch and cut method to solve SPCA \eqref{eq_spca} on three positive semidefinite matrices with various parameters $s$ and $k$. \ref{table1} presents  the optimal value $w^{spca}$ of SPCA \eqref{eq_spca} and computational time in seconds. It can be seen that the branch and cut method is able to solve small- or medium-sized instances to optimality.
\begin{table}[htb]
	%\footnotesize
	\setlength{\tabcolsep}{1.7pt} % Default value: 6pt
	\renewcommand{\arraystretch}{1} % Default value: 1
	\caption{Exact and Approximation Algorithms for Solving SPCA \eqref{eq_spca}}
	\label{table1}
	\begin{center}
				\vskip -0.15in
		%	\begin{small}
		\begin{sc}
			\begin{tabular}{r |r r |r  r |r r r r r r r r r   }
				\hline
				\multirow{2}{*}{Data} & \multicolumn{2}{c|}{Paras} & \multicolumn{2}{c|}{B\&C} & \multicolumn{9}{c}{Gap(\%) of Approximation Algorithms}  \\
				& $s$ & {$k$} &  \multicolumn{1}{c}{$w^{spca}$} & time(s) & Alg \ref{algo:trun1spca} & time(s) & \multicolumn{1}{c}{Alg \ref{algo:trun2spca}} & \multicolumn{1}{c}{Alg \ref{algo:trun3spca}} & \multicolumn{1}{c}{Alg \ref{algo:spca_greedy}} & \multicolumn{1}{c}{Alg \ref{algo:spca_localsearch}} & \multicolumn{1}{c}{LS} & \multicolumn{1}{c}{Ran} & \multicolumn{1}{c}{Vol} \\
				\hline
				Eisen-1 &   5& 2 & 15.9771 & 1 & 0.00 & 1 & 0.00 & 0.00 & 0.00 & 0.00 &0.00 & 35	& 40\\
				$n=79$ & 5& 3 & 16.1501 & 1 &  0.00 & 1 & 0.00 & 0.00 & 0.00 & 0.00 &  0.00 & 18	& 40 \\
				& 5& 4 & 16.2399&  1 &  0.00 & 1 & 0.00 & 0.00 & 0.00 & 0.00 &  0.00 & 15	& 39 \\
				& 10& 2 & 20.1578 &  1 &  0.00 & 1 & 0.00 & 2.50 & 0.00 & 0.00 &  4.58 & 53	& 16 \\
				& 10& 3 & 20.5690 & 1 &   0.00 & 1 & 0.00 & 2.30 & 0.00 & 0.00 &  4.03 & 44	& 14\\
				& 10& 4 & 20.8163 & 1 &   0.00 & 1 & 0.00 & 2.30 & 0.00 & 0.00 &  3.85 & 33	&14\\
				& 15& 2 & 20.7619 & 180 &   0.96 & 1 & 0.33 & 3.65 & 0.00 & 0.00 &  6.80  & 29	&2\\
				& 15& 3 & 22.2571 & 1 & 3.25 & 1 & 0.00 & 5.92 & 0.00 & 0.00 &  10.84 & 25	&5\\
				& 15 & 4 & 22.7143 & 1 & 2.55 & 1 & 0.07 & 5.14 & 0.00 & 0.00 &  10.19 & 33	&5\\
				\hline
				Eisen-2 &   5& 2 & 8.1438 & 6 & 0.00 & 1 & 0.00 &19.75 &14.45 &0.93  &19.75 & 30	&35 \\
				$n=118$ & 5& 3 & 8.4338 & 10 & 2.08 & 1 &2.06 &22.70 &0.00 &0.00 &22.70  & 28&	32 \\
				& 5& 4 & 8.5467 & 10 & 2.87 & 1&2.87 &23.57 &0.00 &0.00 &23.57 & 8	&31 \\
				& 10& 2 & 13.8929 & 510 & 15.31 & 1 &0.83 &15.31 &7.71 &1.45 &6.30 & 7	&29\\
				& 10& 3 & 14.4792 &2095 & 18.21 & 1 & 0.15 & 17.83 & 0.00 & 0.00 &  8.51 & 65	&18\\
				& 10& 4 & 14.7313 & 3325 & 18.56 & 1 & 0.00 & 17.99 & 0.00 & 0.00 &  9.12 & 51	&17\\
				\hline
				Colon &   5& 2 & 1709.9575 & 10 &0.00 & 302 &0.00 &0.00 &0.00 &0.00 &7.55  & 169 & 83\\
				$n=500$ 	 & 5& 3 & 1718.1590 &25 &0.00 & 297&0.00 &0.00 &0.00 &0.00 &5.85 & 56 & 77\\
				& 5& 4 & 1720.1219 &20 &0.00 & 303&0.00 &0.00 &0.00 &0.00 &5.60 & 115 & 72 \\
				& 10& 2 & 2794.1404 &3951 & 0.38 &315 &0.00 &0.38 &0.00 &0.00 &0.21 & 98 & 114 \\
				\hline
			\end{tabular}
		\end{sc}
		%	\end{small}
	\end{center}
				\vskip -0.25in
\end{table}

%The comparison of gaps of our approximation algorithms is presented in \ref{ourcomp}. Furthermore, we also compare our local search Algorithm~\ref{algo:svd_localsearch} with that  in \ref{deycomp}. 

%\begin{figure}[htbp]
%	\vskip -0.01in
%	\subfloat[Comparison of our approximation algorithms]{
%		{\includegraphics[width=0.45\columnwidth]{}	\label{ourcomp}}}
%	~
%	\subfloat[Comparison of local search algorithms]{
%		{\includegraphics[width=0.45\columnwidth]{}	\label{deycomp}}}
%	\vskip -0.05in
%	\caption{Comparison of Approximation Algorithms}
%\end{figure}
%\begin{figure}[ht]
%	\begin{center}
%		\centerline{\includegraphics[width=\columnwidth]{ourcomp.png}}
%		\caption{Comparison of our approximation algorithms}
%		\label{ourcomp}
%	\end{center}
%	\vskip -0.2in
%\end{figure}
%
%\begin{figure}[ht]
%	\begin{center}
%		\centerline{\includegraphics[width=\columnwidth]{Deycomp.png}}
%		\caption{Comparison of local search algorithms}
%		\label{deycomp}
%	\end{center}
%	\vskip -0.2in
%\end{figure}

We further employ the proposed approximation algorithms to solve SSVD \eqref{ssvd} on all data matrices by selecting different parameters $s_1$ and $s_2$. 
The computational results can be found in Table~\ref{table2}, where ``LB" denotes the lower bound returned by an approximation algorithm. Note that for these SSVD instances, the algorithm developed by \cite{dey2020solving} cannot solve them since the selected submatrices might not be symmetric. In Table~\ref{table1}, we see that the other two existing algorithms perform poorly even when solving the small instances. We also notice that the branch-and-cut algorithm has difficulty in finding a feasible solution within one hour
Thus, neither the benchmark algorithm nor the branch-and-cut algorithm is reported.
It is worthy of mentioning that to more efficiently solve high-dimensional Drug dataset, we restrict  the selection Algorithm~\ref{algo:trun1} and local search Algorithm~\ref{algo:svd_localsearch} to a proper but larger-sized submatrix than $s_1\times s_2$ by first greedily selecting a larger-sized (e.g., $2s_1\times 2s_2$) submatrix and then applying these two algorithms to the submatrix.
To better illustrate the comparison of the approximation algorithms for SSVD \eqref{ssvd}, we normalize the lower bounds by the ones found by the local search Algorithm~\ref{algo:svd_localsearch} and display the results in Figure~\ref{fig:ssvd1}.
%\todo{change the yaxis title to be Normalized Ratios}. 
A larger ratio means a larger lower bound and a better solution.
%We  see that all the approximation algorithms are quite effective for solving large-scale instances. 
We see that the local search Algorithm~\ref{algo:svd_localsearch} significantly improves the solution of greedy Algorithm~\ref{algo:svd_greedy} and consistently performs the best among all algorithms. 
%Note that the selection Algorithm~\ref{algo:trun1} cannot solve Case 4 and Case 5 within one hour.

\begin{table}[htb]
			\vskip -0.15in	
	\fontsize{10}{10}\selectfont 
	\caption{Approximation Algorithms for Solving SSVD \eqref{eq_ssvdcom}}
		\vskip -0.15in
	\label{table2}%\footnotesize
	\setlength{\tabcolsep}{1pt} % Default value: 6pt
	\renewcommand{\arraystretch}{1} % Default value: 1
	\begin{center}
		% \begin{small}
		\begin{sc}
			\setlength{\tabcolsep}{1.2pt}\renewcommand{\arraystretch}{1.2}
			
			\begin{tabular}{c |r  r r |  r  r | r  r | r r | r r |r r}
				\hline
				\multirow{2}{*}{Data}& \multicolumn{3}{c|}{Paras} & \multicolumn{2}{c|}{Alg \ref{algo:trun1}} & \multicolumn{2}{c|}{Alg \ref{algo:trun2}} & \multicolumn{2}{c|}{Alg \ref{algo:trun3}} & \multicolumn{2}{c|}{Alg \ref{algo:svd_greedy}} & \multicolumn{2}{c}{Alg \ref{algo:svd_localsearch}}\\
				&\multicolumn{1}{c}{ $s_1$}    &\multicolumn{1}{c}{ $s_2$} & \multicolumn{1}{c|}{ $k$} &\multicolumn{1}{c}{ LB} & time(s) & \multicolumn{1}{c}{ LB} & time(s) & \multicolumn{1}{c}{ LB}  & time(s) &\multicolumn{1}{c}{ LB}  & time(s) & \multicolumn{1}{c}{ LB}  & time(s)  \\
				\hline
				Eisen-1&   5&  10 & 3 &  17.89& 1& 17.89& 1 & 17.89& 1 & 16.15& 1 & 18.08& 1 \\
				$m=79$ & 10&  15 & 3 & 20.82 & 1& 20.97 & 1 & 20.32& 1 &20.57 & 1 & 21.11& 2  \\
				$n=79$  &15 & 20 & 3&  22.15& 1& 21.78 & 1 & 21.12 & 1 & 22.26 &  1 & 22.53 & 1\\
				\hline
				Eisen-2 &   15&  20 & 4 &  21.38& 1& 21.38 & 1 & 19.60 & 1 & 19.39& 1& 21.41 & 4  \\
				$m=118$ & 20&  25 & 4 & 25.24 & 1& 25.24 & 1 & 25.24 & 1 & 23.89 & 2 & 25.32 & 3  \\
				$n=118$  & 25 & 30 & 4&  28.02& 1& 28.09 & 1& 28.03 & 1 & 26.96&  2 & 28.83 & 8\\
				\hline	
				Colon &   30&  25 & 5 &  5943.40& 226& 5957.77 & 1 & 5943.40& 1 & 5621.61 & 13 & 5987.91 & 57  \\
				$m=500$ & 35&  30 & 5 & 6712.53 & 209& 6714.23 & 1 & 6712.53 & 1 & 6391.81 & 16 & 6724.51 & 23  \\
				$n=500$  & 40 & 35 & 5&  7387.09& 280& 7387.08 & 1 & 7387.08 & 1 & 7087.79 &  19 & 7405.59 & 27\\
				\hline
				Reddit &   40&  35 & 6 &  4260.76& 371& 4230.60 &6 & 3761.87 & 1 & 4273.96 & 92 & 4364.84 & 403  \\
				$m=2000$ &45&  40& 6 & 4634.98 & 340& 4409.92 & 6 & 3882.21 & 1 & 4461.45 & 104 & 4654.04& 1052  \\
				$n=2000$  & 50 & 45 & 6&  4786.25& 363& 4601.39 & 8 & 3963.54 & 1 & 4734.31&  132 & 4809.57 & 922\\
				% 	& 40 & 60 & 5&  & & 4312.53 & 11 & 3895.94 & 1 & 4306.35 &  66 & & \\
				% 	& 70 & 70 & 7&  8385.68  & 57.40& 342& 56.21 & 13 & 75.96 & 1 & 54.38 &  102  & 54.38 &  128\\
				% 	& 80 & 65 & 7&  8385.68  & 57.40& 342& 56.21 & 13 & 75.96 & 1 & 54.38 &  102  & 54.38 &  128\\
				%				& 100 & 100 & 10&  6391.14& 644& 6419.05 & 28 & 6037.85 &  1 & 6457.82 & 239 & 6457.82 & 273\\
				\hline
				Drug &   30& 35 & 7 & 827.21 & 554 & 821.38& 203 &766.41& 1 &982.74&163 &1012.91  &549\\
				$m=16313$  & 35& 30 & 7 &  873.98 & 722 & 862.99 & 173 &808.94 & 1 &982.74&236 &1067.52&724\\
				$n=2365$ & 50& 45 & 7 & 1070.86 & 1442 & 1064.35& 253 &997.17& 1 &1247.86&412 &1327.95  &1332\\
				& 45& 50 & 7 &  1034.86 & 1205 & 1036.71 & 271 &961.55& 1 &1247.86&331 &1286.00 &1121\\
				\hline
			\end{tabular}
		\end{sc}
		% \end{small}
	\end{center}
	\vskip -0.2in
\end{table}

\begin{figure}[ht]
	\vskip -0.1in
	\centering
	\includegraphics[width=0.9\columnwidth]{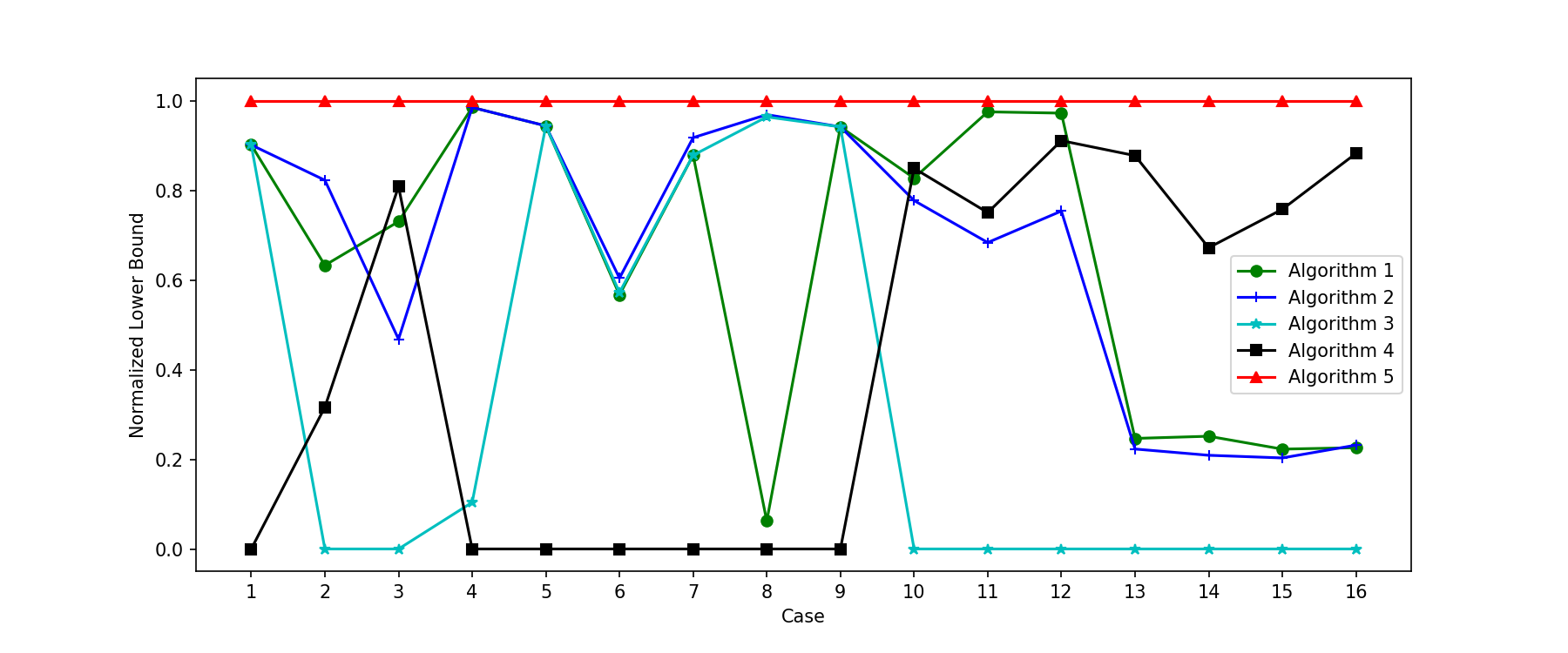}		
			\vskip -0.1in
	\caption{Normalized Lower Bounds in Table~\ref{table2}}
	\label{fig:ssvd1}
	\vskip -0.1in
\end{figure}

\subsection{Anomaly Detection by SSVD: Solar Flare Detection}

Recently, the low-rank-based methods \citep{qu2018hyperspectral,  cheng2019graph} or low-rank-embedded deep learning methods \citep{jiang2021lren,song2019hyperspectral} have been widely  applied to anomaly detection, especially in hyperspectral imagery \citep{zhang2021spectral}, where the basic idea is that the normal data can be recast in a low-dimensional subspace and  admit low-rank property after removing the anomalies. Since our SSVD \eqref{ssvd} achieves simultaneously sparse and low-rank approximation, it can be directly applied to the solar flare detection. 
Particularly, the solar flare phenomenon, a  rapid and intense variation of brightness in the solar atmosphere lasting from minutes to hours, may potentially pose risks to electronic  power grids and radio communications and thus needs efficiently detecting. 

The  solar dataset-MOUSSE in Table~\ref{table:data} used for the anomaly detection of SSVD \eqref{ssvd} consists of  300 solar images in a video format collected from a satellite, where  each image is of size
$232\times 292$ pixels \citep{yan2018real}. Therefore,  there are 300 temporal  data matrices of equal size, $232\times 292$. Based on the previous numerical performance, we choose to adopt the truncation Algorithm~\ref{algo:trun2} of SSVD to find a sparse and low-rank approximation for each matrix by setting $s_1=100$, $s_2=100$, and $k=10$, where the output values of  truncation Algorithm~\ref{algo:trun2} representing the  approximation results of matrices at all time points are displayed in Figure~\ref{ssvd_solar}. For comparison purposes, the sum of all the singular values of each data matrix  is shown in Figure~\ref{solar} without any  sparse or low-rank approximation. Both methods can be implemented within several minutes. In the temporal solar dataset, two  occurrences of solar flares at intervals $187\sim 202$ and $216\sim268$ have been recognized by  \cite{nabhan2021correlation}. It is seen  that the sparse and low-rank approximation by SSVD \eqref{ssvd} successfully identifies the potential solar flares at red points $177\sim 201$ and $217\sim267$ in Figure~\ref{ssvd_solar}, which is slightly different but consistent with the literature. On the other hand, we can only identify the solar flare occurrence at $217\sim236$ if we simply use the original data matrix for detection in Figure~\ref{solar}.

\begin{figure}[htbp]
		\vskip -0.15in
	\subfloat[{Objective value of SSVD with $(s_1,s_2,k)=(100, 100, 10)$ at each time point}]{
		{\includegraphics[width=0.45\columnwidth]{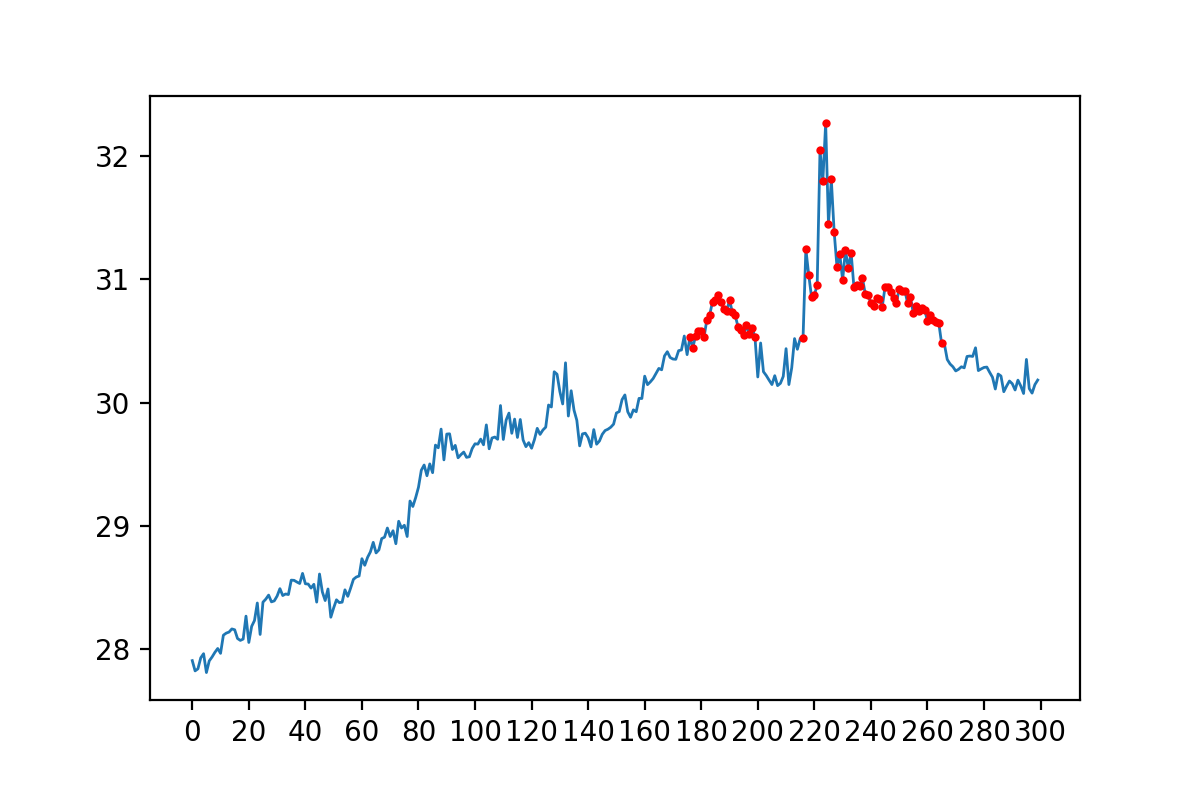}}	\label{ssvd_solar}}
	~
	\subfloat[Nuclear norm of the original  data matrix at each time point]{
		{\includegraphics[width=0.45\columnwidth]{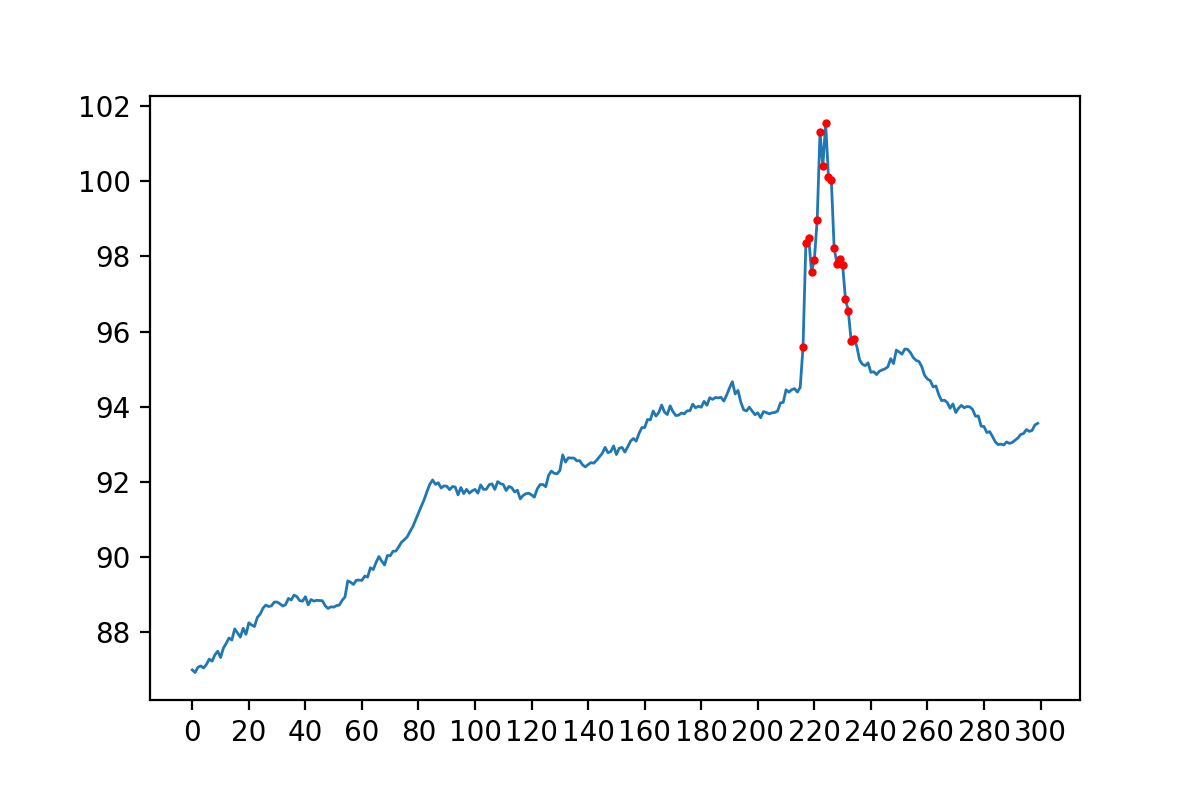}}			\label{solar}}
		\vskip -0.1in
	\caption{Sparse and Low-rank Approximation of SSVD for Solar Flare Detection}
	\label{fig:solar}
		\vskip -0.15in
\end{figure}

As seen in Figure~\ref{fig:solar}, the proposed SSVD model  is not only more effective but also improves interpretability and offers insights of the anomaly detection results. Figure~\ref{fig:solarimage} shows comparisons of original images and the selected sub-image by the SSVD at two time points, 192 and 222, respectively.
We see that at  both time points, the selected sub-images capture the abnormal brightness and remove a large portion of the irrelevant dark solar atmosphere. These selected sub-images can be useful for domain experts to pinpoint the important sub-image and study the flares more closely. This demonstrates that the SSVD can succeed in extracting valuable information and selecting crucial features. 

\begin{figure}[ht]
		\vskip -0.2in
	\subfloat[Original  image  at 192 ]{
		{\includegraphics[width=0.45\columnwidth]{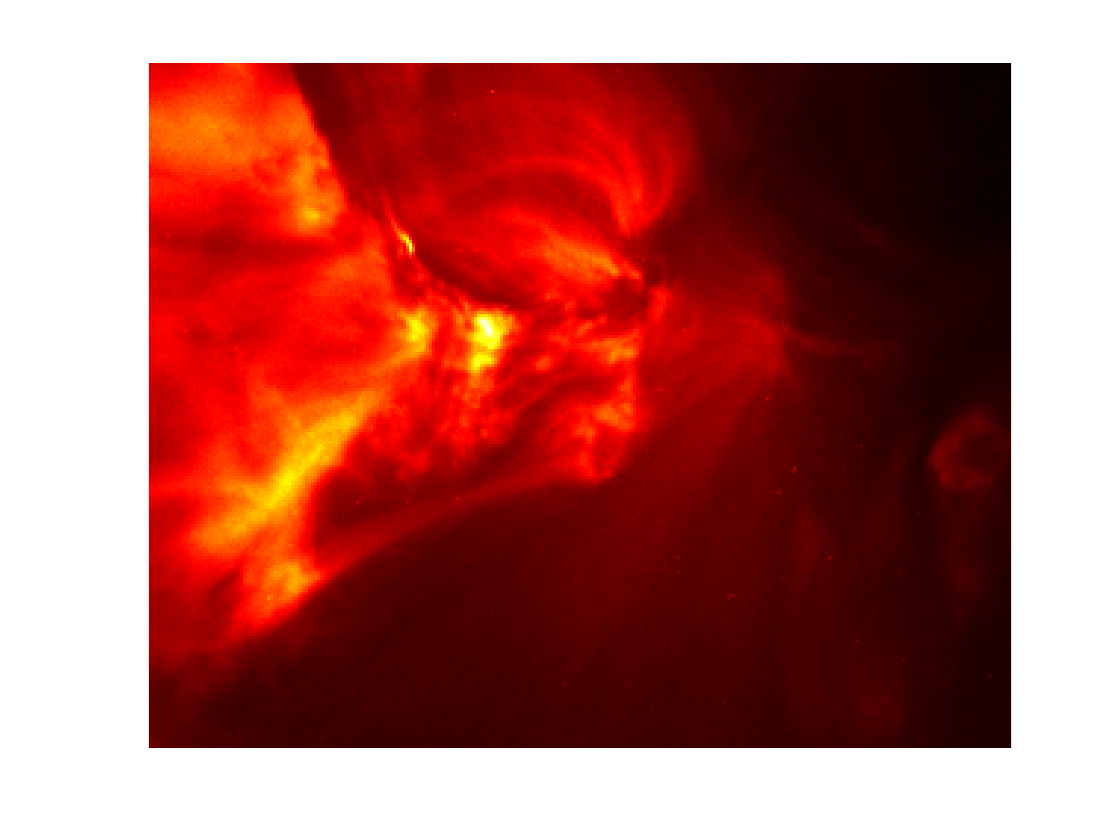}} 			\label{flare11}}
	~
	\subfloat[ Sub-image extracted by SSVD at 192]{
		{\includegraphics[width=0.45\columnwidth]{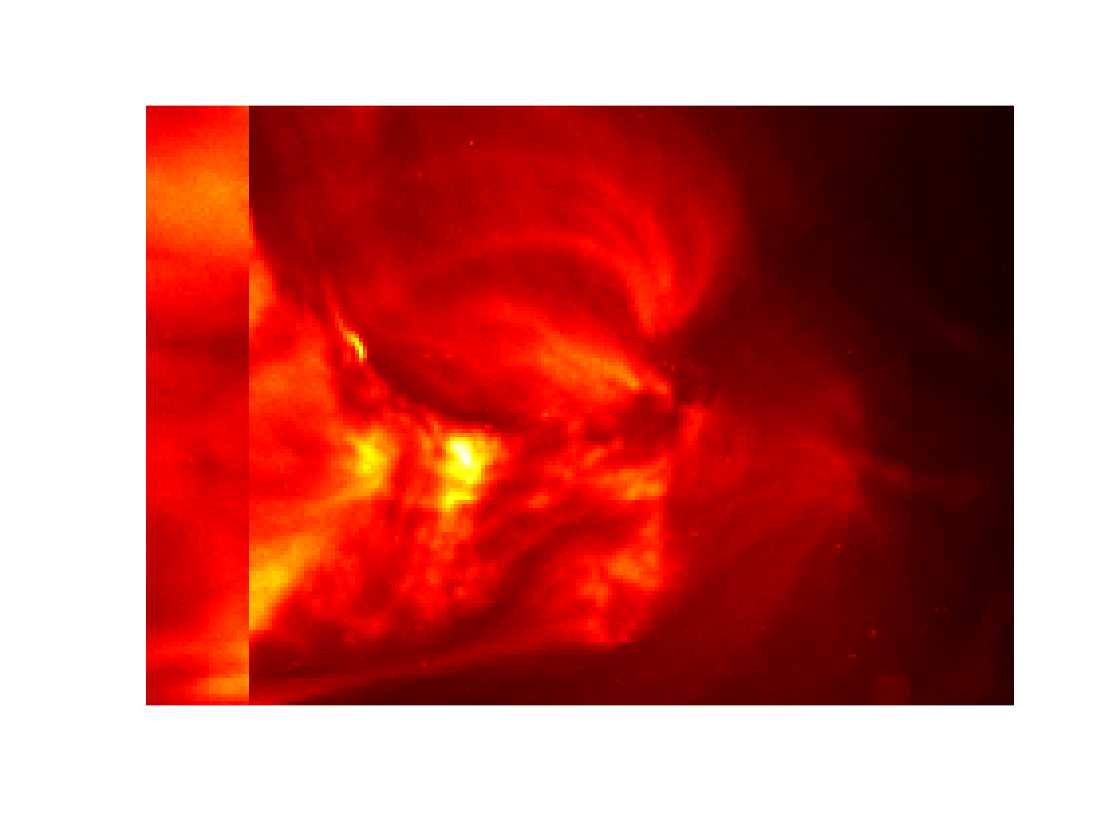}}			\label{flare12}}
	
	\subfloat[Original  image  at 222]{
		{\includegraphics[width=0.45\columnwidth]{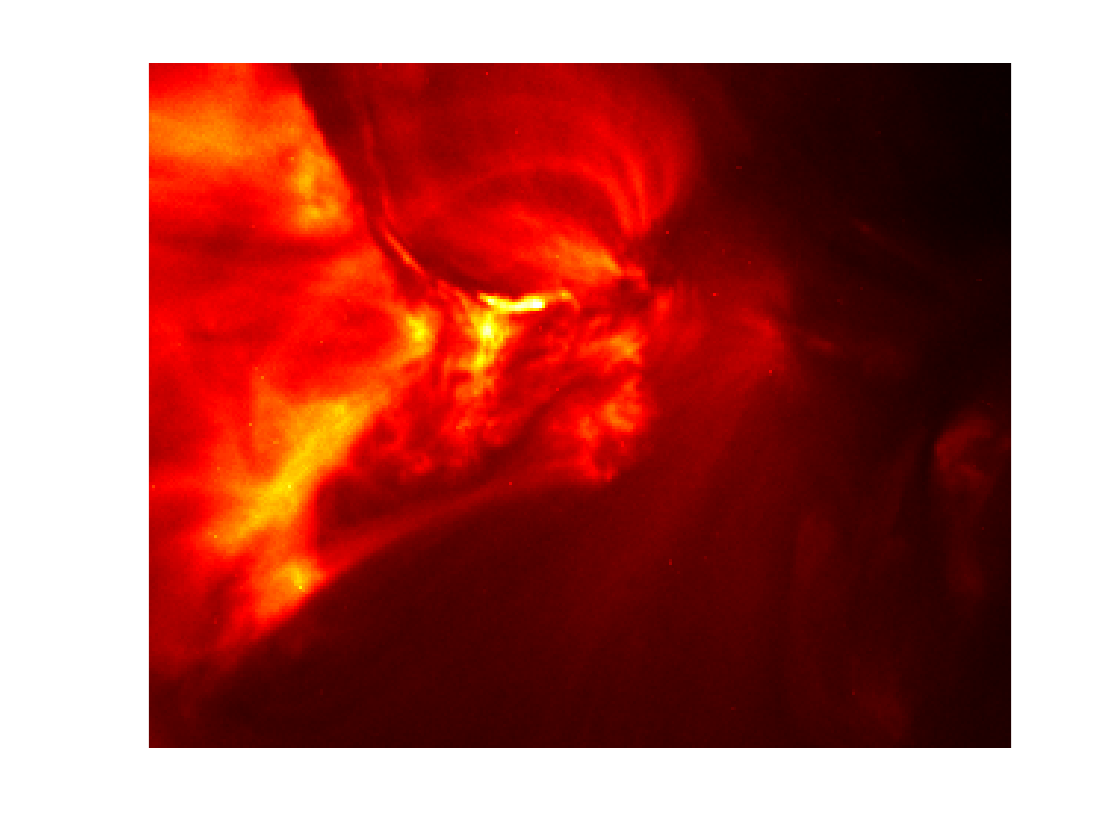}}	\label{flare21}}
	~
	\subfloat[Sub-image extracted by SSVD at 222]{
		{\includegraphics[width=0.45\columnwidth]{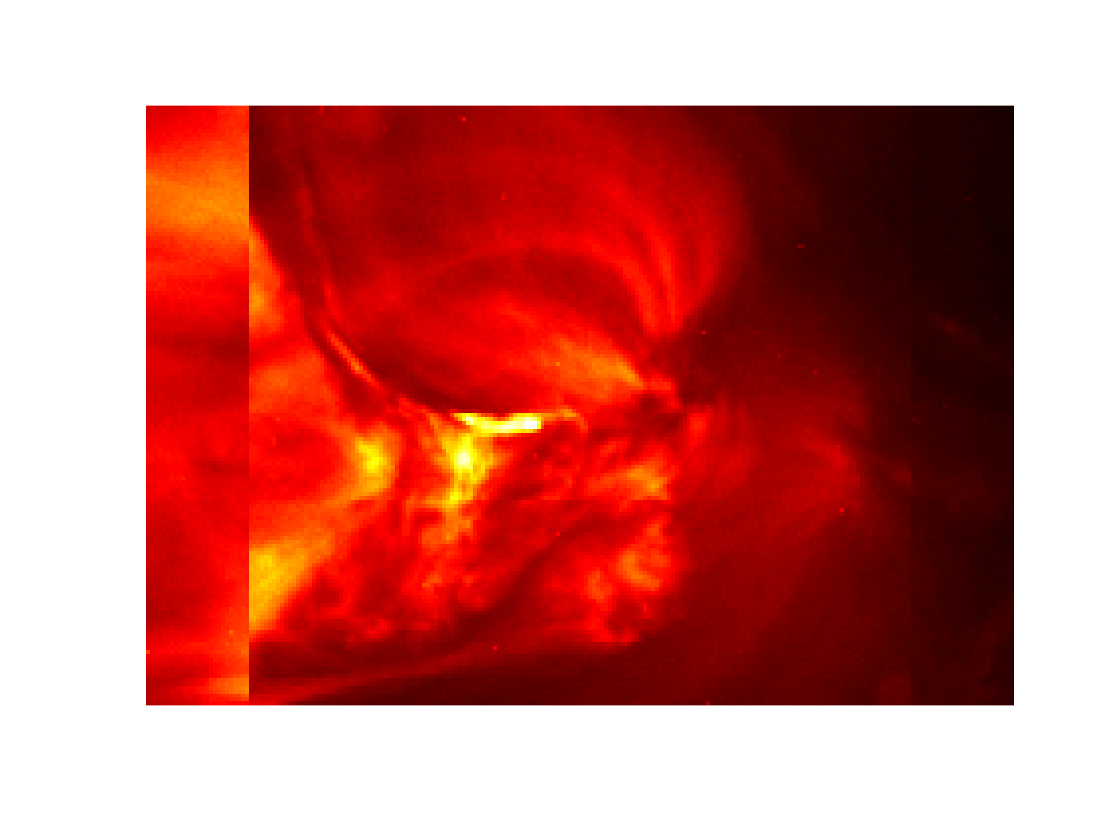}}			\label{flare22}}

	\caption{Approximation of Solar Flare Images  by SSVD}
	\label{fig:solarimage}
		\vskip -0.3in
\end{figure}

	\section{Conclusions}
\label{sec:conclusions}
For the best submatrix selection within a given low rank, we present a novel SSVD model, which ensures the selected submatrix to be simultaneously sparse and low-rank. Future interesting directions on {SSVD} include {studying how to finalize the effective sparsity levels and} developing efficient convex relaxation algorithms, robust SSVD under noisy data, fair SSVD among multiple sensitive groups, sparse and low-rank tensor decomposition, and so on.%In addition, the submatrix selection for SSVD goes beyond symmetry,  so our analysis of the exact or approximation algorithms may apply to other classical submatrix selection problems, for example, the subcolumn selection for maximum volume or  sparse ridge regression,  and the principal submatrix selection for SPCA or maximum entropy sampling.
\begin{acknowledgements}
	This research has been supported in part by the National Science Foundation grants 2046426 and 2153607. The authors would like to thank Dr. Hao Yan from Arizona State University for sharing the solar flare dataset.
\end{acknowledgements}

\bibliography{references.bib}

\newpage
\titleformat{\section}{\large\bfseries}{\appendixname~\thesection .}{0.5em}{}
%\appendix\normalsize
%123
 
\begin{appendices}
%\section{Extensions}  
	\section{Proofs}\label{proofs}
%	\subsection{Proof of Proposition~\ref{prop:dopt_ld}} \label{proof:prop:dopt_ld}
%	\themdoptld*

%\subsection{Proof of Proposition~\ref{prop:dopt_lips}} \label{proof:prop:dopt_lips}
%\propdoptlip*

%		\subsection{Proof of Theorem~\ref{thms1}} \label{proof:thems1}
%		\thmsone*

\subsection{Proof of Proposition~\ref{props1}}	\label{proof:props1}
\propsone*
\begin{proof}
	\begin{subequations}
		We split the proof into two steps: NP-hardness and MILP reformulation.
		\begin{enumerate}[(i)]
			\item 	\textbf{NP-hardness.}
			
			We show that problem \eqref{eq_alter} can be reduced to the well-known NP-hard maximum clique problem.
			
			Given an undirected graph $G=(V, E)$,  let $n=|V|$ denote the number of vertices. Then we construct matrix $\bm A \in \Re^{n\times n}$ as below
			\begin{align*}
			A_{ij} = \begin{cases}
			1, & \text{if } (i,j)\in E\\
			n^2, & \text{if } i=j\\
			0, & \text{otherwise}
			\end{cases}, \forall i\in [n],j\in [n].
			\end{align*}
			
			We see that matrix $\bm A$ is symmetric and diagonally dominant and is thus positive semidefinite.
			
			Suppose $s_1=s_2 = s \in [n]$. For any solution $(S_1^*, S_2^*)$ to problem \eqref{eq_alter},  if $S_1^*=S_2^*$, then $s$ diagonal elements are chosen and the objective value is at least $sn^2$. On the other hand, if $S_1^*\neq S_2^*$, at most $s-1$ diagonal elements are chosen and the objective value is at most $s(s-1)+(s-1)n^2 < sn^2$. Therefore, letting $(S_1^*, S_2^*)$ denote the optimal solution to  problem \eqref{eq_alter}, at optimality, we must have $S_1^* = S_2^*=S^*$. That is,  problem \eqref{eq_alter} reduces to
			\begin{align} \label{eq_alter_cl}
			\ \max_{
				\begin{subarray}{c}
				S\subseteq[n]
				\end{subarray}
			} \left\{||\bm A_{S, S}||_F: |S| \le s\right\}.
			\end{align}
			
			Next, we will show that finding the largest $s$ such that the optimal value of problem \eqref{eq_alter_cl} is equal to $s(s-1)+sn^2$ is equivalent to finding the maximum clique in the graph $G=(V, E)$. Indeed, for each $s\in [n]$, if the optimal value of problem \eqref{eq_alter_cl} is equal to $s(s-1)+sn^2$, then we find a size-$s$ clique with vertices indexed by $S^*$. The largest such $s$ provides us a lower bound of the maximum clique of the graph $G=(V, E)$. On the other hand, given a size-$s$ clique with its vertices indexed by $S^*$, clearly $S^*$ is an optimal solution to problem \eqref{eq_alter_cl} with the optimal value $s(s-1)+sn^2$. Hence, the size of maximum clique in the graph $G=(V, E)$ is smaller than or equal to the largest $s$ such that the optimal value of problem \eqref{eq_alter_cl} is equal to $s(s-1)+sn^2$.
			
			\item \textbf{MILP reformulation.}
			
			First, for any feasible subsets $S_1$ and $S_2$ to problem \eqref{eq_alter}, the objective value is equal to
			\begin{align*}
			||\bm A_{S_1, S_2}||_F = \sqrt{\sum_{i\in S_1}\sum_{j\in S_2} A_{ij}^2},
			\end{align*}
			where the equality is because of the definition of Frobenius norm. Thus,  by taking square of the objective function, problem \eqref{eq_alter} can be equivalently reformulated as
			\begin{align} \label{eq_alter1}
			\ \max_{
				\begin{subarray}{c}
				S_1\subseteq[m],  S_2\subseteq [n]
				\end{subarray}
			} \bigg\{\sum_{i\in S_1}\sum_{j\in S_2} A_{ij}^2: |S_1| \le s_1,|S_2| \le s_2\bigg\}.
			\end{align}
			
			Next, let us introduce binary variables $\bm x \in \Re^m$ and $\bm y \in \Re^n$ such that $x_i$=1 if $i\in S_1$ and zero, otherwise for all $i\in [m]$ and   $y_j$=1 if $j\in S_2$ and zero, otherwise for all $j\in [n]$.  Problem \eqref{eq_alter1} can be recast as the following binary bilinear program
			\begin{align*}
			\max_{
				\begin{subarray}{c}
				\bm x \in \{0,1\}^m, \bm y \in \{0,1\}^n
				\end{subarray} } \bigg\{ \sum_{i\in [m]} \sum_{j\in [n]} A_{ij}^2 x_{i} y_j: \sum_{i\in [m]} x_i \le s_1,  \sum_{j\in [n]} y_j \le s_2 \bigg\}.
			\end{align*}
			Above, introducing binary variables $	\bm z\in \{0,1\}^{m \times n}$ and using McCormick inequalities \cite{mccormick1976computability} to linearize the bilinear terms in the objective function, we arrive at the desirable MILP formulation.
		\end{enumerate}
	\end{subequations}
	\qed
\end{proof}

\subsection{Proof of Theorem~\ref{thm_bc}} \label{proof:thm_bc}
\thmbc*
\begin{proof}	
	\begin{subequations}
		By separating the binary variables $\bm z$, we can rewrite the SPCA \eqref{eq_spcasdp2} as
		\begin{align} \label{eq_spca1}
		\max_{\begin{subarray}{c}
			\bm z \in \{0,1\}^n, \\
			\sum_{i\in [n]} z_i \le s
			\end{subarray} } f(\bm{z}):=\max_{ \begin{subarray}{c}\bm X  \in \S_+^r, \bm X \preceq \bm I_r, \\
			\bm W_1, \cdots, \bm W_n \in \S_+^r\end{subarray}} \bigg\{\sum_{i\in [n]}\bm c_i^{\top} \bm W_i \bm c_i:  \tr(\bm X) \le k, \bm X \succeq \bm W_i, \tr(\bm W_i) \le z_i, \forall i\in [n]     \bigg\}.
		\end{align}
		
		For the inner maximization problem above, we introduce Lagrangian multipliers $\{\bm Q_i \in \S_+^r \}_{i\in [n]}$ 
		and $\bm \mu \in \Re_+^n$ for the constraints with respect to matrix variables $\{ \bm W_i\}_{i\in [n]}$ and the resulting dual problem is 
		\begin{align*}
		f(\bm{z})\leq \min_{\begin{subarray}{c}
			\bm \mu \in \Re_+^n,\\
			\bm Q_1, \cdots, \bm Q_n \in \S_+^r
			\end{subarray}} 
		\max_{ \begin{subarray}{c}
			\bm W_1, \cdots, \bm W_n \in \S_+^r,\\
			\bm X \in \S_+^r, \bm X \preceq \bm I_r, \tr(\bm X) \le k
			\end{subarray}} \bigg\{\sum_{i\in [n]}\bm c_i^{\top} \bm W_i \bm c_i  + \sum_{i\in [n]}\tr(\bm Q_i(\bm X - \bm W_i)) + \sum_{ i \in [n]} \mu_i(z_i - \tr(\bm W_i))   \bigg\},
		\end{align*}
		where the inequality is by weak duality.
		Maximizing the inner problem above over variables $\{ \bm W_i\}_{i\in [n]}$ yields 
		\begin{align*}
		f(\bm{z})\leq 	\min_{\begin{subarray}{c}
			\bm \mu \in \Re_+^n,\\
			\bm Q_1, \cdots, \bm Q_n \in \S_+^r
			\end{subarray}} 
		\max_{ \begin{subarray}{c}
			\bm X \in \S_+^r, \bm X \preceq \bm I_r, \tr(\bm X) \le k
			\end{subarray}} \bigg\{ \sum_{ i \in [n]}\tr(\bm Q_i\bm X) + \sum_{ i \in [n]} \mu_i z_i : \bm c_i \bm c_i^{\top} \preceq \bm Q_i + \mu_i \bm I_r, \forall i\in [n]   \bigg\}.
		\end{align*}
		Next, from Lemma~\ref{lem:eigen}, maximizing the inner problem above over the matrix variable $\bm X$ yields 
		\begin{align} \label{eq_dual}
		f(\bm{z})\leq \min_{\begin{subarray}{c}
			\bm \mu \in \Re_+^n,
			\bm Q_1, \cdots, \bm Q_n \in \S_+^r
			\end{subarray}} 
		\bigg\{ \bigg|\bigg|\sum_{ i \in [n]} \bm Q_i\bigg|\bigg|_{(k)} + \sum_{ i \in [n]} \mu_iz_i : \bm c_i \bm c_i^{\top} \preceq \bm Q_i + \mu_i \bm I_r, \forall i\in [n]   \bigg\}.
		\end{align}
		According to the proof of Corollary~\ref{cor2}, for any feasible solution $\bm z$ of SPCA \eqref{eq_spca1} with its support $\hat{S}$, the inner maximization problem in \eqref{eq_spca1} admits the optimal value $f(\bm{z})=||\sum_{i\in \hat{S}}\bm c_i \bm c_i^{\top}||_{(k)} $, implying that the optimal value of its corresponding dual problem \eqref{eq_dual} must be lower bounded by $||\sum_{i\in \hat{S}}\bm c_i \bm c_i^{\top}||_{(k)}$ based on the weak duality.
		
		In particular, for a subset $S\subseteq [n]$ of size at most $s$, we can construct a feasible solution to the dual  problem \eqref{eq_dual} as  
		\begin{align*}
		\bm Q_i = \bm c_i \bm c_i^{\top}, \mu_i = 0, \forall i\in {S}, \bm Q_i = \bm 0_{r,r}, \mu_i = \bm c_i^{\top}\bm c_i, \forall i\in [n]\setminus S.
		\end{align*}
		Plugging the above solution into problem \eqref{eq_dual} and finding the minimum of objective values led by them,  problem \eqref{eq_dual} can be upper bounded by
		\begin{align} \label{eq_subprob}
		f(\bm{z})\leq 	\min_{S\subseteq [n], |S|\le s} \bigg\{ \bigg|\bigg|\sum_{i\in {S}}\bm c_i \bm c_i^{\top}\bigg|\bigg|_{(k)} + \sum_{i\in [n]\setminus S}|| \bm c_i||_2^2  z_i\bigg\}\leq \bigg|\bigg|\sum_{i\in \hat{S}}\bm c_i \bm c_i^{\top}\bigg|\bigg|_{(k)}: =f(\bm{z}),
		\end{align}
		where the equality can be achieved by the support $\hat{S}$ of $\bm z$ since
		\begin{align*} 
		\bigg|\bigg|\sum_{i\in \hat{S}}\bm c_i \bm c_i^{\top}\bigg|\bigg|_{(k)}  +\sum_{i\in [n]\setminus \hat{S}}|| \bm c_i||_2^2  z_i = \bigg|\bigg|\sum_{i\in \hat{S}}\bm c_i \bm c_i^{\top}\bigg|\bigg|_{(k)}.
		\end{align*}
		
		%Therefore, the dual problem \eqref{eq_dual} can reduce to the  hand minimization problem in \eqref{eq_subprob}
		%Now replacing the inner maximization problem in \eqref{eq_spca1} by its reduced dual problem in the left-hand side of inequality \eqref{eq_subprob}, 
		Therefore, SPCA \eqref{eq_spca1} can be formulated by
		\begin{align} \label{eq_spca2}
		\max_{\begin{subarray}{c}
			\bm z \in \{0,1\}^n, \\
			\sum_{i\in [n]} z_i \le s
			\end{subarray} } 
		\min_{S\subseteq [n], |S|\le s} \bigg\{ \bigg|\bigg|\sum_{i\in {S}}\bm c_i \bm c_i^{\top}\bigg|\bigg|_{(k)} + \sum_{i\in [n]\setminus S}|| \bm c_i||_2^2  {z}_i\bigg\} .
		\end{align}
		Introducing the variable $w$ to linearize the inner minimization above and plugging the identities that $||\bm A_{S, S}||_{(k)} = ||\sum_{i\in {S}}\bm c_i \bm c_i^{\top}||_{(k)}$ for any subset $S\subseteq [n]$ and $A_{ii} = \bm c_i^{\top} \bm c_i$ for any $i\in [n]$, we obtain the formulation \eqref{eq_spcabc}.
	\end{subequations} 
	\qed
\end{proof}

\subsection{Proof of Proposition~\ref{prop3}} \label{proof:prop3}
\prop*
\begin{proof}
	We show the equivalence of SSVD \eqref{eq_ssvdcom} and problem \eqref{eq_ssvdcom2} via one-to-one solution correspondence. First, for any feasible solution $(S_1, S_2)$ to SSVD, let us construct set $S\subseteq [m+n]$ as
	\begin{align*}
	S := \{i: i\in S_1\}\cup\{j+m: j\in S_2\}.
	\end{align*} 
	Then we have
	\begin{equation}\label{eq_ssvd_symm}
	\begin{aligned}
	||\overline{\bm A}_{S, S}||_{(k)} &= \bigg|\bigg| \begin{pmatrix}
	\bm 0 & \bm A_{S_1, S_2}\\
	\bm A_{S_1, S_2}^{\top} & \bm 0 
	\end{pmatrix} + \sigma_{1}(\bm A) (\bm I_{m+n})_{S, S} \bigg|\bigg|_{(k)} = \sum_{i\in [k]}(\sigma_{i}(\bm A_{S_1, S_2}) + \sigma_{1}(\bm A) ) \\
	&= ||\bm A_{S_1,S_2}||_{(k)} + k \sigma_{1}(\bm A),
	\end{aligned}
	\end{equation}
	where the first equality is from the definition of matrix $\overline{\bm A}$ and the second equality is due to Lemma~\ref{lem:aug}.
	
	Second, for any feasible subset $S$ to problem  \eqref{eq_ssvdcom2}, we construct two subsets $S_1\subseteq [m]$ and $S_2\subseteq [n]$  as 
	\begin{align*}
	S_1 =\{i: i\in[m]\cap S\}, S_2 =\{j-m: j\in[m+1, m+n]\cap S\},
	\end{align*}
	which is feasible to SSVD \eqref{eq_ssvdcom}. Similarly, according to \eqref{eq_ssvd_symm}, we have $||\bm A_{S_1,S_2}||_{(k)} = ||\overline{\bm A}_{S, S}||_{(k)}-  k \sigma_{1}(\bm A)$.
	
	This proves the equivalence of SSVD \eqref{eq_ssvdcom} and problem \eqref{eq_ssvdcom2}. \qed
\end{proof}

\end{appendices}

\end{document}